\documentclass[a4paper,12pt]{article}
\usepackage{amsmath,amsthm,amssymb,enumerate}
\usepackage{xcolor}
\usepackage{float}
\usepackage{multirow}
\usepackage{longtable}
\usepackage{diagbox}
\usepackage{soul}
\usepackage{cancel}
\usepackage{gensymb}
\usepackage{enumitem}
\usepackage[square,sort&compress,comma,numbers]{natbib}
\usepackage{bm}
\usepackage{hyperref}
\usepackage{subfig}
\usepackage{graphicx}
\usepackage[margin=1.7cm]{geometry}
\usepackage{tikz}
\usepackage{esint}
\usepackage{caption}
\usetikzlibrary{shapes,calc}
\usepackage{verbatim}
\usepackage{array}
\usepackage{bm}
\definecolor{maroon}{RGB}{144,0,32}
\newcolumntype{H}{>{\setbox0=\hbox\bgroup}c<{\egroup}@{}}

\usepackage{newtxtext,newtxmath}

\usepackage{multirow}
\usepackage{marginnote}
\chardef\bslash=`\\ 

\newcommand{\mathbi}[1]{{\boldsymbol #1}}
\def\N{\mathbb{N}}

\def\R{\mathbb{R}}

\def\err{\mathsf{err}}

\def\stab{\mathfrak{S}}

\def\dx{\,{\rm d}\x}

\def\<{\langle}
\def\>{\rangle}
\def\Poly{\mathbb{P}}

\def\div{{\rm div}}

\def\norm#1#2{\Vert#1\Vert_{#2}}

\newcounter{cst}

\def\bt{\begin{theorem}}
\def\et{\end{theorem}}
\def\bl{\begin{lemma}}
\def\el{\end{lemma}}
\def\bc{\begin{corollary}}
\def\ec{\end{corollary}}
\def\bd{\begin{definition}}
\def\ed{\end{definition}}
\def\br{\begin{remark}}
\def\er{\end{remark}}

\newcommand{\disc}{{\mathcal D}}

\def \hessian{\mathcal H}

\def \hd{\hessian_\disc}
\def \wdspace{H}

\def \symd{\R^{d\times d}}
\def \sym2{\R^{2\times 2}}
\def\cell{K}
\DeclareMathOperator*{\argmin}{argmin}

\newcommand{\mesh}{{\mathcal M}}

\newcommand{\edge}{{\sigma}}

\newcommand{\edges}{{\mathcal F}}

\newcommand{\edgesext}{{{\edges}_{\rm ext}}} 
\newcommand{\edgesint}{{{\edges}_{\rm int}}}

\newcommand{\x}{\mathbi{x}}

\newcommand{\vertices}{{\mathcal V}}

\newcommand{\be}{\begin{equation}}
\newcommand{\ee}{\end{equation}}

\renewcommand{\O}{\Omega}

\renewcommand{\d}{{\,\rm d}}

\newcommand{\ba}{\begin{array}{llll}   }
\newcommand{\bac}{\begin{array}{c}}
\newcommand{\bari}{\begin{array}{r}}
\newcommand{\ea}{\end{array}}

\newcommand{\NORM}[1]{{\left\vert\kern-0.25ex\left\vert\kern-0.25ex\left\vert #1 
    \right\vert\kern-0.25ex\right\vert\kern-0.25ex\right\vert}}

\newcommand{\tPsi}{\widetilde{\Psi}}

\newcommand{\dm}{\disc_m}  
\def\hat#1{\widehat{#1}}
\newcommand{\bsymb}[1]{{\boldsymbol #1}}
\newcommand{\bfX}{\bsymb{X}}
\newcommand{\bH}{\bsymb{H}}

\newcommand{\bfXd}{\bsymb{X}_{\disc,0}}
\newcommand{\bL}{\bsymb{L}}
\newcommand{\cL}{\mathcal{L}}
\newcommand{\cA}{\mathcal{A}}
\newcommand{\cB}{\mathcal{B}}
\newcommand{\fl}{\;\mbox{ for all } \;}
\usepackage{graphicx}
\usepackage{epstopdf}
\usepackage{pdfpages}
\usepackage{subfig,epsfig}
\usepackage{textcomp}
\usepackage{color}
\usepackage{tikz}
\usepackage{caption}

\usepackage{cancel}

\usetikzlibrary{arrows}
\usetikzlibrary{shapes,calc}

\usepackage[square,sort&compress,comma,numbers]{natbib}

\def\assum#1{{\rm\textbf{(A#1)}}}
\def\propty#1{{\rm\textbf{(P#1)}}}

\newtheorem{theorem}{Theorem}[section]
\newtheorem{remark}[theorem]{Remark}

\newtheorem{lemma}[theorem]{Lemma} 
\newtheorem{definition}[theorem]{Definition}

\newtheorem{corollary}[theorem]{Corollary}

\numberwithin{equation}{section}

\def\XXint#1#2#3{{\setbox0=\hbox{$#1{#2#3}{\int}$ }
\vcenter{\hbox{$#2#3$ }}\kern-.6\wd0}}

\usepackage[normalem]{ulem}
\newcounter{corr}
\definecolor{violet}{rgb}{0.580,0.,0.827}
\newcommand{\corr}[3]{\typeout{Warning : a correction remains in page
\thepage}
				\stepcounter{corr}        
				{\color{blue}\ifmmode\text{\,\sout{\ensuremath{#1}}\,}\else\sout{#1}\fi}
       {\color{red}#2}
       {\color{violet} #3}}

\newcounter{cexp}
\catcode`\@=11
\def\terml#1{T_{\refstepcounter{cexp}\@bsphack
\protected@write\@auxout{}%
           {\string\newlabel{#1}{{\thecexp}{\thepage}}}\thecexp}}
\catcode`\@=12

\begin{document}
\title{Convergence Analysis of the Hessian Discretisation Method for Fourth Order Semi-linear Elliptic Equations with General Source}
\author{Devika Shylaja \footnote{Department of Mathematics, Birla Institute of Technology and Science, Pilani, K K Birla Goa Campus, Zuarinagar, Sancoale, Goa 403726, India. {\tt devikas@goa.bits-pilani.ac.in}}}
\maketitle
\begin{abstract}
This paper presents a convergence analysis for the Hessian Discretisation Method (HDM) applied to fourth-order semilinear elliptic equations involving a trilinear nonlinearity and general source, based on two complementary approaches. The HDM serves as a unified framework for the convergence analysis of various numerical schemes, including conforming and nonconforming finite element methods (ncFEMs) and gradient recovery (GR) based methods. Error estimates for the Adini ncFEM and GR methods are derived for the first time, which provide an explicit order of convergence. The analysis relies on four key HDM properties along with a suitable companion operator to establish convergence results. Moreover, a convergence analysis is developed within the HDM framework, which does not require additional regularity assumptions on the exact solution or the assumption that the exact solution is regular. The paper further discusses two significant applications: the Navier--Stokes equations in stream function--vorticity formulation and the von K\'{a}rm\'{a}n equations for plate bending. Numerical experiments are provided to demonstrate the performance of the GR method, Morley, and Adini ncFEMs. 
\end{abstract}

\medskip

\noindent{\bf Keywords:} {Hessian discretisation, Navier Stokes equations, von K\'{a}rm\'{a}n equations, plate bending, non-linear equations, finite element,  Adini, gradient recovery, error estimates, general source}

\section{Introduction}
Fourth-order elliptic partial differential equations, both linear and nonlinear, serve as fundamental components in various physical models, most notably in the structural analysis of plates, beams, and shells \cite{ciarlet1978finite,ciarlet_plate,NS_FE79}. The present work investigates convergence analysis for numerical approximations of fourth-order semilinear problems characterized by trilinear nonlinearity and a general source term with clamped boundary conditions. This study utilizes the Hessian Discretisation Method (HDM) \cite{HDM_linear, DS_HDM, HDM_nonlinear, DS_HDMcontrol}, a unified framework designed for such fourth-order problems.

\medskip

The HDM approach is advantageous because it offers a centralized platform for evaluating a various numerical methods within a single framework. This includes conforming finite element methods (FEMs), Morley and Adini non conforming finite element methods (ncFEMs), and methods based on gradient recovery (GR) operators \cite{ciarlet1978finite, BL_FEM, BL_MFEM}. This framework operates on a Hessian discretisation quadruple, which replaces continuous functions and their derivatives (gradient and Hessian) with discrete approximations. By verifying four core properties - coercivity, consistency, limit-conformity, and compactness—one can systematically prove the convergence of different schemes across various linear \cite{HDM_linear} as well as semilinear \cite{HDM_nonlinear} models for data in $L^2(\Omega)$. This paper extends the work presented in \cite{HDM_nonlinear}.


\medskip

%

The specific abstract problem addressed here has significant practical implications, particularly for the stream function–vorticity formulation in 2D incompressible Navier–Stokes equations \cite{BRR,NS_FE79} and the von K\'arm\'an equations for elastic plate bending \cite{ciarlet_plate}. The stream-function formulation of the incompressible Navier–Stokes equations eliminates the pressure variable and automatically enforces mass conservation. However, this reformulation converts the original second-order system into a fourth-order nonlinear equation for the scalar stream function. Various numerical approaches have been developed to approximate this formulation; for instance,  $C^0$ FEM \cite{Cayco1986}, $C^1$ conforming FEMs \cite{Foster2013}, a $C^0$ weak Galerkin FEM \cite{Zhang2020_C0WG_NS}, and virtual element methods \cite{Adak2021,Zhang2024_NS}. Previous numerical studies on von Kármán equations have utilized several methods, including conforming FEMs \cite{Brezzi,ng1}, Morley ncFEM \cite{ng2}, mixed FEMs \cite{BRR,TM76}, $C^0$ interior penalty methods \cite{SBMNARLS}, discontinuous Galerkin methods \cite{CCGMNN18}, and virtual element methods \cite{Shylaja2024MorleyVEM, Lovadina2021VEMVK}. Recently, a unified a priori analysis of four second-order finite element methods, including Morley ncFEM, for rough data has been developed for a fourth-order quadratic semilinear problem \cite{Carstensen2023Unified}, which, in particular, applies to the two applications mentioned above. Note that the weak form of the von K\'arm\'an equations considered there differs from the formulation studied in this paper. Furthermore, in \cite{Carstensen2023Unified}, convergence is established by assuming additional regularity of the exact solution and the well-posedness of the linearized problem around the exact solution. 

\medskip

In this paper, we employ two complementary approaches for the convergence analysis.  The first approach extends the compactness-based framework established in \cite{HDM_nonlinear}. While prior research in \cite{HDM_nonlinear} established convergence of HDM via compactness techniques for $L^2(\Omega)$ data without assuming extra regularity of the exact solution or the assumption that the exact solution is regular, this paper expands that scope for rough data. In this context, the solution to the continuous weak formulation is identified as the limit of a sequence of discrete solutions, which also proves the existence of a continuous solution.  The second approach provides explicit orders of convergence through error estimates on convex domains. This analysis assumes that the exact solution possesses $H^3$ regularity and that the linearized problem around the exact solution is well-posed. These two approaches complement each other. To establish these two approaches, we utilize a smoothing (companion) operator alongside the four fundamental properties of the HDM. To the best of our knowledge, the convergence of the HDM framework using the first approach has not previously been applied to rough data. Furthermore, error analysis for Adini ncFEM and GR-based methods for nonlinear fourth-order problems, that provides explicit orders of convergence, represents a significant and novel contribution of this work.

\medskip

The contributions of this article are the following:
\begin{itemize}
	\item  { A unified framework} provided by HDM for fourth order semi-linear elliptic equations with a trilinear nonlinearity and a general source term that applies to several numerical methods such as conforming FEMs, Adini and Morley ncFEMs, and GR methods.

\item Wellposedness of discrete solution for general data in $H^{-2}(\Omega)$ in an abstract framework, assuming the existence of a $C^1$-conforming companion operator and a specific property of HDM.

		\item {Convergence analysis} by compactness techniques for rough data in $H^{-1}(\Omega)$ in HDM framework without any extra-regularity assumption on the exact solution that employs only four properties along with a suitable companion operator.

		\item {Error estimates} in an abstract setting for rough data in $H^{-1}(\Omega)$ on convex domains that establish orders of convergence in $L^2$, $H^1$, and $H^2$ like norms through three properties of HDM and a companion operator.

\item Convergence of the Newton's method for the discrete problem.

		\item {Applications} to the stream function vorticity formulation of 2D Navier--Stokes equation and the von K\'{a}rm\'{a}n equations.
	\item Numerical results of Navier--Stokes equation and von K\'{a}rm\'{a}n equations using GR method, Morley ncFEM and Adini ncFEM to confirm the theoretical estimates.
\end{itemize}

\medskip

The paper is organised as follows. Section~\ref{sec.modelproblemHDM} outlines the abstract weak formulation for fourth-order semilinear elliptic equations and the HDM framework for rough data, highlighting key results for GR method and Adini ncFEM alongside the four HDM properties and the companion operator. Section~\ref{sec.convergence} establishes the convergence of the HDM based on compactness techniques for rough data, while Section~\ref{sec:error} derives explicit error estimates to establish the order of convergence.  Section~\ref{sec:HDMegandapplications} explores various numerical methods that align with this framework, applying it to the 2D incompressible Navier-Stokes and von K\'{a}rm\'{a}n equations. Section~\ref{sec:numericalresults} presents numerical results that support the theoretical analysis.  Finally, some concluding remarks are given in Section~\ref{sec:conclusion}.

\medskip

\textbf{Notations}. Let $\O \subset \R^d$ $(d\ge1)$ be a bounded domain with boundary $\partial \Omega$ and let the outer normal be denoted by $n$. For brevity, we follow the Einstein summation convention that implies summation over a set of indexed terms in a product of vectors, tensors or differential operators unless otherwise stated. The scalar product on $\symd$ is defined by $\xi:\phi=\xi_{ij}\phi_{ij}$. For a function $\xi : \O \rightarrow \symd$, denoting the Hessian operator by $\hessian$, set $\hessian : \xi = \partial _{ij}\xi_{ij}$. For $a,b\in\R^d$, let $a \otimes b$ denotes the 2-tensor with coefficients $a_ib_j$. The standard $L^2$ inner product and norm (applied on $L^2(\O)$, $L^2(\O;\R^d)$, and $L^2(\O;\symd)$) are denoted by $(\cdot,\cdot)$ and $\|{\cdot}\|$. The semi-norm and norm in $H^m(\Omega)$ (resp. $W^{m,p} (\Omega)$,  $1 \le p \le \infty$), are denoted by $|\cdot|_{m}$ and $\|\cdot\|_{m}$ (resp.  $|\cdot|_{m,p}$ and $\|\cdot\|_{m,p}$). Let $k\ge 1$ be an integer and, for $E$ a vector space, set $\bsymb{E}=E^k$. For simplicity of notation, we use the same notation to denote the norms in $\bsymb{E}$ and $E$. The duality pairing between $\bsymb{H}^{-k}(\O)$ and $\bsymb{H}^{k}_0(\O)$ is denoted by $\langle \cdot,\cdot \rangle_{-k,k}$. The norm on $\bsymb{H}^{-k}(\Omega)$ is denoted by $\|\cdot\|_{-k}$.

\section{Model Problem and Hessian Discretisation}\label{sec.modelproblemHDM}

This section presents the abstract setting of the weak formulation of semi-linear fourth-order elliptic problems with a trilinear nonlinearity and a general source term. This is followed by a discretization based on the Hessian discretisation method. The main results of this article are stated for the Adini ncFEM and GR method for ease of exposition. The properties that ensure the convergence analysis are discussed at the end of this section.

\medskip

\subsection{Abstract Problem}

The continuous abstract problem seeks $\Psi \in \bfX$ with $X:=H^2_0(\O)$ such that
\be \label{abstract_weak}
\mathcal{A}(\hessian \Psi,\hessian \Phi)+\mathcal{B}(\hessian \Psi, \nabla \Psi, \nabla \Phi)=\mathcal{L}(\Phi)  \fl \Phi \in \bfX,
\ee
where $\hessian \Psi$ and $\nabla \Psi$ are to be understood component-wise, that is: for $\Psi=(\psi_1,\cdots,\psi_k)$, $\hessian \Psi=(\hessian \psi_1,\cdots,\hessian \psi_k)$ and $\nabla \Psi=(\nabla \psi_1,\cdots,\nabla \psi_k)$. Assume that 
\begin{itemize}
	\item[\assum{1}] $\mathcal{A}(\cdot,\cdot)$ is a  continuous and coercive bilinear form on $\bsymb{L}^2(\O;\symd) \times \bsymb{L}^2(\O;\symd)$.
	\item[\assum{2}] $\mathcal{B}(\cdot,\cdot,\cdot)$ is a continuous trilinear form on $\bsymb{L}^2(\O;\symd) \times \bsymb{L}^4(\O;\R^d) \times \bsymb{L}^4(\O;\R^d)$.
	\item[\assum{3}] $\mathcal{B}(\Xi,\Theta,\Theta)=0$ for all $ \Xi \in \bsymb{L}^2(\O;\symd)$ and $\Theta \in \bsymb{L}^4(\O;\R^d)$.
	\item[\assum{4}] $\mathcal{L}(\cdot)$ is a continuous linear functional on $\bsymb{H}^{2}_0(\Omega)$, that is, $\mathcal{L} \in \bsymb{H}^{-2}(\Omega)$. 
\end{itemize}
The abstract formulation \eqref{abstract_weak} covers the stream function vorticity formulation of the incompressible 2D Navier--Stokes problem \cite{Lions_NS,BRR} for $k=1$ as well as the von K\'{a}rm\'{a}n equations \cite{ciarlet_plate} for $k=2$, see Section~\ref{sec:applications} for more details.

\begin{remark}[Comparison with \cite{HDM_nonlinear}]
This work extends the results in \cite{HDM_nonlinear} in two significant ways. First, we consider more general data, whereas \cite{HDM_nonlinear} was restricted to data in $\bsymb{L}^2(\Omega)$. Second, beyond providing a compactness-based convergence analysis for rough data, we establish error estimates that determine the order of convergence within the HDM framework - an aspect not addressed in \cite{HDM_nonlinear}.
\end{remark}

\begin{remark}[Assumption $\assum{4}$]
While assumptions $\assum{1}$–$\assum{4}$ ensure the well-posedness of the discrete problem within the HDM framework (Theorem~\ref{thm:wellposedness}), the compactness-based convergence analysis requires $\assum{1}$–$\assum{3}$, with $\assum{4}$ limited to $\bsymb{H}^{-1}(\Omega)$ (Theorem~\ref{convergence_abstract_non-linear}). This same restricted version of $\assum{4}$ is also a prerequisite for establishing the order of convergence in the error estimates (Theorem~\ref{thm.err}).
\end{remark}

\subsection{Hessian Discretisation Method} \label{sec.HDM}
This section presents the HDM for fourth-order nonlinear elliptic equations with rough data. The HDM framework originally developed in \cite{HDM_nonlinear} for $\bsymb{L}^2$ data is thus extended to accommodate this more general case. 
	
	\begin{definition}[Hessian discretisation]\cite[Definition 3.1]{HDM_nonlinear}\label{HD}~
	A Hessian discretisation (HD) for fourth order non-linear elliptic equations with clamped boundary conditions is a quadruplet $\disc=(X_{\disc,0},\Pi_\disc,\nabla_\disc,\hd)$ such that
	\begin{itemize}
		\item  $X_{\disc,0}$ is a finite dimensional real vector space, 
		\item the linear mapping $\Pi_\disc:X_{\disc,0}  \rightarrow L^2(\O)$ gives a reconstructed discrete function in $L^2(\O)$ from vectors in $X_{\disc,0},$
		\item the linear mapping $\nabla_\disc:X_{\disc,0}  \rightarrow L^4(\O;\R^d)$ gives a reconstructed discrete gradient in $L^4(\O;\R^d)$ from vectors in  $X_{\disc,0}$,
		\item the linear mapping $\hd:X_{\disc,0}  \rightarrow L^2(\O;\R^{d \times d})$ gives a reconstructed discrete version of Hessian in $L^2(\O;\R^{d \times d})$ from $X_{\disc,0}$. The operator $\hd$ is such that $\norm{{\cdot}}{\disc}=:\norm{\hd \cdot}{}$ is a norm on $X_{\disc,0}.$
	\end{itemize}
\end{definition}
To accommodate general data, the existence of a $C^1$-conforming {\it companion/smoothing} operator is assumed. This operator maps elements from the discrete space $X_{\disc,0}$ to $X$ and satisfies the conservation properties stated below.

\smallskip

\assum{5} (Companion operator) {\it There exists a linear map $E_\disc :X_{\disc,0} \rightarrow X$ called the companion operator. Let
\begin{subequations}\label{sup.ED.est}
\begin{align}
	&\delta(E_\disc):=\sup_{\psi_\disc \in X_{\disc,0}\setminus{\{0\}} }\frac{\norm{\Pi_\disc \psi_\disc-E_\disc \psi_\disc}{}}{\norm{\psi_\disc}{\disc}}\,,\label{sup.ED.est.delta}\\
	&\omega(E_\disc):=\sup_{\psi_\disc  \in X_{\disc,0}\setminus{\{0\}}}\frac{\norm{\nabla_\disc \psi_\disc-\nabla E_\disc \psi_\disc}{}}{\norm{ \psi_\disc}{\disc}}\,,\label{sup.ED.est.omega1}\\
&\Gamma(E_\disc):=\sup_{\psi_\disc \in X_{\disc,0}\setminus{\{0\}}}\frac{\norm{\hessian E_\disc \psi_\disc}{}}{\norm{ \psi_\disc}{\disc}} \label{sup.ED.est.Gamma}.
\end{align}
\end{subequations}
}
The companion operators associated with the sequence of HDs $(\disc_m)_{m\in\mathbb{N}}$ are expected to yield $\delta(E_{\disc_m}) \to 0$ and $\omega(E_{\disc_m}) \to 0$ as $m \to \infty$, with $\Gamma(E_{\disc_m})$ remaining bounded for all $m$. A more detailed discussion of this operator appears in Section~\ref{sec:companion}.

\medskip

Let $\disc=(X_{\disc,0},\Pi_\disc,\nabla_\disc,\hd)$ be a HD in the sense of Definition \ref{HD} and let $E_\disc$ be a companion operator. The associated numerical scheme, referred to as the Hessian scheme (HS), for \eqref{abstract_weak} seeks $\Psi_\disc \in \bfXd:=X_{\disc,0}^k$ such that
\begin{align}
&\mathcal{A}(\hd \Psi_\disc,\hd \Phi_\disc)+\mathcal{B}(\hd \Psi_\disc, \nabla_\disc \Psi_\disc, \nabla_\disc \Phi_\disc)=\mathcal{L}(E_\disc\Phi_\disc) \fl  \Phi_\disc \in \bfXd,\label{abstract_HS}
\end{align}
where $\hd \Phi_\disc$, $\nabla_\disc \Phi_\disc$ and $E_\disc \Phi_\disc$ act component-wise in the sense that if $\Phi_\disc=(\phi_{\disc,1}, \cdots,\phi_{\disc,k})$ and $F_\disc\in\{\Pi_\disc,\nabla_\disc,\hd, E_\disc\}$, then $F_\disc \Phi_\disc=(F_\disc\phi_{\disc,1},\cdots,F_\disc\phi_{\disc,k})$.
\medskip

The right-hand side of \eqref{abstract_HS} incorporates $E_\disc$, which ensures that \eqref{abstract_HS} is well-defined even when the approximation space is not a subspace of $\bfX$. This is particularly relevant for methods like the Morley ncFEM.

\begin{remark}[Simplification for $\bsymb{L}^2$ data]\label{rem.L2}
When the data is in $\bsymb{L}^2(\Omega)$, the right-hand side of \eqref{abstract_HS} is replaced by $\mathcal{L}(\Pi_\disc \Phi_\disc)$. In this specific case, the use of $E_\disc$ is not required. Consequently, the convergence analysis via the compactness argument remains consistent with the results in \cite{HDM_nonlinear}.
\end{remark}


\subsection*{Main results} 

The following result states the 
\begin{itemize}
\item[$(i)$] wellposedness of the discrete problem for data in $\bH^{-2}(\Omega)$
\item[$(ii)$] convergence by compactness arguments that does not require any smoothness of the solution for data in $\bH^{-1}(\Omega)$
\item[$(iii)$] convergence under the assumption that the linearized problem around the exact solution is well-posed (regular solution) that provides order of convergence for data in $\bH^{-1}(\Omega)$.  
\end{itemize}
for the Adini ncFEM and GR method. The results are proved in Theorems \ref{thm:wellposedness}, \ref{convergence_abstract_non-linear}, and \ref{thm.err} in the unified HDM framework that in particular covers conforming, Morley and Adini ncFEMs, and GR methods. Furthermore, this framework applies to two specific cases: stream function vorticity formulation of the incompressible 2D Navier--Stokes problem and von K\'{a}rm\'{a}n equations. Let the assumptions $\assum{1}-\assum{3}$ hold.

\medskip

\textbf{(i) Wellposedness of the discrete problem:} Under assumption $\assum{4}$, there exists at least one solution $\Psi_\disc \in \bfXd$ to \eqref{abstract_HS}. Furthermore, the solution is unique provided the data is sufficiently small.
\medskip

\textbf{(ii) Convergence by compactness:} Let assumption $\assum{4}$ be restricted to $\bsymb{H}^{-1}(\Omega)$. For each $m \in \mathbb{N}$, there exists at least one solution $\Psi_{\disc_m}\in \bfX_{\disc_m,0}$ to \eqref{abstract_HS} on $\disc_m$. As $m \rightarrow \infty$, there exists a subsequence of $(\disc_m)_{m \in \mathbb{N}}$ (denoted using the same notation) and a solution $\Psi \in \bfX$ to the abstract problem \eqref{abstract_weak} such that  the following convergence hold:
\begin{itemize}
\item $\Pi_{\disc_m}\Psi_{\disc_m} \rightarrow \Psi$ in $\bsymb{L}^2(\Omega)$,
\item $\nabla_{\disc_m} \Psi_{\disc_m} \rightarrow \nabla \Psi$ in $\bsymb{L}^4(\Omega;\mathbb{R}^d)$,
\item $\hessian_{\disc_m} \Psi_{\disc_m} \rightarrow \hessian \Psi$ in $\bsymb{L}^2(\Omega; \mathbb{R}^{d \times d})$. 
\end{itemize}
Moreover, the uniqueness of the discrete solution holds for sufficiently small data.

\medskip

The HD $\disc_m$ is usually associated with a mesh $\mesh_{h_m}$ whose size is denoted by $h_m$.  As $m \to \infty$, the mesh is refined and thus $h_m \to 0$. 

\medskip

$\textbf{(iii) Convergence via error estimates:}$ Let $\Omega$ be a convex domain and let $\mathcal{L}$ in $\assum{4}$ be restricted to $\bsymb{H}^{-1}(\Omega)$. Suppose $\Psi \in \bsymb{H}^3(\Omega) \cap \bfX$ is a regular solution to \eqref{abstract_weak}. Let $P_\disc: \bfX \to  \bfX_{\disc,0}$ be an interpolant operator. For a sufficiently small discretization parameter $h$, there exists a discrete solution $\Psi_\disc$ to \eqref{abstract_HS} that satisfies  $\norm{ \Psi_\disc-P_\disc \Psi}{\disc}\lesssim h$ as well as the following error estimates:
$$\|\Pi_\disc \Psi_\disc - \Psi\|\lesssim h,\,\| \nabla_\disc \Psi_\disc -\nabla \Psi\| \lesssim h,\quad \mbox{ and }\quad \| \hessian_{\disc}\Psi_\disc - \hessian\Psi\|\lesssim h.
$$
Moreover, local uniqueness of the solution is guaranteed for a sufficiently small choice of $h$.
\subsubsection{Properties of HDM}\label{sec.properties}
This section deals with the four properties associated with an HD in the sense of Definition \ref{HD}, which are critical for the convergence analysis of a HS.

\medskip

The first quantity is a constant, $C_\disc$, that ensures discrete Poincar\'e inequalities. It is defined by 
\be\label{def.CD}
C_\disc := \max_{w_\disc\in X_{\disc,0}\setminus\{0\}} \left(\max\left\{\frac{\norm{\Pi_\disc w_\disc}{}}{\norm{w_\disc}{\disc}},
\frac{\norm{\nabla_\disc w_\disc}{0,4}}{\norm{w_\disc}{\disc}}\right\}\right).
\ee

The second quantity is the interpolation error $S_\disc$ defined by: for all $\varphi\in H^2_0(\O)$,
\be\label{def.SD}
\begin{aligned}
	&S_\disc(\varphi):=\min_{w_\disc\in X_{\disc,0}}\Big(\norm{\Pi_\disc w_\disc-\varphi}{}
	+\norm{\nabla_\disc w_\disc-\nabla\varphi}{0,4}	+\norm{\hd w_\disc-\hessian \varphi}{}\Big).
\end{aligned}
\ee
To define the limit-conformity measure for the HS, introduce  $$\wdspace(\O):=\left\{\xi\in L^2(\O;\R^{d \times d})\,;\,\hessian:\xi \in L^2(\O) \right\}$$ and $$ H_{\rm{div}}({\O}):=\left\{\phi \in L^2(\O;\R^d):\, \mbox{div}\phi \in L^2(\O)\right\}.$$
 For all $\xi \in \wdspace(\O)$ and $\phi \in H_{\rm{div}}({\O})$, set
\be\label{def.WD}
\begin{aligned}
	&W_\disc(\xi):=\max_{w_\disc\in X_{\disc,0}\backslash\{0\}}
	\frac{1}{\norm{w_\disc}{\disc}}\Big|\int_\O \Big((\hessian:\xi)\Pi_\disc w_\disc - \xi:\hd w_\disc \Big)\d\x \Big|, 
\end{aligned}
\ee
\be\label{def.WD_gradient}
\begin{aligned}
	& \hat{W}_\disc(\phi):=\max_{w_\disc\in X_{\disc,0}\backslash\{0\}}
	\frac{1}{\norm{w_\disc}{\disc}}\Big|\int_\O \Big(\nabla_\disc w_\disc \cdot \phi + \Pi_\disc w_\disc \mbox{ div} \phi \Big)\d\x \Big|. \end{aligned}
\ee
The limit-conformity measures, $\hat{W}_{\disc}$ and $W_{\disc}$, quantify the defects of the discrete (single and double) integration-by-parts formulas. These measures evaluate the defect of conformity between the reconstructed function and its corresponding reconstructed gradient and Hessian.

\begin{definition}[Coercivity, consistency, limit-conformity and compactness]\label{def:coercive} \cite[Definition 4.1]{HDM_nonlinear}
	Let $(\disc_m)_{m \in \N}$ be a sequence of HDs in the sense of Definition \ref{HD}. We say that
	\begin{itemize}
		\item[$(i)$] $(\disc_m)_{m\in\N}$ is \emph{coercive} if there exists $C_P \in \R^+$ such that $C_{\disc_{m}} \leq C_P$ for all $m \in \N$.
		\item[$(ii)$] $(\disc_m)_{m\in\N}$ is \emph{consistent}, if for all $ \varphi\in H^2_0(\O)$,
		\be \nonumber 
		\lim_{m \rightarrow \infty} S_{\disc_m}(\varphi)=0.
		\ee
		\item[$(iii)$] $(\disc_m)_{m\in\N}$ is \emph{limit-conforming}, if for all $ \xi \in H(\O)$ and for all $\phi \in  H_{\rm{div}}({\O})$,
		\be \nonumber
			\lim_{m \rightarrow \infty} \big(W_{\disc_m}(\xi)+\hat{W}_{\disc_m}(\phi)\big)=0.
		\ee
		\item[$(iv)$] $(\disc_m)_{m \in \N}$ is \emph{compact} if for any sequence $(u_m)_{m \in \N}$ such that $u_m \in X_{\disc_m,0}$ and $(\norm{ u_m}{{\disc_m}})_{m \in \N}$ is bounded, the sequence $(\Pi_{\disc_m}u_m)_{m \in \N}$ is relatively compact in $L^2(\O)$, and the sequence $(\nabla_{\disc_m}u_m)_{m \in \N}$ is relatively compact in $L^4(\O;\R^d)$.
	\end{itemize}
\end{definition}
Numerical methods that satisfy the four properties outlined in Definition \ref{def:coercive} include conforming FEMs, Morley and Adini ncFEMs, and GR methods. The specific properties for each of these approaches are detailed in Section \ref{sec:HDMeg}.

\subsubsection{Companion Operator}\label{sec:companion}

In addition to the standard properties discussed in Section~\ref{sec.properties}, the companion operator $E_\disc$ (defined in $\assum{5}$) is necessary to ensure the convergence of the HDM. It will be expected that, along the considered sequence $(\disc_m)_{m\in\N}$ of HDs, the corresponding companion operators will be such that $\delta(E_{\disc_m})\to 0$, $\omega(E_{\disc_m}) \to 0$ as $m \to \infty$, and $\Gamma(E_{\disc_m})$ remains bounded for all $m$. Specifically, the corresponding companion operators must satisfy the following conditions based on the level of analysis.
\begin{itemize}
\item  {\it Well-posedness for $\bH^{-2}(\Omega)$ data: }To ensure the discrete problem is well-posed for data in $\bH^{-2}(\Omega)$, it is sufficient for the companion operator $E_\disc$ to have a bounded $\Gamma(E_\disc)$.

\item {\it Convergence via compactness for $\bH^{-1}(\Omega)$ data: }The establishment of convergence through compactness techniques for $\bH^{-1}(\Omega)$ data requires that, for the sequence $(\disc_m)_{m\in\mathbb{N}}$ of HDs, the corresponding companion operators satisfy $\omega(E_{\disc_m})\to 0$ as $m \to \infty$.

\item {\it Error estimates for $\bH^{-1}(\Omega)$ data: }In contrast, the derivation of error estimates necessitates a broader set of conditions: $\delta(E_{\disc_m})\to 0$ and $\omega(E_{\disc_m}) \to 0$, alongside the boundedness of $\Gamma(E_{\disc_m})$.
\end{itemize}



 For conforming FEMs, $E_\disc$ is nothing but the identity operator and hence, 
\begin{equation}
\delta (E_\disc)=\omega(E_\disc)=0, \mbox{ and }\Gamma(E_\disc)=1.\nonumber
\end{equation}
 An explicit companion operator for the Morley ncFEM \cite{Morley_plate}  and Adini ncFEM \cite{Brenner_ncfem} satisfies
\begin{equation}
\delta(E_\disc)=h^2,\,\omega(E_\disc)=h, \mbox{ and }\Gamma(E_\disc)\le C,\nonumber
\end{equation}
 where $C$ is a positive constant independent of the mesh-size $h$. Recently, a companion operator for the GR method has been constructed in \cite{DS_HDMcontrol} with 
\begin{equation}
\delta(E_\disc)=h,\,\omega(E_\disc)=0, \mbox{ and }\Gamma(E_\disc)\le C.\nonumber
\end{equation}


\section{Convergence by compactness}\label{sec.convergence}

This section establishes the convergence of the HS, provided the underlying sequences of HDs satisfy the properties in Definition \ref{def:coercive} and the companion operator $E_\disc$ in $\assum{5}$. This convergence is proved without any extra regularity assumption on the exact solution, or the assumption that the linearized problem around this solution is well-posed.
\medskip

The following two lemmas provide the auxiliary results required to establish our main convergence theorem.

\begin{lemma}[Regularity of the limit] \label{regularity_nonlinear}
	\cite[Lemma 5.1]{HDM_nonlinear}Let $(\disc_m)_{m \in \N}$ be a  coercive and limit-conforming sequen\-ce of HDs in the sense of Definition \ref{def:coercive}$(i)$ and $(iii)$. Let $u_m\in X_{\disc_m,0}$ be such that $\norm{u_m}{\disc_m}$ remains bounded. Then, there exists a subsequence of $(\disc_m,u_m)_{m \in \N}$ (denoted using the same notation)  and $u \in H^2_0(\O)$ such that $\Pi_{\disc_m}u_m$ converges weakly to $u$ in $L^2(\O)$, $\nabla_{\disc_m}u_m$ converges weakly to $\nabla u$ in $L^4(\O;\R^d)$, and $\hessian_{\disc_m}u_m$ converges weakly to $\hessian u$ in $L^2(\O; \R^{d \times d})$.
\end{lemma}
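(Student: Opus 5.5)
The plan is a weak-compactness argument followed by identification of the limits through the limit-conformity measures $W_{\disc_m}$ and $\hat W_{\disc_m}$.

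\textbf{Step 1: weak limits.} By coercivity, $\norm{\Pi_{\disc_m}u_m}{}\le C_P\norm{u_m}{\disc_m}$ and $\norm{\nabla_{\disc_m}u_m}{0,4}\le C_P\norm{u_m}{\disc_m}$. Also $\norm{\hessian_{\disc_m}u_m}{}=\norm{u_m}{\disc_m}$ by definition. All three sequences are therefore bounded in the reflexive spaces $L^2(\O)$, $L^4(\O;\R^d)$ and $L^2(\O;\R^{d\times d})$. Extracting successively, there is a subsequence and limits $u\in L^2(\O)$, $G\in L^4(\O;\R^d)$, $\mathcal{K}\in L^2(\O;\R^{d\times d})$ such that
\[
\Pi_{\disc_m}u_m\rightharpoonup u,\qquad \nabla_{\disc_m}u_m\rightharpoonup G,\qquad \hessian_{\disc_m}u_m\rightharpoonup \mathcal{K}.
\]

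\textbf{Step 2: $G=\nabla u$, with the boundary condition.} Extend $u$ and $G$ by zero outside $\O$ and call the extensions $\tilde u,\tilde G$. Fix $\phi\in C_c^\infty(\R^d;\R^d)$. Its restriction to $\O$ lies in $H_{\rm div}(\O)$, so the definition of $\hat W_{\disc_m}$ gives
\[
\Big|\int_\O \big(\nabla_{\disc_m}u_m\cdot\phi+\Pi_{\disc_m}u_m\,\div\phi\big)\d\x\Big|\le \hat W_{\disc_m}(\phi)\,\norm{u_m}{\disc_m}.
\]
The right-hand side tends to $0$ by limit-conformity and the bound on $\norm{u_m}{\disc_m}$. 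Passing to the weak limit on the left yields
\[
\int_{\R^d}\big(\tilde G\cdot\phi+\tilde u\,\div\phi\big)\d\x=0 .
\]
Hence $\nabla\tilde u=\tilde G$ in $\R^d$, so $\tilde u\in W^{1,4}(\R^d)\subset H^1(\R^d)$ ($\O$ bounded). Since $\tilde u$ vanishes outside $\O$, we get $u\in H^1_0(\O)$ with $\nabla u=G$. Testing against all of $C_c^\infty(\R^d)$, not just $C_c^\infty(\O)$, is what captures the boundary condition; it requires only the standard characterisation of $H^1_0$ via zero extension on a Lipschitz domain.

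\textbf{Step 3: $\mathcal{K}=\hessian u$, with the second boundary condition.} Fix $\xi\in C_c^\infty(\R^d;\R^{d\times d})$, so that $\xi|_\O\in \wdspace(\O)$. By the same argument with $W_{\disc_m}$,
\[
\int_{\R^d}\big((\hessian:\xi)\,\tilde u-\xi:\tilde{\mathcal{K}}\big)\d\x=0 ,
\]
where $\tilde{\mathcal{K}}$ is the zero extension of $\mathcal{K}$. Thus $\hessian\tilde u=\tilde{\mathcal{K}}$ in the distributional sense on $\R^d$. We already know $\tilde u\in H^1(\R^d)$ with $\nabla\tilde u=\tilde G$. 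Hence each distributional derivative $\partial_j\tilde G_i=\tilde{\mathcal K}_{ij}$ lies in $L^2(\R^d)$, so $\tilde G\in H^1(\R^d;\R^d)$ and therefore $\tilde u\in H^2(\R^d)$. Both $\tilde u$ and $\nabla\tilde u$ vanish outside $\O$, which gives $u\in H^2_0(\O)$ with $\hessian u=\mathcal{K}$.

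\textbf{Main obstacle.} The delicate point is justifying that zero extension with $\tilde u\in H^2(\R^d)$ implies $u\in H^2_0(\O)$. This is standard for Lipschitz (or merely continuous-boundary) domains, and I would cite it rather than reprove it. Everything else is a direct passage to the limit using the limit-conformity bounds.
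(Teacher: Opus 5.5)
Your proof is correct. It follows essentially the same argument as the one the paper relies on (the paper gives no proof and quotes \cite[Lemma 5.1]{HDM_nonlinear}): coercivity and reflexivity give weak limits, then after extending by zero outside $\Omega$ you pass to the limit in the defects $\hat W_{\disc_m}$ and $W_{\disc_m}$, tested against $C_c^\infty(\R^d)$ fields. This identifies the limits as $\nabla u$ and $\hessian u$ on all of $\R^d$, and $u\in H^2_0(\Omega)$ follows from the zero-extension characterisation, which is valid for the Lipschitz/polygonal domains considered here.
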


\begin{lemma}\label{abstract_trilinear_cv}
Let $\Xi_m \rightarrow \Xi$ weakly in $\bsymb{L}^2(\O;\R^{d\times d}), \,\Theta_m \rightarrow \Theta$ in $\bsymb{L}^4(\O;\R^d)$  and $X_m \rightarrow X$ in $\bsymb{L}^4(\O;\R^d)$ as $m \rightarrow \infty$. Assume that $\mathcal B:\bsymb{L}^2(\O;\R^{d\times d})\times \bsymb{L}^4(\O;\R^d)\times \bsymb{L}^4(\O;\R^d)\to\R$ is a continuous trilinear form. Then,
	$\mathcal{B}(\Xi_m, \Theta_m, X_m) \rightarrow \mathcal{B}(\Xi, \Theta, X)$ as $m \rightarrow \infty.$
\end{lemma}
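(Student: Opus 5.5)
The plan is to write the difference as a telescoping sum and then use trilinearity together with the continuity bound from $\assum{2}$. The continuity of $\mathcal{B}$ gives a constant $M$ with
\[
|\mathcal{B}(\Xi,\Theta,X)| \le M \|\Xi\|\,\|\Theta\|_{0,4}\,\|X\|_{0,4}
\]
for all arguments. We split
\begin{align*}
\mathcal{B}(\Xi_m,\Theta_m,X_m)-\mathcal{B}(\Xi,\Theta,X)
&= \mathcal{B}(\Xi_m,\Theta_m-\Theta,X_m) + \mathcal{B}(\Xi_m,\Theta,X_m-X)\\
&\quad + \big(\mathcal{B}(\Xi_m,\Theta,X)-\mathcal{B}(\Xi,\Theta,X)\big).
\end{align*}
The ordering is chosen deliberately: the two factors that converge strongly are replaced first, while $\Xi_m$ stays in place. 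Only at the last step is $\Xi_m$ swapped for $\Xi$, with $\Theta$ and $X$ fixed.

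For the first two terms we use the uniform boundedness principle. A weakly convergent sequence in $\bsymb{L}^2(\O;\R^{d\times d})$ is bounded, so $\sup_m\|\Xi_m\|<\infty$. Strongly convergent sequences are also bounded, so $\sup_m\|X_m\|_{0,4}<\infty$. The continuity bound then gives
\[
|\mathcal{B}(\Xi_m,\Theta_m-\Theta,X_m)|\le M\sup_m\|\Xi_m\|\,\sup_m\|X_m\|_{0,4}\,\|\Theta_m-\Theta\|_{0,4}\to0,
\]
and the same argument applied to $\|X_m-X\|_{0,4}$ shows the second term tends to zero.

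The third term is the one where weak convergence alone has to suffice, and it is the only point needing care. Fix $\Theta$ and $X$. The map $\Xi'\mapsto\mathcal{B}(\Xi',\Theta,X)$ is linear and, by the continuity bound, bounded on $\bsymb{L}^2(\O;\R^{d\times d})$ with norm at most $M\|\Theta\|_{0,4}\|X\|_{0,4}$. It is therefore an element of the dual space, and by the definition of weak convergence $\mathcal{B}(\Xi_m,\Theta,X)\to\mathcal{B}(\Xi,\Theta,X)$. Combining the three terms proves the claim.

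The main obstacle is the mixed weak-times-strong product. If $\Xi_m$ were left paired with the varying $\Theta_m$ and $X_m$, weak convergence could not be applied directly. The splitting above isolates $\Xi_m$ against fixed arguments, and the boundedness of weakly convergent sequences controls the remaining terms.
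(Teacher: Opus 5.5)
Your proof is correct. It uses the same ingredients as the paper: a telescoping via trilinearity, the continuity bound for $\mathcal{B}$, and weak/strong convergence. The difference is that you telescope in the opposite order. The paper writes
\[
\mathcal{B}(\Xi_m,\Theta_m,X_m)-\mathcal{B}(\Xi,\Theta,X)=\mathcal{B}(\Xi_m-\Xi,\Theta_m,X_m)+\mathcal{B}(\Xi,\Theta_m-\Theta,X_m)+\mathcal{B}(\Xi,\Theta,X_m-X).
\]
So the paper swaps $\Xi_m$ for $\Xi$ first. Its last two terms then follow from continuity alone, needing only $\sup_m\|X_m\|_{0,4}<\infty$. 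Its first term, however, pairs the weakly null sequence $\Xi_m-\Xi$ with the \emph{varying} arguments $\Theta_m,X_m$, and weak convergence does not apply to that term directly. An extra step is needed, which the paper's ``weak/strong convergence'' leaves implicit. One option is to split once more as
\[
\mathcal{B}(\Xi_m-\Xi,\Theta,X)+\mathcal{B}(\Xi_m-\Xi,\Theta_m-\Theta,X_m)+\mathcal{B}(\Xi_m-\Xi,\Theta,X_m-X)
\]
and use the boundedness of $(\Xi_m)$. Another is to note that $\mathcal{B}(\cdot,\Theta_m,X_m)\to\mathcal{B}(\cdot,\Theta,X)$ in the dual norm, and a strongly convergent functional paired with a weakly convergent sequence converges. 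Your ordering replaces $\Xi_m$ last, against the fixed pair $(\Theta,X)$, which removes this hidden step: each of your three terms is handled immediately. Both routes ultimately rely on $\sup_m\|\Xi_m\|<\infty$ (Banach--Steinhaus); yours just invokes it openly, and so makes explicit the weak-times-strong passage that the paper's one-line justification glosses over.
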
	
\begin{proof}
Simple manipulation leads to
\begin{align*}
\mathcal{B}(\Xi_m, \Theta_m, X_m)- \mathcal{B}(\Xi, \Theta, X)&=\mathcal{B}(\Xi_m-\Xi, \Theta_m, X_m) +\mathcal{B}(\Xi, \Theta_m-\Theta, X_m)\\
&\qquad+\mathcal{B}(\Xi, \Theta, X_m-X).
\end{align*}
The continuity of $\mathcal{B}(\cdot,\cdot,\cdot)$, combined with the weak/strong convergence, yields the conclusion.
\end{proof}
Let $C_{1,\rm eq}$ denotes the constant such that 
\begin{equation}\label{defn.C1}
\|\phi\|_{1} \le C_{1,\rm eq}\, | \phi|_1 
\quad \text{for all } \phi \in H^1_0(\Omega).
\end{equation}
Let $C_{2,\rm eq}$ denote the constant such that
\begin{equation}\label{defn.C2}
\|\varphi\|_{2} \le C_{2,\rm eq}\,| \varphi|_2
\quad \text{for all } \varphi \in H^2_0(\Omega).
\end{equation}
Let  $\overline{\alpha}$ denote the coercivity constant of $\mathcal{A}(\cdot,\cdot)$ and let $\norm{\mathcal B}{}$ denote the norm of $\mathcal B$ on $\bsymb{L}^2(\O;\symd) \times \bsymb{L}^4(\O;\R^d) \times \bsymb{L}^4(\O;\R^d)$. From now onwards, $a\lesssim b$ { means that } $a\le Cb$ for some generic constant $C$ that depends only on the data of the continuous model \eqref{abstract_weak}.
\begin{theorem}[Wellposedness of the HDM for $\bH^{-2}(\Omega)$ data] \label{thm:wellposedness}
	Let the assumptions $\assum{1}-\assum{4}$ hold. Let $\disc$ be an HD, in the sense of Definition \ref{HD} and let $E_\disc$ in $\assum{5}$ such that  $\Gamma(E_{\disc})$ is bounded. Then, there exists at least one weak solution $\Psi_{\disc}\in \bfXd$ to \eqref{abstract_HS}. 

\smallskip

Furthermore, assume $\disc$ is coercive in the sense of Definition \ref{def:coercive}. If the data satisfies the smallness condition
\begin{equation}\label{assum.f.wellposedness}
\|\cL\|_{-2} < \frac{\overline{\alpha}^2}{\|\cB\| C_\disc^2\Gamma(E_\disc) C_{2,\rm eq}},
\end{equation}
then the discrete solution is unique.
\end{theorem}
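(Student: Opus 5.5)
Existence will follow from a Brouwer-type fixed point argument in the finite-dimensional space $\bfXd$, in its standard corollary form: if $F:\bfXd\to\bfXd$ is continuous and $\langle F(V),V\rangle\ge 0$ on a sphere $\norm{V}{\disc}=R$, then $F$ has a zero in the closed ball of radius $R$. Equip $\bfXd$ with the inner product $\langle U,V\rangle:=(\hd U,\hd V)$; this is an inner product because $\norm{\cdot}{\disc}$ is a norm by Definition \ref{HD}. By the Riesz representation theorem, define $F$ through
\[
\langle F(U),\Phi_\disc\rangle=\cA(\hd U,\hd\Phi_\disc)+\cB(\hd U,\nabla_\disc U,\nabla_\disc\Phi_\disc)-\cL(E_\disc\Phi_\disc).
\]
$F$ is continuous: it is polynomial in the coefficients of $U$, since $\hd,\nabla_\disc$ are linear maps on a finite-dimensional space and $\cA,\cB$ are continuous by \assum{1}--\assum{2}. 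Zeros of $F$ are exactly solutions of \eqref{abstract_HS}.

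To get the sign condition, test with $\Phi_\disc=U$. The trilinear term vanishes by \assum{3}, since it has the form $\cB(\Xi,\Theta,\Theta)$. Coercivity \assum{1} gives $\cA(\hd U,\hd U)\ge\overline\alpha\norm{U}{\disc}^2$. For the source, \assum{4} and \eqref{defn.C2} give
\[
|\cL(E_\disc U)|\le\|\cL\|_{-2}\|E_\disc U\|_2\le \|\cL\|_{-2}C_{2,\rm eq}|E_\disc U|_2\le \|\cL\|_{-2}C_{2,\rm eq}\Gamma(E_\disc)\norm{U}{\disc},
\]
using \eqref{sup.ED.est.Gamma}. Hence $\langle F(U),U\rangle\ge \norm{U}{\disc}(\overline\alpha\norm{U}{\disc}-C_{2,\rm eq}\Gamma(E_\disc)\|\cL\|_{-2})$, which is $\ge 0$ once $R=C_{2,\rm eq}\Gamma(E_\disc)\|\cL\|_{-2}/\overline\alpha$. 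The fixed point corollary then yields a solution, and it satisfies the a priori bound $\norm{\Psi_\disc}{\disc}\le R$. Running the same computation directly on any solution also shows that every solution obeys this bound.

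For uniqueness, let $\Psi_1,\Psi_2$ be two solutions and set $W=\Psi_1-\Psi_2$. Subtracting the two equations and testing with $W$ gives
\[
\cA(\hd W,\hd W)=-\big[\cB(\hd\Psi_1,\nabla_\disc\Psi_1,\nabla_\disc W)-\cB(\hd\Psi_2,\nabla_\disc\Psi_2,\nabla_\disc W)\big].
\]
Split the bracket as $\cB(\hd W,\nabla_\disc\Psi_1,\nabla_\disc W)+\cB(\hd\Psi_2,\nabla_\disc W,\nabla_\disc W)$; the second term vanishes by \assum{3}. The first term has to be bounded with $\hd W$ and $\nabla_\disc W$ carrying the powers of $W$ and $\Psi_1$ carrying the data. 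Continuity of $\cB$ together with the Poincaré constant \eqref{def.CD} gives
\[
|\cB(\hd W,\nabla_\disc\Psi_1,\nabla_\disc W)|\le\|\cB\|\,\norm{W}{\disc}\,C_\disc\norm{\Psi_1}{\disc}\,C_\disc\norm{W}{\disc}.
\]
Combining with coercivity and the a priori bound yields
\[
\overline\alpha\norm{W}{\disc}^2\le\|\cB\|C_\disc^2\frac{C_{2,\rm eq}\Gamma(E_\disc)\|\cL\|_{-2}}{\overline\alpha}\norm{W}{\disc}^2,
\]
and \eqref{assum.f.wellposedness} makes the coefficient on the right strictly less than $\overline\alpha$, forcing $W=0$.

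I expect the main care point to be the bookkeeping in this last step. The bound on $\cB$ must be arranged so that exactly $C_\disc^2$ appears, which comes from the two $\bsymb L^4$ gradient slots. One must also check that the a priori bound used is the one derived via $\Gamma(E_\disc)$ and $C_{2,\rm eq}$, so that the resulting constant matches \eqref{assum.f.wellposedness} exactly.
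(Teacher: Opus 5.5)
Your argument is correct, and your uniqueness step is the paper's. You use the same splitting into $\mathcal{B}(\hd W,\nabla_\disc\Psi_1,\nabla_\disc W)+\mathcal{B}(\hd\Psi_2,\nabla_\disc W,\nabla_\disc W)$ and kill the second term with \assum{3}. You then apply the same $C_\disc^2$ bound and the same a priori radius $R_\disc=\overline{\alpha}^{-1}\Gamma(E_\disc)C_{2,\rm eq}\|\mathcal{L}\|_{-2}$, which yields exactly \eqref{assum.f.wellposedness}.

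Existence is where you take a different route.

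The paper freezes the Hessian slot of $\mathcal{B}$. For fixed $\overline{\Psi}_\disc$ it solves the linear problem $\mathcal{A}(\hd\Psi_\disc,\hd\Phi_\disc)+\mathcal{B}(\hd\overline{\Psi}_\disc,\nabla_\disc\Psi_\disc,\nabla_\disc\Phi_\disc)=\mathcal{L}(E_\disc\Phi_\disc)$ by Lax--Milgram, which applies because of \assum{1} and \assum{3}. It then shows that the solution map $F(\overline{\Psi}_\disc)=\Psi_\disc$ sends all of $\bfXd$ into the closed ball of radius $R_\disc$, and applies the plain Brouwer theorem.

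You instead work with the residual map and the acute-angle corollary of Brouwer. The comparison runs as follows:
\begin{itemize}
\item In your version, continuity of the map is immediate, since it is quadratic in the coefficients.
\item The paper only asserts continuity of its solution operator ``by finite dimensionality''. Strictly, that needs the remark that the system matrix depends affinely on $\overline{\Psi}_\disc$ and is invertible for every $\overline{\Psi}_\disc$, so its inverse varies continuously.
\item The paper's version gets the stability bound for every solution automatically: a solution is a fixed point of $F$, hence lies in $F(\bfXd)\subset B_{R_\disc}$.
\item You recover that bound for arbitrary solutions by rerunning the energy estimate, and you correctly note that this is what the uniqueness step needs.
\end{itemize}

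One cosmetic point: the acute-angle lemma is usually stated for a sphere of positive radius. Your sign condition holds for all $\norm{U}{\disc}\ge R$, so you may use any radius $R'\ge R$ with $R'>0$, which covers the degenerate case $R=0$.
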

\begin{proof}
For a given $\overline{\Psi}_\disc \in \bfXd$, consider the problem that seeks ${\Psi_\disc} \in \bfXd$ be such that, for all $\Phi_\disc \in \bfXd$,
	\begin{align}
	&\mathcal{A}_{\overline{\Psi}_\disc}(\Psi_\disc,\Phi_\disc):=\mathcal{A}(\hd \Psi_\disc, \hd \Phi_\disc)+\mathcal{B}(\hd \overline{\Psi}_\disc, \nabla_\disc\Psi_\disc,\nabla_\disc\Phi_\disc)=\mathcal{L}(E_\disc\Phi_\disc). \label{abstract_HS2}
	\end{align}
	The bilinearity of $\mathcal{A}(\cdot,\cdot)$ and the trilinearity of $\mathcal{B}(\cdot,\cdot,\cdot)$, combined with a fixed $\overline{\Psi}_\disc \in \bfXd$ ensure that  $\mathcal{A}_{\overline{\Psi}_\disc}(\cdot,\cdot)$ is bilinear on $\bfXd$. The continuity of  $\mathcal{A}(\cdot,\cdot)$ and $\mathcal{B}(\cdot,\cdot,\cdot)$ shows that  $\mathcal{A}_{\overline{\Psi}_\disc}(\cdot,\cdot)$  is continuous. The property $\mathcal{B}(\hd \overline{\Psi}_\disc, \nabla_\disc\Psi_\disc, \nabla_\disc\Psi_\disc)=0$ from $\assum{3}$, together with the coercivity of $\mathcal{A}(\cdot,\cdot)$ from $\assum{1}$, implies the following estimate:
\begin{align}
\mathcal{A}_{\overline{\Psi}_\disc}(\Psi_\disc,\Psi_\disc) = \mathcal{A}(\hd \Psi_\disc, \hd \Psi_\disc) \ge  \overline{\alpha}\norm{\Psi_\disc}{\disc}^2, \label{abstract_HS_coercive}
\end{align}
where $\overline{\alpha}$ denotes the coercivity constant of $\mathcal{A}(\cdot,\cdot)$. Consequently, the bilinear form $\mathcal{A}_{\overline{\Psi}_\disc}(\cdot,\cdot)$ is coercive on $\bfXd$. The finite dimensionality of $\bfXd$ and the linearity of $\mathcal{L}(E_\disc\cdot)$ from $\assum{4}$--$\assum{5}$ show that $\mathcal{L}(E_\disc\cdot)$ is a continuous linear functional on $\bfXd$. The Lax-Milgram Lemma thus implies the existence and uniqueness of the solution $\Psi_\disc$ for \eqref{abstract_HS2}.

\smallskip

	Define the mapping $F: \bfXd\rightarrow \bfXd$ by $F(\overline{\Psi}_\disc)=\Psi_\disc$, where $\Psi_\disc$ solves \eqref{abstract_HS2}. The finite dimensionality of $\bfXd$ ensures the continuity of $F$. Moreover,  \eqref{abstract_HS_coercive},  \eqref{abstract_HS2} with $\Phi_\disc =\Psi_\disc$, and \eqref{defn.C2} imply,
\begin{align*}
\overline{\alpha}\,\|\Psi_{\disc}\|_{\disc}^2 
\le \mathcal{A}_{\overline{\Psi}_{\disc}}(\Psi_{\disc},\Psi_{\disc}) &= \mathcal{L}(E_{\disc}\Psi_{\disc}) \le \|\mathcal{L}\|_{-2}\, \|E_{\disc}\Psi_{\disc}\|_{2} \\
&\le C_{\rm 2,eq}\, \|\mathcal{L}\|_{-2}\,
\|\hessian E_{\disc}\Psi_{\disc}\|\le \Gamma(E_{\disc})\, C_{2,\rm eq}\, \|\mathcal{L}\|_{-2}\,
\|\Psi_{\disc}\|_{\disc}.
\end{align*}
	where $\Gamma(E_\disc)$ follows from \eqref{sup.ED.est.Gamma}. Thus, we obtain the stability estimate
	\be 
	\norm{\Psi_\disc}{\disc} \le\overline{\alpha}^{-1}\Gamma(E_{\disc})\, C_{2,\rm eq} \norm{\mathcal{L}}{-2}=:\mathnormal{R}_\disc.\label{abstract_HS_stability}
	\ee
	This inequality demonstrates that $F$ maps $\bfXd$ into the closed ball $B_{R_\disc}$ of radius $R_\disc$ centered at the origin. Consequently, the Brouwer fixed-point theorem guarantees that $F$ possesses at least one fixed point $\Psi_\disc$ within this ball. In view of \eqref{abstract_HS2}, this fixed point constitutes a solution to \eqref{abstract_HS}.
\medskip

To prove uniqueness under \eqref{assum.f.wellposedness}, let $\Psi_\disc^1$, $\Psi_\disc^2 \in \bfXd$ solve \eqref{abstract_HS} and set $\tPsi_\disc:=\Psi_\disc^1-\Psi_\disc^2$. 
\smallskip

The bilinearity of $\mathcal{A}(\cdot,\cdot)$ and the trilinearity of $\mathcal{B}(\cdot,\cdot,\cdot)$ yield
$$\mathcal{A}( \hd\tPsi_\disc, \hd \Phi_\disc)+\mathcal{B}(\hd \tPsi_\disc, \nabla_\disc\Psi_\disc^1,\nabla_\disc\Phi_\disc)+\mathcal{B}(\hd \Psi_\disc^2, \nabla_\disc\tPsi_\disc,\nabla_\disc\Phi_\disc)=0$$
for all $\Phi_\disc \in \bfXd.$ The choice $\Phi_\disc=\tPsi_\disc$ and the property $\mathcal{B}(\hd \Psi_\disc^2, \nabla_\disc\tPsi_\disc,\nabla_\disc\tPsi_\disc)=0$ from $\assum{3}$ lead to
$$\mathcal{A}( \hd\tPsi_\disc, \hd \tPsi_\disc)+\mathcal{B}(\hd \tPsi_\disc, \nabla_\disc\Psi_\disc^1,\nabla_\disc\tPsi_\disc)=0.$$
Since $\mathcal{A}( \hd\tPsi_\disc, \hd \tPsi_\disc) \ge  \overline{\alpha}\norm{\tPsi_\disc}{\disc}^2$, the definitions of $\norm{\mathcal B}{}$ and $C_\disc$ imply
\begin{equation}\label{ineq.uniqueness}
(\overline{\alpha}-\norm{\mathcal B}{}C_\disc^2\norm{\Psi_\disc^1}{\disc})\norm{\tPsi_\disc}{\disc}^2 \le 0.
\end{equation}
Since $\Psi_\disc^1$ is a solution to \eqref{abstract_HS}, it satisfies the stability result \eqref{abstract_HS_stability} given by
\[\norm{\Psi_\disc^1}{\disc} \le\overline{\alpha}^{-1}\Gamma(E_{\disc})\, C_{2,\rm eq} \norm{\mathcal{L}}{-2}.\]
A combination of this and the smallness assumption \eqref{assum.f.wellposedness} ensure that $\overline{\alpha}-\norm{\mathcal B}{}C_\disc^2\norm{ \Psi_\disc^1}{\disc} >0$. Consequently, \eqref{ineq.uniqueness} shows that $\tPsi_\disc=0$, which proves uniqueness.

\end{proof}
\begin{theorem}[Convergence of the HDM  for $\bH^{-1}(\Omega)$ data] \label{convergence_abstract_non-linear}
	Let the assumptions $\assum{1}-\assum{3}$ hold. Let $\assum{4}$ be restricted to $\bsymb{H}^{-1}(\Omega)$. Let $(\disc_m)_{m \in \N}$ be a sequence of HDs, in the sense of Definition \ref{HD}, that is coercive, consistent, limit-conforming, and compact in the sense of Definition \ref{def:coercive}. Also, let the companion operator $E_\disc$ in $\assum{5}$ such that $\omega(E_{\dm}) \to 0$ as $m \to \infty$. Then, for any $m \in \N$, there exists at least one weak solution $\Psi_{\dm} \in \bfX_{\disc_m,0}$ to \eqref{abstract_HS}, with $\disc=\disc_m$. Moreover, as $m \rightarrow \infty$, there exist a subsequence of $(\disc_m)_{m \in \N}$ (denoted using the same notation $(\disc_m)_{m \in \N}$), and a solution $\Psi \in \bfX$ to the abstract problem \eqref{abstract_weak} such that $\Pi_{\disc_m}\Psi_{\dm} \rightarrow\Psi$ in $\bsymb{L}^2(\O)$, $\nabla_{\disc_m}\Psi_{\dm}\rightarrow\nabla \Psi$ in $\bsymb{L}^4(\O;\R^d)$ and $\hessian_{\disc_m}\Psi_{\dm}\rightarrow\hessian \Psi$ in $\bsymb{L}^2(\O;\symd)$.
\smallskip

Furthermore, if the data satisfies the smallness condition
\begin{equation}\nonumber 
\|\cL\|_{-1} < \frac{\overline{\alpha}^2}{\|\cB\| C_\disc^2(\omega(E_\disc)+|\O|^{1/4}C_\disc)C_{1,\rm eq}},
\end{equation}
then the discrete solution is unique.
\end{theorem}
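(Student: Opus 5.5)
Existence for each fixed $m$ follows from Theorem~\ref{thm:wellposedness}, provided $\Gamma(E_{\disc_m})$ is finite; it is, because $X_{\disc_m,0}$ is finite dimensional and $E_{\disc_m}$ is linear. For $\bH^{-1}$ data, however, a uniform bound that does not involve $\Gamma$ is needed. I will redo the stability estimate using the $H^1$ duality. Testing \eqref{abstract_HS} with $\Phi_{\disc_m}=\Psi_{\disc_m}$ and using $\assum{3}$ and $\assum{1}$ gives
\[
\overline\alpha\|\Psi_{\disc_m}\|_{\disc_m}^2\le \|\cL\|_{-1}\|E_{\disc_m}\Psi_{\disc_m}\|_1\le C_{1,\rm eq}\|\cL\|_{-1}\|\nabla E_{\disc_m}\Psi_{\disc_m}\|.
\]
The last factor is split as
\[
\|\nabla E\Psi\|\le\|\nabla E\Psi-\nabla_{\disc}\Psi\|+|\O|^{1/4}\|\nabla_\disc\Psi\|_{0,4}\le(\omega(E_\disc)+|\O|^{1/4}C_\disc)\|\Psi\|_\disc,
\]
using H\"older's inequality. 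Since $\omega(E_{\disc_m})\to0$ and $C_{\disc_m}\le C_P$, this yields a bound $\|\Psi_{\disc_m}\|_{\disc_m}\le R$ with $R$ uniform in $m$. The same computation, inserted in the uniqueness argument of Theorem~\ref{thm:wellposedness} (inequality \eqref{ineq.uniqueness} with this new stability radius), gives uniqueness under the stated smallness condition.

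For convergence, Lemma~\ref{regularity_nonlinear} (applied componentwise) provides a subsequence and $\Psi\in\bfX$ such that $\Pi_{\disc_m}\Psi_{\disc_m}\rightharpoonup\Psi$, $\nabla_{\disc_m}\Psi_{\disc_m}\rightharpoonup\nabla\Psi$ and $\hessian_{\disc_m}\Psi_{\disc_m}\rightharpoonup\hessian\Psi$. Compactness upgrades the first two convergences to strong ones in $\bL^2$ and $\bL^4$ (the limits are identified through the weak limits). To pass to the limit in the scheme, I fix $\Phi\in\bfX$ and take $\Phi_m$ realising $S_{\disc_m}(\Phi)$, so that $\hessian_{\disc_m}\Phi_m\to\hessian\Phi$ and $\nabla_{\disc_m}\Phi_m\to\nabla\Phi$ strongly. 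The $\mathcal A$ term converges by weak–strong convergence, and the $\mathcal B$ term converges by Lemma~\ref{abstract_trilinear_cv}. For the right-hand side, $\cL(E_{\disc_m}\Phi_m)\to\cL(\Phi)$ must be shown in $H^1_0$. I will use
\[
\|\nabla E_{\disc_m}\Phi_m-\nabla\Phi\|\le\omega(E_{\disc_m})\|\Phi_m\|_{\disc_m}+|\O|^{1/4}\|\nabla_{\disc_m}\Phi_m-\nabla\Phi\|_{0,4}\to0,
\]
where $\|\Phi_m\|_{\disc_m}$ is bounded by consistency. Together with the Poincar\'e inequality \eqref{defn.C1}, this shows that $\Psi$ solves \eqref{abstract_weak}.

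The main obstacle is the strong convergence of the Hessians. I will consider
\[
\mathcal A(\hessian_{\disc_m}\Psi_{\disc_m}-\hessian\Psi,\hessian_{\disc_m}\Psi_{\disc_m}-\hessian\Psi),
\]
and expand it. The term $\mathcal A(\hessian_{\disc_m}\Psi_{\disc_m},\hessian_{\disc_m}\Psi_{\disc_m})$ equals $\cL(E_{\disc_m}\Psi_{\disc_m})$ by $\assum{3}$, and the cross terms converge to $\mathcal A(\hessian\Psi,\hessian\Psi)$ by weak convergence. It therefore remains to show $\cL(E_{\disc_m}\Psi_{\disc_m})\to\cL(\Psi)$. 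This holds because $\nabla E_{\disc_m}\Psi_{\disc_m}-\nabla_{\disc_m}\Psi_{\disc_m}\to0$ (via $\omega\to0$ and the uniform bound), while $\nabla_{\disc_m}\Psi_{\disc_m}\to\nabla\Psi$ strongly in $\bL^4$, hence in $\bL^2$. Thus $E_{\disc_m}\Psi_{\disc_m}\to\Psi$ in $\bH^1_0$, and the limit equals $\cL(\Psi)=\mathcal A(\hessian\Psi,\hessian\Psi)+\mathcal B(\hessian\Psi,\nabla\Psi,\nabla\Psi)=\mathcal A(\hessian\Psi,\hessian\Psi)$. The expanded quantity then tends to $0$, and coercivity of $\mathcal A$ gives strong convergence of the Hessians.
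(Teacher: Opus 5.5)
Your proposal is correct and follows essentially the same route as the paper. The steps match: the $\bH^{-1}$ stability radius built from $\omega(E_\disc)$ and $|\O|^{1/4}C_\disc$ (which also gives the stated uniqueness condition), Lemma~\ref{regularity_nonlinear} plus compactness, passage to the limit with the consistency interpolant, and the energy identity from \assum{3} to upgrade the Hessian convergence. The only cosmetic difference is that you take existence from Theorem~\ref{thm:wellposedness} (finite $\Gamma(E_{\disc_m})$) and get the uniform bound by testing the scheme with its own solution, whereas the paper reruns the Brouwer argument with the new radius $R_\disc$.
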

\begin{proof}
Arguments analogous to those in Theorem~\ref{thm:wellposedness} ensure the existence and uniqueness of the discrete solution $\Psi_\disc \in \bfXd$ to \eqref{abstract_HS}. In this case, the radius $R_\disc$ takes the form:
\be \nonumber
R_\disc := \overline{\alpha}^{-1} C_{1,\rm eq} (\omega(E_\disc) + |\Omega|^{1/4} C_\disc)\norm{\mathcal{L}}{-1}.
\ee
More precisely, \eqref{abstract_HS2}, coercivity of $\cA(\cdot,\cdot)$, $\assum{3}$, \eqref{defn.C1}, triangle inequality with $\nabla_\disc \Psi_\disc$, \eqref{sup.ED.est.omega1}, and \eqref{def.CD} provide
\begin{align*}
\overline{\alpha}\,\|\Psi_{\disc}\|_{\disc}^2 
\le \mathcal{A}_{\overline{\Psi}_{\disc}}(\Psi_{\disc},\Psi_{\disc}) &= \mathcal{L}(E_{\disc}\Psi_{\disc}) \le \|\mathcal{L}\|_{-1}\, \|E_{\disc}\Psi_{\disc}\|_{1} \\
&\le C_{\rm 1,eq}\, \|\mathcal{L}\|_{-1}\,
\|\nabla E_{\disc}\Psi_{\disc}\|\\
&
\le \ C_{\rm 1,eq}\, \|\mathcal{L}\|_{-1}\,
(\|\nabla E_{\disc}\Psi_{\disc}-\nabla_\disc \Psi_\disc\|+\|\nabla_\disc \Psi_\disc\|)\\
&\le  \ C_{\rm 1,eq}\, \|\mathcal{L}\|_{-1}\,
(\omega(E_\disc)\|\Psi_\disc\|_\disc +|\O|^{1/4}\|\nabla_\disc \Psi_\disc\|_{0,4})\\
&\le  \ C_{\rm 1,eq}\, \|\mathcal{L}\|_{-1}\,
(\omega(E_\disc)+|\O|^{1/4} C_\disc)\|\Psi_\disc\|_\disc.
\end{align*}
Consequently, $\|\Psi_\disc\|\le R_\disc$ and wellposedness of the discretisation follows as in Theorem~\ref{thm:wellposedness}.
	
\medskip

From here onwards, let $\Psi_{\dm} \in  \bsymb{X}_{\disc_m,0}$ denote such a solution for $\disc=\disc_m.$ Since $\|\Psi_{\disc_m}\|_{\disc_m} \le R_{\disc_m}$, the sequence $(\Psi_{\disc_m})_{m \in \mathbb{N}}$ is bounded in the discrete norm. By virtue of this bound and Lemma~\ref{regularity_nonlinear}, there exists a subsequence (not relabeled) and a limit $\Psi \in \bfX$ such that the following weak convergences hold as $m \to \infty$:
\begin{align}
    \Pi_{\disc_m} \Psi_{\disc_m} &\rightharpoonup \Psi \; \text{ in } \; \bsymb{L}^2(\Omega), \quad \nabla_{\disc_m} \Psi_{\disc_m} \rightharpoonup \nabla \Psi \;\text{ in }\; \bsymb{L}^4(\Omega;\R^d), \quad  \hessian_{\disc_m} \Psi_{\disc_m} \rightharpoonup \hessian \Psi \;  \text{ in } \;\bsymb{L}^2(\Omega;\symd).\nonumber
\end{align}
Furthermore, the compactness property of the HDM in Definition~\ref{def:coercive} $(iv)$ implies the strong convergence of the reconstruction discrete functions and their discrete gradients:
\begin{equation}\label{eqn.strongcon}
    \Pi_{\disc_m} \Psi_{\disc_m} \to \Psi \text{ in } \bsymb{L}^2(\Omega) \quad \text{and} \quad \nabla_{\disc_m} \Psi_{\disc_m} \to \nabla \Psi \text{ in } \bsymb{L}^4(\Omega;\R^d).
\end{equation}

	\medskip

Let $P_\disc:\bfX \rightarrow \bfXd$ be defined by
		\begin{equation}\label{def:PD}
			P_\disc \Phi:= \argmin_{w_\disc \in \bfXd}\Big(\norm{\Pi_\disc w_\disc-\Phi}{}
		+\norm{\nabla_\disc w_\disc-\nabla\Phi}{0,4}	+\norm{\hd w_\disc-\hessian \Phi}{}\Big).
	\end{equation}
The consistency of the sequence $(\disc_m)_{m \in \mathbb{N}}$ in in Definition~\ref{def:coercive} $(ii)$ implies that, as $m \to \infty$, the interpolant $P_{\disc_m} \Phi$ satisfies
\begin{align*}
  &  \Pi_{\disc_m} P_{\disc_m} \Phi \to \Phi \text{ in } \bsymb{L}^2(\Omega), \quad 
    \nabla_{\disc_m} P_{\disc_m} \Phi \to \nabla \Phi \text{ in } \bsymb{L}^4(\Omega;\mathbb{R}^d), \quad \hessian_{\disc_m} P_{\disc_m} \Phi \to \hessian \Phi \text{ in }\bsymb{L}^2(\Omega;\symd).
\end{align*}
Consequently, Lemma~\ref{abstract_trilinear_cv} alongside the bilinearity and continuity of $\mathcal{A}$ provide, as $m \to \infty$,
\begin{multline}
    \mathcal{A}(\hessian_{\disc_m} \Psi_{\disc_m}, \hessian_{\disc_m} P_{\disc_m} \Phi) + B_{\disc}(\hessian_{\disc_m} \Psi_{\disc_m}, \nabla_{\disc_m} \Psi_{\disc_m}, \nabla_{\disc_m} P_{\disc_m} \Phi) \\
    \to \mathcal{A}(\hessian \Psi, \hessian \Phi) + \mathcal{B}(\hessian \Psi, \nabla \Psi, \nabla \Phi). \label{abstract_passlimit1}
\end{multline}
The definition of $\|\cdot\|_{-1}$, the triangle inequality, the embedding $\bsymb{L}^4(\Omega) \hookrightarrow \bsymb{L}^2(\Omega)$, the convergence $\nabla_{\disc_m} P_{\disc_m} \Phi \to \nabla \Phi$ in $\bsymb{L}^4(\Omega;\mathbb{R}^d)$, and $\omega(E_{\disc_m}) \to 0$ as $m \to \infty$ show
\begin{align}
    |\mathcal{L}(E_{\disc_m} P_{\disc_m} \Phi) - \mathcal{L}(\Phi)| 
    &\lesssim \|\mathcal{L}\|_{-1} \|\nabla E_{\disc_m} P_{\disc_m} \Phi - \nabla \Phi\| \nonumber \\
    &\lesssim \|\mathcal{L}\|_{-1} ( \|\nabla E_{\disc_m} P_{\disc_m} \Phi - \nabla_{\disc_m} P_{\disc_m} \Phi\| + \|\nabla_{\disc_m} P_{\disc_m} \Phi - \nabla \Phi\|) \nonumber \\
    &\lesssim \|\mathcal{L}\|_{-1} ( \omega(E_{\disc_m}) \|P_{\disc_m}\Phi\|_\disc+ \|\nabla_{\disc_m} P_{\disc_m} \Phi - \nabla \Phi\|_{0,4} ) \nonumber \\
    &\to 0 \quad \text{as } m \to \infty. \label{abstract_passlimit2}
\end{align}
The substitution $\Phi_{\disc_m} = P_{\disc_m} \Phi$ into \eqref{abstract_HS} with $\disc = \disc_m$, followed by the application of limits \eqref{abstract_passlimit1} and \eqref{abstract_passlimit2}, establishes that $\Psi$ satisfies the weak formulation \eqref{abstract_weak}.
	
	\medskip
	
It remains to prove the strong convergence of $\hessian_{\disc_m}\Psi_{\dm}$. The assumption $\assum{3}$, \eqref{abstract_HS} with $(\disc,\Phi_\disc)= (\disc_m,\Psi_{\dm})$, and  \eqref{abstract_weak} with $\Phi=\Psi$ imply
\begin{equation}\label{abstract_passlimit2.1}
\mathcal{A}(\hd \Psi_{\dm},\hd \Psi_{\dm})-\mathcal{A}(\hessian \Psi,\hessian \Psi)=\mathcal{L}(E_\disc\Psi_{\dm}) -\mathcal{L}(\Psi).
\end{equation}
Arguments analogue to  \eqref{abstract_passlimit2} together with the strong convergence of $\nabla_\disc\Psi_{\dm}$  in \eqref{eqn.strongcon} lead to
\begin{align}
|\cL(E_{\disc_m}\Psi_{\dm})-\cL(\Psi)|&\lesssim\|\cL\|_{-1}(\omega(E_{\dm})\|\Psi_{\disc_m}\|_\disc+\|\nabla_{\dm}\Psi_{\dm}-\nabla \Psi\|_{0,4})\to 0 \mbox{ as }m \to \infty.\nonumber
\end{align}
Consequently, \eqref{abstract_passlimit2.1} results in
	\begin{align*}
	\lim\limits_{m \rightarrow \infty}\mathcal{A}(\hd \Psi_{\dm},\hd \Psi_{\dm})	
	=\mathcal{A}(\hessian \Psi,\hessian \Psi).
	\end{align*}
The coercivity and bilinearity of $\mathcal{A}$, combined with the weak convergence of $\hessian_{\disc_m} \Psi_{\disc_m}$, lead to
\begin{align*}
\limsup_{m \to \infty} \overline{\alpha} \norm{\hessian_{\disc_m} \Psi_{\disc_m} - \hessian \Psi}{}^2 \le \limsup_{m \to \infty} \mathcal{A}(\hessian_{\disc_m} \Psi_{\disc_m} - \hessian \Psi, \hessian_{\disc_m} \Psi_{\disc_m} - \hessian \Psi) = 0.
\end{align*}
This result establishes the strong convergence $\norm{\hessian_{\disc_m} \Psi_{\disc_m} - \hessian \Psi}{} \to 0$ as $m \to \infty$.
\end{proof}

\section{Error estimates}\label{sec:error}

This section discusses the results that enable proofs of uniqueness and error estimates for the HS. These results hold under the assumption that the regular solution (see Definition \ref{def.regular} below) possesses sufficient smoothness and that the data resides in $\bH^{-1}(\Omega)$. The proof is a generalization of ideas from \cite{ng2} to the HDM framework and is provided for completeness. The main results are stated first, with proofs provided in Sections~\ref{sec.wellposedness} and~\ref{sec.existence}. Section~\ref{sec.Newton} establishes the convergence of Newton's method.

\medskip

In this section, we assume that $\mathcal{A}(\cdot,\cdot)$ is the standard $L^2$ inner product on tensors:
\be \label{A.tensor}
\fl \Phi, \, \Theta \in \bsymb{L}^2(\O;\symd)\,,\quad \mathcal{A}(\Phi,\Theta)=\int_{\O}\Phi:\Theta \d\x.
\ee
Similarly, the form $\cB(\cdot,\cdot,\cdot)$ is defined by: for all  $ \Phi \in \bsymb{L}^2(\O;\symd)$ and for all $\Theta, \Xi \in \bsymb{L}^4(\O;\R^d),$
\be\label{B.tensor}
  \mathcal{B}(\Phi,\Theta,\Xi)=\int_{\O}\Phi:h(\Theta, \Xi)\d\x, 
\ee
where $h(\cdot,\cdot)$ is bilinear on $\R^{dk} \times \R^{dk}$.

\medskip

Furthermore, assume that $\Omega \subsetneq \R^d$, $d \le 3$, is a convex domain and the exact solution $\Psi \in \bfX$ to \eqref{abstract_weak} belongs to $\bsymb{H}^3(\O)$. Note that, by Sobolev embeddings, this smoothness implies $\nabla\Psi\in \bsymb{L}^\infty(\O;\R^d)$ and $\Psi\in \bsymb{W}^{2,4}(\O)$. 
\begin{remark}
It is straightforward to verify that the bilinear and trilinear forms associated with both the Navier–Stokes and von  K\`arman equations (detailed in Section \ref{sec:applications}) are consistent with the definitions provided in \eqref{A.tensor} and \eqref{B.tensor}, respectively. 
\smallskip

It is well-known that \cite[Theorem 7]{HBRR} for the Navier–Stokes and von  K\`arman equations, if $\O$ is a convex polygonal domain and the load function belongs to $\bsymb{H}^{-1}(\O)$, then the exact solution $\Psi$ belongs to $\bsymb{H}^3(\O) \cap \bfX$.  
\end{remark}

To establish error estimates for the semi-linear problem, we introduce an auxiliary limit-conformity measure defined by: for all $\xi \in \bH_{\div}(\O):=\{\phi \in L^2(\O;\R^{d \times d}):\, \mbox{div}\phi \in L^2(\O;\R^d)\}$, 
\be\label{def.WDtilde}
\begin{aligned}
	&\widetilde{W}_\disc(\xi):=\max_{w_\disc\in X_{\disc,0}\backslash\{0\}}
	\frac{1}{\norm{w_\disc}{\disc}}\Bigg|\int_\O \Big( \xi:\hd w_\disc+ (\mbox{div}\xi)\cdot\nabla_\disc w_\disc \Big)\d\x \Bigg|.
\end{aligned}
\ee
Note that $\widetilde{W}_\disc$ measures the error in the discrete integration-by-parts formula between the reconstructed Hessian and the reconstructed gradient.

By applying the triangle inequality, \eqref{def.WD}, and \eqref{def.WD_gradient}, it follows naturally that
$$\widetilde{W}_\disc(\xi) \le W_\disc(\xi)+\hat{W}_\disc(\rm{div}\xi).$$


\smallskip

Fixing $\Psi\in \bfX$, a linearization of \eqref{abstract_weak} around $\Psi$ in the direction of $\Theta$ is given by $${\mathbb{A}}_\Psi(\Theta, \Phi):=\mathcal{A}(\hessian \Theta,\hessian \Phi)+\mathcal{B}(\hessian \Psi, \nabla \Theta, \nabla \Phi)+\mathcal{B}(\hessian \Theta, \nabla \Psi, \nabla \Phi) \quad \fl \Phi \in \bfX.$$

\begin{definition}[Regular/non-singular/isolated solution \cite{Brezzi}]\label{def.regular}
The exact solution $\Psi$ to \eqref{abstract_weak} is said to be regular if the linearized problem is well-posed; that is,  for a given $G \in \bsymb{H}^{-1}(\O)$, the problem
\be \label{linearised.problem}
{\mathbb{A}}_\Psi(\Theta, \Phi)=\langle G, \Phi \rangle_{-1,1}\quad \fl\Phi \in \bfX
\ee  
has a unique solution $\Theta \in \bfX$, and this solution satisfies $\norm{\Theta}{2}\lesssim\norm{G}{-1}$. Equivalently, $\Psi$ is a regular solution if and only if there exists a constant $\beta>0$ such that
\be \label{beta}
\beta \le \inf_{ \substack{\Theta \in \bfX \\ |\Theta|_2 = 1} } \,
    \sup_{ \substack{\Phi \in \bfX\\ |\Phi|_2 = 1} } 
    \mathbb{A}_\Psi(\Theta, \Phi).
\ee 
\end{definition}

The HS that corresponds to the linearized problem \eqref{linearised.problem} seeks $\Theta_\disc \in \bfXd$ such that
\be \label{linearised.HS}
{\mathbb{A}}_{\disc, \Psi}( \Theta_\disc,  \Phi_\disc)=\langle G, E_\disc\Phi_\disc\rangle_{-1,1}\quad \fl \Phi_\disc \in \bfXd,
\ee 
where
\begin{align}
& {\mathbb{A}}_{\disc, \Psi}(\Theta_\disc, \Phi_\disc)={}\mathcal{A}(\hd \Theta_\disc,\hd \Phi_\disc)+\mathcal{B}(\hessian \Psi, \nabla_\disc \Theta_\disc, \nabla_\disc \Phi_\disc)+\mathcal{B}(\hd \Theta_\disc, \nabla \Psi, \nabla_\disc \Phi_\disc).\label{discrete.linear}
\end{align}
The proof of well-posedness for the discrete linearized problem \eqref{linearised.HS}  (see Theorem \ref{thm:discrete.lin} below) relies on the coercivity, consistency, and limit-conformity of the HDM, along with the companion operator $E_\disc$ in $\assum{5}$ and its associated estimates \eqref{sup.ED.est.omega1} and \eqref{sup.ED.est.Gamma}.

\smallskip

 Inspired by the notion of space size for gradient discretisations \cite[Definition 2.22]{gdm}, we set
	\begin{align}
		\alpha_\disc:=\sup_{\phi  \in  ({H^{3}(\O)}\cap X)\setminus{\{0\}}}\frac{S_\disc(\phi)}{\norm{\phi}{3}}\quad\mbox{ and }\quad
		\gamma_\disc:=\sup_{\xi \in H^1(\O;\R^{d \times d})\setminus{\{0\}}}\frac{\widetilde{W}_\disc(\xi)}{\norm{\xi}{1}}.\label{WDtilde.sup}
	\end{align}
 \begin{remark}
 	Based on the estimates that can be established on $S_\disc$ and $\widetilde{W}_\disc$, it is expected that $\alpha_\disc$ and $\gamma_\disc$ will be small for HDs based on small meshes, see for example Lemma \ref{th.MorleyAdini}.
 \end{remark}

	\begin{theorem}[Well-posedness of the discrete linearized problem]\label{thm:discrete.lin}
		Let $\Psi \in \bH^3(\O)\cap\bfX$ be a regular solution to \eqref{abstract_weak}. For any $\Gamma\ge 0$, there exists $\rho >0$ such that, if $$C_\disc\le \Gamma,\,\Gamma(E_\disc) \le \Gamma,\,\omega(E_\disc)\le \rho,\,\alpha_\disc \le \rho, \;\mbox{ and }\;\gamma_\disc \le \rho,$$ then the discrete linearized problem \eqref{linearised.HS} is well-posed. 
	\end{theorem}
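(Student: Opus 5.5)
Since \eqref{linearised.HS} is a square finite-dimensional linear system, it suffices to prove a discrete inf--sup condition: there is $\beta_0>0$, independent of $\disc$, such that for every $\Theta_\disc\in\bfXd$ with $\norm{\Theta_\disc}{\disc}=1$ there exists $\Phi_\disc\in\bfXd$ with $\norm{\Phi_\disc}{\disc}\lesssim 1$ and $\mathbb{A}_{\disc,\Psi}(\Theta_\disc,\Phi_\disc)\ge\beta_0$. I will split $\mathbb{A}_{\disc,\Psi}$ into the coercive part $\mathcal{A}(\hd\cdot,\hd\cdot)$ and the lower-order perturbation $\mathcal{B}(\hessian\Psi,\nabla_\disc\cdot,\nabla_\disc\cdot)+\mathcal{B}(\hd\cdot,\nabla\Psi,\nabla_\disc\cdot)$. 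I will then transfer the continuous inf--sup \eqref{beta} to the discrete level by a Schatz-type argument. The transfer runs through the companion operator $E_\disc$ (discrete $\to$ continuous) and the interpolant $P_\disc$ of \eqref{def:PD} (continuous $\to$ discrete).

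\textbf{Step 1 (auxiliary continuous problem).} Given $\Theta_\disc$, set $G:=-\mathcal{B}(\hessian\Psi,\nabla E_\disc\Theta_\disc,\nabla\cdot)-\mathcal{B}(\hessian E_\disc\Theta_\disc,\nabla\Psi,\nabla\cdot)$. Since $\Psi\in\bH^3$ gives $\nabla\Psi\in\bsymb{L}^\infty$ and $\hessian\Psi\in\bsymb{L}^4$, the first term lies in $\bH^{-1}$. The second term, $\int\hessian E_\disc\Theta_\disc:h(\nabla\Psi,\nabla\Phi)$, must be integrated by parts once to be shown in $\bH^{-1}$; I expect this to be delicate. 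Alternatively I only need $G\in\bH^{-2}$ bounded by $\Gamma(E_\disc)$, with the linearized problem solved in $\bfX\cap\bH^3$ using convexity. Let $\xi\in\bfX$ solve the Stokes-like problem $\mathcal{A}(\hessian\xi,\hessian\Phi)=\langle G,\Phi\rangle$. Convexity then gives $\xi\in\bH^3$ with $\norm{\xi}{3}\lesssim\Gamma(E_\disc)$. Here the $\bH^{-1}$ regularity of the right-hand side is exactly what is needed, and this is why the restriction to $G\in \bH^{-1}$ matters.

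\textbf{Step 2 (test function and estimate).} Take $\Phi_\disc:=\Theta_\disc+P_\disc\xi$. Then
\[
\mathbb{A}_{\disc,\Psi}(\Theta_\disc,\Phi_\disc)=\norm{\Theta_\disc}{\disc}^2+\big[\mathcal{A}(\hd\Theta_\disc,\hd P_\disc\xi)+\text{lower order}(\Theta_\disc,\Theta_\disc)\big]+\text{lower order}(\Theta_\disc,P_\disc\xi).
\]
I will show that the bracket nearly cancels, up to $O(\rho)$. I replace $\hd P_\disc\xi$ by $\hessian\xi$, with error controlled by $\alpha_\disc\norm{\xi}{3}$. I then write $\mathcal{A}(\hd\Theta_\disc,\hessian\xi)$, using $\widetilde W_\disc(\hessian\xi)\le\gamma_\disc\norm{\xi}{3}$, as $-\int\div\hessian\xi\cdot\nabla_\disc\Theta_\disc$. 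The same identity for $E_\disc\Theta_\disc$ holds exactly, and the difference is controlled by $\omega(E_\disc)$. This reproduces $\mathcal{A}(\hessian E_\disc\Theta_\disc,\hessian\xi)=\langle G,\xi\rangle$-type terms, which cancel the $\mathcal{B}$-terms after replacing $\nabla_\disc\Theta_\disc$ by $\nabla E_\disc\Theta_\disc$ at cost $\omega(E_\disc)$. The term $\mathcal{B}(\hd\Theta_\disc,\nabla\Psi,\nabla_\disc\Theta_\disc)$ must be handled via $\widetilde W_\disc$ with $\xi$ replaced by $h(\nabla\Psi,\cdot)$-type tensors. I expect this to be the \emph{main obstacle}, because $\nabla_\disc\Theta_\disc$ is not smooth. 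I would try to rewrite it as $\mathcal{B}(\hessian E_\disc\Theta_\disc,\nabla\Psi,\nabla E_\disc\Theta_\disc)$ plus errors. That requires $\hd\Theta_\disc$ versus $\hessian E_\disc\Theta_\disc$, which is not small.

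\textbf{Fallback: compactness--contradiction.} If the direct route fails, I will argue by contradiction. Suppose there are sequences $\disc_m$ with $\rho_m\to0$ and $\Theta_m$ with $\norm{\Theta_m}{\disc_m}=1$ and $\sup_{\Phi}\mathbb{A}_{\disc_m,\Psi}(\Theta_m,\Phi)/\norm{\Phi}{\disc_m}\to0$. By $\Gamma(E_\disc)\le\Gamma$, the functions $E_{\disc_m}\Theta_m$ are bounded in $\bH^2_0$, so they converge weakly to some $\Theta$ and strongly in $\bH^1$. Since $\omega\to0$, $\nabla_{\disc_m}\Theta_m\to\nabla\Theta$ in $L^2$; $\gamma_\disc\to0$ identifies the weak limit of $\hd\Theta_m$ as $\hessian\Theta$. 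Testing with $P_{\disc_m}\Phi$ and using $\alpha_\disc\to0$ gives $\mathbb{A}_\Psi(\Theta,\Phi)=0$, so $\Theta=0$ by \eqref{beta}. Testing with $\Theta_m$ itself then gives $1=\norm{\Theta_m}{\disc_m}^2=-\mathcal{B}(\ldots)\to0$, using the strong convergence of the gradient terms. The upgrade from $L^2$ to $L^4$ convergence of the gradients follows by interpolation with the bound $C_\disc\le\Gamma$. This is a contradiction, which proves the inf--sup condition and hence well-posedness.
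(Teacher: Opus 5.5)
\paragraph{Steps 1--2 do not work.} Your direct route does not close, so the proof rests entirely on your fallback. Write $\ell(\Theta,\Phi):=\mathcal{B}(\hessian\Psi,\nabla\Theta,\nabla\Phi)+\mathcal{B}(\hessian\Theta,\nabla\Psi,\nabla\Phi)$. You define $\xi$ by $\mathcal{A}(\hessian\xi,\hessian\Phi)=-\ell(E_\disc\Theta_\disc,\Phi)$ and test with $\Theta_\disc+P_\disc\xi$. The continuous analogue of this computation is exactly $\mathbb{A}_\Psi(\Theta,\Theta+\xi)=\mathcal{A}(\hessian\Theta,\hessian\Theta)-\mathcal{A}(\hessian\xi,\hessian\xi)$.

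\begin{itemize}
\item Your bracket does cancel. However, the term you list last (the lower-order part at $(\Theta_\disc,P_\disc\xi)$) is, up to $O(\rho)$, equal to $\ell(E_\disc\Theta_\disc,\xi)=-\mathcal{A}(\hessian\xi,\hessian\xi)$. This is neither small nor of the right sign, and you never say how to treat it.
\item More fundamentally, \eqref{beta} never enters Steps 1--2. They therefore cannot distinguish a regular linearization from a singular one.
\item The two points you flag as delicate are in fact harmless. First, $\Phi\mapsto\mathcal{B}(\hessian E_\disc\Theta_\disc,\nabla\Psi,\nabla\Phi)$ lies in $\bH^{-1}$ directly, because $\nabla\Psi\in L^\infty$. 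Second, $\mathcal{B}(\hd\Theta_\disc-\hessian E_\disc\Theta_\disc,\nabla\Psi,\nabla E_\disc\Theta_\disc)$ is $O\big((\gamma_\disc+\omega(E_\disc))\norm{\Theta_\disc}{\disc}^2\big)$, because $h(\nabla\Psi,\nabla E_\disc\Theta_\disc)\in H^1$ with norm $\lesssim\norm{\Theta_\disc}{\disc}$.
\end{itemize}

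\paragraph{The fallback is a correct proof.} Your compactness--contradiction argument is complete, and it takes a genuinely different route from the paper. Each step checks out:
\begin{itemize}
\item Rellich applied to $E_{\disc_m}\Theta_m$ (bounded in $\bH^2_0$ since $\Gamma(E_{\disc_m})\le\Gamma$), together with $\omega(E_{\disc_m})\to0$, gives $\nabla_{\disc_m}\Theta_m\to\nabla\Theta$ in $L^2$.
\item $\gamma_{\disc_m}\to0$ identifies the weak limit of $\hessian_{\disc_m}\Theta_m$ as $\hessian\Theta$.
\item $\alpha_{\disc_m}\to0$ lets you pass to the limit against $P_{\disc_m}\Phi$ for $\Phi\in\bH^3\cap\bfX$. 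Add the density step to all $\Phi\in\bfX$ before invoking \eqref{beta}.
\item Testing with $\Theta_m$ then gives the contradiction.
\end{itemize}
You also rightly avoid Definition~\ref{def:coercive}(iv), which is not available for the arbitrary sequence $(\disc_m)$ produced by negating the statement.

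One claim should be dropped. Interpolating $L^2$ convergence against the $L^4$ bound from $C_\disc\le\Gamma$ gives convergence only in $L^p$ for $p<4$, not in $L^4$. You do not need it: $\mathcal{B}(\hessian\Psi,\nabla_{\disc_m}\Theta_m,\nabla_{\disc_m}\Theta_m)=0$ by \assum{3}, and every other term only requires $\nabla_{\disc_m}\Theta_m$ in $L^2$, by H\"older with $\hessian\Psi\in L^4$ or $\nabla\Psi\in L^\infty$.

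\paragraph{Comparison with the paper.} The paper argues quantitatively, in Schatz style:
\begin{itemize}
\item testing with $\Theta_\disc$ gives the G{\aa}rding bound \eqref{ineq.HD};
\item $\omega(E_\disc)$ transfers $\norm{\nabla_\disc\Theta_\disc}{}$ to $\norm{\nabla E_\disc\Theta_\disc}{}$;
\item the latter is bounded by duality with the adjoint problem \eqref{linearised.dual.problem}, taking $Q=-\Delta E_\disc\Theta_\disc$ and using $\norm{\zeta}{3}\lesssim\norm{Q}{-1}$ and the estimates for $T_1,\dots,T_8$;
\item a kick-back then yields an explicit $\rho$ and an explicit stability constant.
\end{itemize}
Your argument replaces the duality step by Rellich compactness, and replaces the $\bH^3$-regularity of the dual solution by mere injectivity of the continuous linearization. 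It therefore uses less. At this step you need no dual regularity, hence no convexity, and you use $\omega(E_\disc),\alpha_\disc,\gamma_\disc\to0$ only qualitatively. The price is that it is non-constructive: $\rho$ and $\hat\beta$ exist, but with no explicit dependence on $\Gamma$, $\beta$ and $\Psi$. That is enough for Lemma~\ref{lemma.discrete.pert} and Theorem~\ref{thm.map}, which only need $\hat\beta$ to be independent of $\disc$.
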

Since the discrete linearized problem \eqref{linearised.HS} is well-posed, there exists a constant $\hat{\beta}>0$ such that the following discrete inf-sup condition holds.
\begin{equation}\label{def.betabar}
\hat{\beta}	\le \inf_{ \substack{\Theta_\disc \in \bfXd \\ \|\Theta_\disc\|_\disc = 1} } \,
    \sup_{ \substack{\Phi_\disc  \in \bfXd\\ \|\Phi_\disc\|_\disc = 1} } {\mathbb{A}}_{\disc,\Psi}(\Theta_\disc,\Phi_\disc),
\end{equation}
where $\hat{\beta}$ depends on $\Gamma$ and the continuous inf-sup constant $\beta$ in \eqref{beta}.  

	\begin{theorem}[Existence of discrete solution and error estimates]\label{thm.err}
		Let $\Psi \in \bH^3(\O)\cap\bfX$ be a regular solution to \eqref{abstract_weak}. Under the assumptions of Theorem \ref{thm:discrete.lin} and $\delta(E_\disc) \le \rho$, there exists a solution $\Psi_\disc \in \bfXd$ to \eqref{abstract_HS} that satisfies $\norm{ \Psi_\disc-P_\disc \Psi}{\disc}\lesssim \rho$, as well as the following estimates:
		\begin{align}
			\norm{\hessian \Psi-\hd\Psi_\disc}{}&\lesssim \rho,\ 
			\norm{\nabla \Psi-\nabla_\disc\Psi_\disc}{} \lesssim \rho,\,\mbox{ and }\,
			\norm{ \Psi-\Pi_\disc\Psi_\disc}{}\lesssim \rho,\label{est.func}
		\end{align}	
where the hidden constants in ``$\lesssim$'' depend on $\hat{\beta}$, $\Gamma$, $\Psi$ but not $\rho$ or $\disc$.
	\end{theorem}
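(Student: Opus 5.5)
The plan is to follow the Brouwer fixed-point approach of \cite{ng2}, with the discrete linearised problem around the exact solution $\Psi$ as the base operator. Theorem~\ref{thm:discrete.lin} gives well-posedness of \eqref{linearised.HS} and the inf-sup constant $\hat\beta$ in \eqref{def.betabar}. Define a nonlinear map $\mu:\bfXd\to\bfXd$ as follows: for $\Theta_\disc\in\bfXd$, let $\mu(\Theta_\disc)$ be the unique solution of
\begin{equation*}
\mathbb{A}_{\disc,\Psi}(\mu(\Theta_\disc),\Phi_\disc)=\mathcal{L}(E_\disc\Phi_\disc)+\mathcal{B}(\hessian\Psi,\nabla\Psi,\nabla_\disc\Phi_\disc)+\mathcal{B}(\hessian \Psi,\nabla_\disc\Theta_\disc-\nabla\Psi,\nabla_\disc\Phi_\disc)\ldots
\end{equation*}
More precisely, the right-hand side is chosen so that fixed points of $\mu$ are exactly solutions of \eqref{abstract_HS}. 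Concretely, I would take it to be $\mathcal{L}(E_\disc\Phi_\disc)-\mathcal{B}(\hd\Theta_\disc,\nabla_\disc\Theta_\disc,\nabla_\disc\Phi_\disc)+\mathcal{B}(\hessian\Psi,\nabla_\disc\Theta_\disc,\nabla_\disc\Phi_\disc)+\mathcal{B}(\hd\Theta_\disc,\nabla\Psi,\nabla_\disc\Phi_\disc)$. The map $\mu$ is continuous because $\bfXd$ is finite-dimensional. The goal is to show that $\mu$ maps the ball $B_R:=\{\Theta_\disc:\norm{\Theta_\disc-P_\disc\Psi}{\disc}\le R\}$ into itself for $R= C\rho$ with $C$ suitably large. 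Brouwer then gives a fixed point $\Psi_\disc$ with $\norm{\Psi_\disc-P_\disc\Psi}{\disc}\lesssim\rho$.

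\textbf{Step 1 (consistency of $P_\disc\Psi$).} Using \eqref{def.betabar}, $\hat\beta\norm{\mu(\Theta_\disc)-P_\disc\Psi}{\disc}\le\sup_{\Phi_\disc}\mathbb{A}_{\disc,\Psi}(\mu(\Theta_\disc)-P_\disc\Psi,\Phi_\disc)$. I will insert the continuous equation \eqref{abstract_weak} tested with $E_\disc\Phi_\disc$. Subtracting produces three groups of terms.

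The first group is $\mathcal{A}(\hessian\Psi-\hd P_\disc\Psi,\hd\Phi_\disc)$, bounded by $S_\disc(\Psi)\le\alpha_\disc\norm{\Psi}{3}$.

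The second group is the conformity defect $\mathcal{A}(\hessian\Psi,\hd\Phi_\disc)-\mathcal{A}(\hessian\Psi,\hessian E_\disc\Phi_\disc)$. I would not work with $\hessian E_\disc\Phi_\disc$ directly. Instead, integrating by parts once on the continuous side gives $-(\div\hessian\Psi,\nabla E_\disc\Phi_\disc)$, and $\widetilde W_\disc(\hessian\Psi)$ gives $(\hessian\Psi,\hd\Phi_\disc)\approx-(\div\hessian\Psi,\nabla_\disc\Phi_\disc)$. The defect is therefore bounded by $(\gamma_\disc+\omega(E_\disc))\norm{\Psi}{3}$. This is where $\Psi\in\bH^3$ and $\omega(E_\disc)$ enter.

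The third group collects the $\mathcal{B}$-terms: $\mathcal{B}(\hessian\Psi,\nabla\Psi,\nabla E_\disc\Phi_\disc-\nabla_\disc\Phi_\disc)$, which is bounded by $\omega(E_\disc)$ using $\nabla\Psi\in\bsymb{L}^\infty$, plus linear consistency errors in $\hessian\Psi-\hd P_\disc\Psi$ and $\nabla\Psi-\nabla_\disc P_\disc\Psi$, which are bounded by $\alpha_\disc$ using $C_\disc\le\Gamma$. Altogether these give $O(\rho)$.

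\textbf{Step 2 (quadratic remainder).} After the linear parts cancel, the remaining terms are quadratic in $e:=\Theta_\disc-P_\disc\Psi$ plus $O(\rho)$ cross terms. The quadratic terms look like $\mathcal{B}(\hd e,\nabla_\disc e,\nabla_\disc\Phi_\disc)$ and are bounded by $\norm{\mathcal B}{}C_\disc^2 R^2$. Hence $\norm{\mu(\Theta_\disc)-P_\disc\Psi}{\disc}\le \hat\beta^{-1}(C_1\rho+C_2R^2+C_3\rho R)$. Choosing $R=2C_1\rho/\hat\beta$ and then $\rho$ small closes the ball. I expect this bookkeeping — especially showing that every linearization error really is $O(\rho)$ or $O(R^2)$ with constants independent of $\disc$ — to be the main obstacle.

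\textbf{Step 3 (error estimates).} By the triangle inequality, $\norm{\hessian\Psi-\hd\Psi_\disc}{}\le\norm{\hessian\Psi-\hd P_\disc\Psi}{}+\norm{P_\disc\Psi-\Psi_\disc}{\disc}\lesssim\alpha_\disc\norm{\Psi}{3}+\rho$. The gradient and function estimates follow in the same way using $C_\disc\le\Gamma$ and $\nabla_\disc P_\disc\Psi\approx\nabla\Psi$. The assumption $\delta(E_\disc)\le\rho$ is reserved for the $L^2$ piece if a companion-based comparison is needed, for instance if one bounds $\Pi_\disc$ through $E_\disc$.
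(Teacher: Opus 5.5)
Your proof is correct. It differs from the paper's in one structural choice: which linear operator you invert to define the fixed-point map.

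\textbf{Your route.} You invert $\mathbb{A}_{\disc,\Psi}$, the form linearised at the exact solution $\Psi$, directly via \eqref{def.betabar}. Your right-hand side is right: fixed points of your $\mu$ are exactly solutions of \eqref{abstract_HS}. After inserting \eqref{abstract_weak} tested with $E_\disc\Phi_\disc$ and writing $e=\Theta_\disc-P_\disc\Psi$, your residual has four parts:
\begin{itemize}
\item the paper's $B_1$, which you bound the same way, via $\widetilde W_\disc(\hessian\Psi)$, $\omega(E_\disc)$ and $S_\disc(\Psi)\le\alpha_\disc\norm{\Psi}{3}$;
\item the paper's $B_2$, again bounded the same way. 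The $\omega(E_\disc)$ piece of $B_2$ uses the pointwise structure \eqref{B.tensor} with H\"older exponents $(2,\infty,2)$, as you indicate; abstract continuity of $\mathcal B$ on $L^2\times L^4\times L^4$ would not suffice.
\item the quadratic term $\mathcal{B}(\hd e,\nabla_\disc e,\nabla_\disc\Phi_\disc)$;
\item the cross terms $\mathcal{B}(\hessian\Psi-\hd P_\disc\Psi,\nabla_\disc e,\nabla_\disc\Phi_\disc)+\mathcal{B}(\hd e,\nabla\Psi-\nabla_\disc P_\disc\Psi,\nabla_\disc\Phi_\disc)$. These are $O(\rho R)$ and are absorbed by taking $\rho$ small.
\end{itemize}

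\textbf{The paper's route.} The paper moves those cross terms into the operator instead. Lemma~\ref{lemma.discrete.pert} shows that the form $\mathfrak{A}_{\disc,\Psi}$, linearised at $P_\disc\Psi$, keeps an inf-sup constant $\hat{\beta}/2$. The price is the extra smallness condition on $\rho\norm{\Psi}{3}$. The paper then defines $\mu$ through $\mathfrak{A}_{\disc,\Psi}$, so the residual \eqref{est.b1b4} is purely $O(\rho)+O(\norm{e}{\disc}^2)$.

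\textbf{What each buys.} Your version saves the perturbation lemma and gives the same radius $R\sim\rho$. The paper's map is built only from discrete quantities ($\hd P_\disc\Psi$, $\nabla_\disc P_\disc\Psi$), and this has two payoffs. Its Lipschitz constant in Lemma~\ref{thm.contraction} is $O(R)$ directly; for your map the difference equation gives $\mathcal{B}(\hessian\Psi-\hd\Theta^1_\disc,\cdot,\cdot)+\mathcal{B}(\cdot,\nabla\Psi-\nabla_\disc\Theta^2_\disc,\cdot)$, hence $O(\rho+R)$, which is equally adequate. Also, the same perturbation argument is what the Newton analysis in Section~\ref{sec.Newton} reuses.

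\textbf{On $\delta(E_\disc)$.} You do not need to reserve $\delta(E_\disc)$ for the $L^2$ estimate. The bound $\norm{\Psi-\Pi_\disc\Psi_\disc}{}\le S_\disc(\Psi)+C_\disc\norm{P_\disc\Psi-\Psi_\disc}{\disc}$ already closes it. The paper's proof never invokes $\delta(E_\disc)$ either.
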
		
\subsection{Well-posedness of the discrete linearised problem}\label{sec.wellposedness}
The following lemma provides essential stability and regularity estimates for the dual problem. These results are instrumental in establishing the well-posedness of the discrete system \eqref{linearised.HS}.

\begin{lemma}[Well-posedness of the dual problem]
	Let $\Psi \in \bH^3(\O)\cap\bfX$ be a regular solution to \eqref{abstract_weak}. Then the dual problem defined by: given $Q \in \bsymb{H}^{-1}(\O)$, find $\zeta \in \bfX$ such that
	\be \label{linearised.dual.problem}
	{\mathbb{A}}_\Psi( \Phi,\zeta)=\langle Q, \Phi \rangle_{-1,1} \quad \fl  \Phi \in \bfX,
	\ee 
	is well-posed and satisfies the \emph{a priori} bound:
	\be \label{dual.est}
	\norm{\zeta}{2}\lesssim\norm{Q}{-1}, \quad \norm{\zeta}{3}\lesssim\norm{Q}{-1}.
	\ee
\end{lemma}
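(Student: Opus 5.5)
Well-posedness of the dual problem \eqref{linearised.dual.problem} follows from that of the primal problem by a finite-dimensional-style duality argument in Hilbert space. The regularity of $\Psi$ is then used in an elliptic bootstrap to upgrade $\zeta$ from $\bfX$ to $\bsymb{H}^3(\O)$.

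\textbf{Step 1: existence and uniqueness.} The bilinear form $\mathbb{A}_\Psi(\cdot,\cdot)$ is continuous on $\bfX\times\bfX$. This uses \assum{1}, \assum{2}, the embedding $H^2_0(\O)\hookrightarrow W^{1,4}(\O)$ for $d\le 3$, and $\nabla\Psi\in\bsymb{L}^\infty$. Definition~\ref{def.regular} says the operator $T:\bfX\to\bsymb{H}^{-2}(\O)$ induced by $\mathbb{A}_\Psi(\Theta,\cdot)$ is injective and satisfies the inf--sup bound \eqref{beta}. I want $T$ to be an isomorphism onto $\bsymb{H}^{-2}(\O)$, not only onto $\bsymb{H}^{-1}(\O)$, so I decompose
\[
\mathbb{A}_\Psi(\Theta,\Phi)=\mathcal{A}(\hessian\Theta,\hessian\Phi)+K(\Theta,\Phi),
\qquad K(\Theta,\Phi):=\mathcal{B}(\hessian \Psi, \nabla \Theta, \nabla \Phi)+\mathcal{B}(\hessian \Theta, \nabla \Psi, \nabla \Phi).
\]
The form $K$ is compact, since it factors through $\nabla\Phi\in\bsymb{L}^4$ and $H^2_0\hookrightarrow W^{1,4}$ is compact for $d\le3$. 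Hence $T$ is coercive plus compact, so it is Fredholm of index zero. Injectivity then gives an isomorphism onto $\bsymb{H}^{-2}(\O)$, and its adjoint $T^*$ is also an isomorphism. This yields a unique $\zeta\in\bfX$ with $\norm{\zeta}{2}\lesssim\norm{Q}{-2}\le\norm{Q}{-1}$, which is the first bound in \eqref{dual.est}. Equivalently, one can argue directly that the inf--sup constant of the transposed form equals $\beta$, because in finite-or-closed-range Hilbert settings the transposed inf--sup holds once the primal operator is bijective.

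\textbf{Step 2: $H^3$ regularity.} Take $\mathcal{A}$ as in \eqref{A.tensor}, so that $\mathcal{A}(\hessian\Phi,\hessian\zeta)=(\hessian\Phi,\hessian\zeta)$. Then \eqref{linearised.dual.problem} says that $\zeta$ is the weak solution of a clamped biharmonic problem $\Delta^2\zeta=Q-\ell$, where $\ell(\Phi):=K(\Phi,\zeta)$. I will show $\ell\in\bsymb{H}^{-1}(\O)$ with $\norm{\ell}{-1}\lesssim\norm{\zeta}{2}$. The second term of $K$ gives
\[
\mathcal{B}(\hessian\Psi,\nabla\Phi,\nabla\zeta)=\int_\O \hessian\Psi:h(\nabla\Phi,\nabla\zeta)\d\x,
\]
which is bounded by $\norm{\hessian\Psi}{0,4}\norm{\nabla\zeta}{0,4}\norm{\nabla\Phi}{}$. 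This is fine because $\Psi\in \bsymb{W}^{2,4}$. The third term is
\[
\mathcal{B}(\hessian\Phi,\nabla\Psi,\nabla\zeta)=\int_\O \hessian\Phi:h(\nabla\Psi,\nabla\zeta)\d\x .
\]
It contains $\hessian\Phi$, so I integrate by parts once, which is allowed since $\Phi\in H^2_0$. This gives $-\int_\O \nabla\Phi\cdot\div\, h(\nabla\Psi,\nabla\zeta)\d\x$. Here $\div\, h(\nabla\Psi,\nabla\zeta)$ involves $\hessian\Psi\,\nabla\zeta$ and $\nabla\Psi\,\hessian\zeta$. Its $L^2$ norm is controlled by $\norm{\Psi}{\bsymb{W}^{2,4}}\norm{\nabla\zeta}{0,4}+\norm{\nabla\Psi}{0,\infty}\norm{\hessian\zeta}{}\lesssim\norm{\Psi}{3}\norm{\zeta}{2}$, using the Sobolev embeddings noted after \eqref{B.tensor}. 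Then the $H^3$ regularity of the biharmonic problem on convex domains with $H^{-1}$ data, for $d\le 3$, gives
\[
\norm{\zeta}{3}\lesssim\norm{Q}{-1}+\norm{\ell}{-1}\lesssim\norm{Q}{-1}+\norm{\zeta}{2}\lesssim\norm{Q}{-1}.
\]

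\textbf{Main obstacle.} The delicate points are Step 1's passage from the primal well-posedness in Definition~\ref{def.regular} to the adjoint, and the integration by parts of the $\hessian\Phi$ term in Step 2. If the Fredholm argument is awkward, I would fall back on the closed-range theorem: \eqref{beta} gives a closed range and injectivity for $T$, and density of the range follows from the uniqueness assumption applied with $\bsymb{H}^{-1}$ data, which is dense in $\bsymb{H}^{-2}$.
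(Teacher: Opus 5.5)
Your proof is correct, and it reaches the result by a more explicit route than the paper. The paper's proof has three sentences. The inf--sup condition \eqref{beta} plus the Babu\v{s}ka theorem is said to give existence and uniqueness of $\zeta$, and both bounds in \eqref{dual.est} are attributed to standard elliptic regularity. For the dual problem the two arguments of $\mathbb{A}_\Psi$ swap roles, so the Babu\v{s}ka theorem really needs the \emph{transposed} inf--sup condition. That amounts to surjectivity of the primal operator onto $\bsymb{H}^{-2}(\Omega)$, whereas Definition~\ref{def.regular} only asserts solvability for $\bsymb{H}^{-1}(\Omega)$ data, and the paper passes over this silently.

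You split $\mathbb{A}_\Psi$ G{\aa}rding-style into the coercive part $\mathcal{A}(\hessian\cdot,\hessian\cdot)$ plus a compact part. The compact part factors through the adjoint of the compact map $\Phi\mapsto\nabla\Phi$ from $H^2_0$ into $L^4$, by Schauder's theorem. This makes the primal operator Fredholm of index zero, so injectivity alone gives an isomorphism onto $\bsymb{H}^{-2}(\Omega)$, and hence the adjoint isomorphism with the same constant $\beta$. This is exactly the step the paper omits. In your closed-range fallback, density of the range comes from the \emph{existence} part of Definition~\ref{def.regular} for $\bsymb{H}^{-1}$ data, not from uniqueness.

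For the $H^3$ bound, you reduce to the linear clamped biharmonic problem. You show, with one integration by parts on the $\hessian\Phi$ term, that the coupling terms define an $\bsymb{H}^{-1}$ functional of norm $\lesssim\norm{\Psi}{3}\norm{\zeta}{2}$. This makes visible where $\Psi\in\bsymb{H}^3(\Omega)$ actually enters, namely through $\Psi\in\bsymb{W}^{2,4}(\Omega)$ and $\nabla\Psi\in\bsymb{L}^\infty$. It also shows that the hidden constant depends on $\Psi$ and $\beta$; the paper leaves all of this inside its citation.

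The one external input both arguments share is $H^3$ regularity, with $\bsymb{H}^{-1}$ data, of the clamped biharmonic problem on convex domains. This is classical for convex polygons but is effectively an assumption when $d=3$.
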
 
\begin{proof}
The assumption that $\Psi$ is regular solution ensures that the linearised bilinear form  ${\mathbb{A}}_\Psi(\cdot,\cdot)$ satisfies the inf-sup condition \eqref{beta}. The existence and uniqueness of $\zeta$ then follow from the Babuska theorem. The a priori estimates \eqref{dual.est} follow from standard elliptic regularity results \cite{HBRR}.
\end{proof}
\begin{proof}[Proof of Theorem \ref{thm:discrete.lin}]
	Since $\bfXd$ is finite dimensional and \eqref{linearised.HS} is linear, the existence of an \emph{a priori} bound implies that the problem has a unique solution. For $\Phi_\disc \in \bfXd$, the definitions \eqref{discrete.linear}, \eqref{A.tensor} and \eqref{B.tensor} of $\mathbb{A}_{\disc,\Psi}$, $\mathcal A$ and $\mathcal B$, the generalised H\"older inequality, and the definition \eqref{def.CD} of $C_\disc$ leads to the following G{\aa}rdings-type inequality.
	\begin{align}
	{\mathbb{A}}_{\disc, \Psi}( \Phi_\disc,  \Phi_\disc)={}&\mathcal{A}(\hd \Phi_\disc,\hd \Phi_\disc)+\mathcal{B}(\hessian \Psi, \nabla_\disc \Phi_\disc, \nabla_\disc \Phi_\disc)+\mathcal{B}(\hd \Phi_\disc, \nabla \Psi, \nabla_\disc \Phi_\disc)\nonumber\\
	\gtrsim {}& \norm{\Phi_\disc}{\disc}^2-C_\disc\norm{\Phi_\disc}{\disc}\norm{\nabla_\disc \Phi_\disc}{}\norm{\hessian\Psi}{0,4}-\norm{ \Phi_\disc}{\disc}\norm{\nabla_\disc \Phi_\disc}{}\norm{\nabla\Psi}{0,\infty},\nonumber
	\end{align} 
	where the constant suppressed in $\gtrsim$ is independent of $\disc$. This, the choice $\Phi_\disc=\Theta_\disc$ in \eqref{linearised.HS}, triangle inequality with $\nabla_\disc \Theta_\disc$, \eqref{sup.ED.est.omega1}, $\bL^4(\Omega) \hookrightarrow \bL^2(\Omega)$, and \eqref{def.CD} show
	\be\label{ineq.HD} 
	\norm{\Theta_\disc}{\disc}\lesssim \big(C_\disc\norm{\hessian\Psi}{0,4}+\norm{\nabla\Psi}{0,\infty}\big)\norm{\nabla_\disc\Theta_\disc}{}+(\omega(E_\disc)+C_\disc)\norm{G}{-1}.
	\ee
The triangle inequality and \eqref{sup.ED.est.omega1} provide an estimate for  $\norm{\nabla_\disc \Theta_\disc}{}$ in the above expression as
	\begin{align}
	\norm{\nabla_\disc \Theta_\disc}{} \le{}& \norm{\nabla_\disc \Theta_\disc-\nabla E_\disc \Theta_\disc}{}+\norm{\nabla E_\disc \Theta_\disc}{}\le  \omega(E_\disc) \norm{\Theta_\disc}{\disc}+\norm{\nabla E_\disc \Theta_\disc}{}.
	\label{ineq.nablaED}
	\end{align}
	To estimate $\norm{\nabla E_\disc \Theta_\disc}{}$, choose $Q=-\Delta E_\disc\Theta_\disc$ and $\Phi=E_\disc \Theta_\disc$ in  \eqref{linearised.dual.problem}. An introduction of the terms $\pm\mathcal{B}(\hessian \Psi, \nabla E_\disc \Theta_\disc, \nabla_\disc P_\disc\zeta)$ and $\pm\mathcal{B}(\hessian E_\disc \Theta_\disc, \nabla \Psi, \nabla_\disc P_\disc\zeta),$ and \eqref{linearised.HS} with $\Phi_\disc=P_\disc \zeta$ yield
	\begin{align*}
	\norm{\nabla E_\disc \Theta_\disc}{}^2
	={}&{\mathbb{A}}_\Psi(E_\disc \Theta_\disc,\zeta)
	=\mathcal{A}(\hessian E_\disc \Theta_\disc,\hessian \zeta)+\mathcal{B}(\hessian \Psi, \nabla E_\disc \Theta_\disc, \nabla \zeta-\nabla_\disc P_\disc\zeta)\\
	& \quad +\mathcal{B}(\hessian E_\disc \Theta_\disc, \nabla \Psi, \nabla \zeta-\nabla_\disc P_\disc \zeta) +\mathcal{B}(\hessian \Psi, \nabla E_\disc \Theta_\disc, \nabla_\disc P_\disc\zeta)\\
	& \quad 
	+\mathcal{B}(\hessian E_\disc \Theta_\disc, \nabla \Psi, \nabla_\disc P_\disc\zeta)-{\mathbb{A}}_{\disc, \Psi}( \Theta_\disc, P_\disc\zeta)+\langle G, E_\disc P_\disc \zeta\rangle_{-1,1}.
	\end{align*}
	An introduction of $\pm\mathcal{A}(\hd \Theta_\disc,\hessian \zeta)$ and $\pm\mathcal{B}(\hessian E_\disc \Theta_\disc-\hessian_\disc\Theta_\disc,\nabla \Psi,\nabla\zeta)$, leads to 
	\begin{align}
	\norm{\nabla &E_\disc \Theta_\disc}{}^2	=\mathcal{A}(\hessian E_\disc \Theta_\disc-\hd \Theta_\disc,\hessian \zeta)+\mathcal{A}(\hd \Theta_\disc,\hessian \zeta-\hd P_\disc \zeta)\nonumber\\
	&\quad+\mathcal{B}(\hessian \Psi,\nabla E_\disc \Theta_\disc,\nabla\zeta-\nabla_\disc P_\disc\zeta)+\mathcal{B}(\hessian E_\disc \Theta_\disc,\nabla \Psi,\nabla\zeta-\nabla_\disc P_\disc\zeta)\nonumber\\
	&\quad +\mathcal{B}(\hessian \Psi,\nabla E_\disc \Theta_\disc-\nabla_\disc\Theta_\disc,\nabla_\disc P_\disc\zeta)+\mathcal{B}(\hessian E_\disc \Theta_\disc-\hessian_\disc\Theta_\disc,\nabla \Psi,\nabla\zeta)\nonumber\\
	&\quad+\mathcal{B}(\hessian E_\disc \Theta_\disc-\hessian_\disc\Theta_\disc,\nabla \Psi,\nabla_\disc P_\disc\zeta-\nabla\zeta)+\langle G,E_\disc P_\disc \zeta\rangle_{-1,1}=:\sum_{i=1}^{8}T_i.\label{nablaED}
	\end{align}
	We now estimate each $T_i$ for $i=1,\cdots,8$. The definition of $\mathcal{A}(\cdot,\cdot)$ in \eqref{A.tensor}, an integration by parts, \eqref{def.WDtilde}, Cauchy-Schwarz inequality, \eqref{sup.ED.est.omega1}, and \eqref{WDtilde.sup} imply
	\begin{align}
	T_1&=\mathcal{A}(\hessian E_\disc \Theta_\disc-\hd \Theta_\disc,\hessian \zeta)= \int_\O (\hessian E_\disc \Theta_\disc-\hd \Theta_\disc):\hessian \zeta \dx\nonumber\\
	&\le-\int_\O \nabla E_\disc \Theta_\disc\cdot\mbox{div}(\hessian \zeta)\dx+\int_\O  \nabla_\disc \Theta_\disc\cdot\mbox{div}(\hessian \zeta)\dx+\widetilde{W}_\disc(\hessian \zeta)\norm{\Theta_\disc}{\disc}\nonumber\\
	&\le  \big({\omega(E_\disc) \norm{\mbox{div}(\hessian \zeta)}{}+\gamma_\disc\norm{\hessian \zeta}{1}}{}\big)\norm{\Theta_\disc}{\disc}. \label{t1.est}
	\end{align}
	A use of \eqref{A.tensor}, Cauchy--Schwarz inequality, \eqref{def.SD}, and \eqref{WDtilde.sup} yields
	\be
	T_2=\mathcal{A}(\hd \Theta_\disc,\hessian \zeta-\hd P_\disc \zeta)\le \norm{\Theta_\disc}{\disc}S_\disc(\zeta)\le\alpha_\disc\norm{\Theta_\disc}{\disc} \norm{\zeta}{{3}}. \label{t2.est}
	\ee
	The generalised  H\"older inequality, Sobolev imbedding $H^1(\O)\hookrightarrow L^4(\O)$, \eqref{def.SD}, \eqref{sup.ED.est.Gamma}, and \eqref{WDtilde.sup} reveal
	\begin{align}
	T_3={}\int_\O\hessian \Psi:h(\nabla E_\disc \Theta_\disc,\nabla\zeta-&\nabla_\disc P_\disc\zeta)\d\x
	\lesssim{}\Gamma(E_\disc) \norm{\hessian \Psi}{}\norm{\Theta_\disc}{\disc}S_\disc(\zeta)\nonumber\\
	\lesssim {}&  \alpha_\disc\Gamma(E_\disc) \norm{\hessian \Psi}{}\norm{\Theta_\disc}{\disc}\norm{\zeta}{{3}}, \label{t3.est}
	\end{align}
	\begin{align}
	T_4={}\int_\O \hessian E_\disc \Theta_\disc: h(\nabla \Psi,\nabla\zeta-&\nabla_\disc P_\disc\zeta)\d\x\lesssim{} \Gamma(E_\disc) \norm{ \Theta_\disc}{\disc}S_\disc(\zeta)\norm{\nabla \Psi}{0,4} \nonumber\\
	\lesssim {}&  \alpha_\disc\Gamma(E_\disc) \norm{\Theta_\disc}{\disc}\norm{\Psi}{2}\norm{\zeta}{{3}}, \label{t4.est}
	\end{align}
	and, since $\norm{\nabla_\disc P_\disc\zeta}{0,4}\le S_\disc(\zeta)+\norm{\nabla \zeta}{0,4}$,
	\begin{align}
	T_5&
	=\int_\O\hessian \Psi:h(\nabla E_\disc \Theta_\disc-\nabla_\disc\Theta_\disc,\nabla_\disc P_\disc\zeta)\d\x \nonumber\\&\lesssim  \omega(E_\disc)\norm{\hessian \Psi}{0,4} \norm{ \Theta_\disc}{\disc}\big(S_\disc(\zeta)+\norm{\nabla \zeta}{0,4}\big)\nonumber\\
	&\lesssim \omega(E_\disc)\big(\alpha_\disc+1\big)\norm{ \Psi}{3} \norm{ \Theta_\disc}{\disc} \norm{ \zeta}{{3}}. \label{t5.est}
	\end{align}	
	A use of \eqref{B.tensor}, integration by parts, \eqref{def.WDtilde},  Cauchy-Schwarz inequality, \eqref{sup.ED.est.omega1}, and \eqref{WDtilde.sup} leads to
	\begin{align}
	T_6
	&=\int_{\O}\left(\hessian E_\disc \Theta_\disc-\hessian_\disc\Theta_\disc\right):h(\nabla \Psi,\nabla\zeta)\dx\nonumber\\
	&\le\int_{\O}(\nabla_\disc \Theta_\disc-\nabla E_\disc \Theta_\disc)\cdot\mbox{div}(h(\nabla \Psi,\nabla\zeta))\dx+\widetilde{W}_\disc(h(\nabla \Psi,\nabla\zeta))\norm{ \Theta_\disc}{\disc}\nonumber\\
	&\le \left(\omega(E_\disc)\norm{\mbox{div}(h(\nabla \Psi,\nabla\zeta))}{}+\widetilde{W}_\disc(h(\nabla \Psi,\nabla\zeta))\right)\norm{\Theta_\disc}{\disc}\nonumber\\
	&\lesssim (\omega(E_\disc)+\gamma_\disc) \norm{ \Theta_\disc}{\disc}\norm{\Psi}{2,4}\norm{ \zeta}{2,4}\lesssim (\omega(E_\disc)+\gamma_\disc) \norm{ \Theta_\disc}{\disc}\norm{\Psi}{3}\norm{ \zeta}{3}.\label{t6.est}
	\end{align}
The generalised  H\"older inequality, \eqref{sup.ED.est.Gamma}, \eqref{def.SD}, \eqref{WDtilde.sup}, and Sobolev imbedding $H^1(\O)\hookrightarrow L^4(\O)$ show
	\begin{align}
	T_7&=\int_\O (\hessian E_\disc \Theta_\disc-\hessian_\disc\Theta_\disc):h(\nabla \Psi,\nabla_\disc P_\disc\zeta-\nabla\zeta)\d\x\nonumber\\& \lesssim (\Gamma(E_\disc)+1)\norm{ \Theta_\disc}{\disc}S_\disc(\zeta)\norm{\nabla \Psi}{0,4} \lesssim  \alpha_\disc(\Gamma(E_\disc)+1)\norm{ \Theta_\disc}{\disc}\norm{\Psi}{2}\norm{\zeta}{{3}}. \label{t7.est}
	\end{align}
The definition of $\|\cdot \|_{-1}$, triangle inequalities, \eqref{def.SD}, \eqref{sup.ED.est.omega1}, and \eqref{WDtilde.sup} imply
	\begin{align}
	T_8&=\langle G,E_\disc P_\disc \zeta\rangle \lesssim \|G\|_{-1}\|\nabla E_\disc P_\disc \zeta\|\nonumber \\
&\le  \|G\|_{-1}(\|\nabla E_\disc P_\disc \zeta-\nabla_\disc P_\disc \zeta\|+\|\nabla_\disc P_\disc \zeta-\nabla \zeta\|+\|\nabla \zeta\|)\nonumber\\
&\lesssim  \|G\|_{-1}\left(\omega(E_\disc)\|P_\disc \zeta\|_\disc+S_\disc(\zeta)+\|\nabla \zeta\|\right)\nonumber\\
&\le  \|G\|_{-1}\left(\omega(E_\disc)(\|\hessian_\disc P_\disc \zeta-\hessian \zeta\|+\|\hessian \zeta\|)+S_\disc(\zeta)+\|\nabla \zeta\|\right)\nonumber\\
&\le  \norm{G}{-1}\left( \omega(E_\disc)(\alpha_\disc+1)+\alpha_\disc+1\right)\|\zeta\|_3.\label{t8.est}
	\end{align}
A substitution of \eqref{t1.est}--\eqref{t8.est} into \eqref{nablaED}, and a use of \eqref{dual.est} and $\norm{Q}{-1} \le \norm{\nabla E_\disc \Theta_\disc}{}$ shows
	\begin{equation}
	\begin{aligned}
	\norm{\nabla E_\disc \Theta_\disc}{}{} \lesssim & \norm{ \Theta_\disc}{\disc}\big(\omega(E_\disc)+\gamma_\disc+\alpha_\disc(1+\Gamma(E_\disc)+\omega(E_\disc))\big) \nonumber\\
&\qquad +\norm{G}{-1}\left( \omega(E_\disc)(\alpha_\disc+1)+\alpha_\disc+1\right),\nonumber 
	\end{aligned}
	\end{equation}	
	where the constant in $\lesssim$ is independent of $\disc$, but depends on $\Psi$. A combination of this, \eqref{ineq.HD}, and \eqref{ineq.nablaED} yields a positive constant $C$ independent of $\disc$ such that
	\begin{equation}\label{Est.HD.Theta.final}
	\begin{aligned}
	\norm{\Theta_\disc}{\disc}\le{}& C(C_\disc+1)\big[\omega(E_\disc)+\gamma_\disc+\alpha_\disc(1+\Gamma(E_\disc)+\omega(E_\disc))\big]\norm{\Theta_\disc}{\disc}\\
	&\quad  \quad +C\left( \omega(E_\disc)(\alpha_\disc+1)+\alpha_\disc+1+C_\disc \right)\norm{G}{-1}.
	\end{aligned}
	\end{equation}
	For $C_\disc\le \Gamma$ and $\Gamma(E_\disc) \le \Gamma$, choose $\rho$ such that 
	$$C\rho(\Gamma+1)(3+\Gamma+\rho) \le 1/2.$$
	If   $\omega(E_\disc)\le \rho$, $\alpha_\disc \le \rho$, and $\gamma_\disc \le \rho$, then the estimate \eqref{Est.HD.Theta.final} gives $\norm{\Theta_\disc}{\disc}\le2C(\rho(\rho+2)+1+\Gamma)\norm{G}{-1}$, which is the sought {a priori} estimate on the solution to the linearized problem \eqref{linearised.HS}.
\end{proof}

\subsection{Existence and error estimates}\label{sec.existence}
The following lemma establishes that the perturbed linearized bilinear form constructed using $P_\disc(\cdot)$ in \eqref{def:PD} defines an isomorphism provided that $\Psi$ is a regular solution of the continuous problem.

\begin{lemma}[Discrete inf-sup condition of perturbed bilinear form] \label{lemma.discrete.pert}
	Under the assumptions of Theorem \ref{thm:discrete.lin}, and $\rho$ further satisfies $\displaystyle \rho \norm{\Psi}{{3}}\le \frac{\hat{\beta}}{2\|\cB\| C_\disc(1+C_\disc)}$, the perturbed bilinear form defined by
	\begin{align}
	\mathfrak{A}_{\disc,\Psi}(\Theta_\disc, \Phi_\disc)={}&\mathcal{A}(\hd \Theta_\disc,\hd \Phi_\disc)+\mathcal{B}(\hd P_\disc \Psi, \nabla_\disc \Theta_\disc, \nabla_\disc \Phi_\disc)\nonumber \\
	&\qquad +\mathcal{B}(\hd \Theta_\disc, \nabla_\disc P_\disc \Psi, \nabla_\disc \Phi_\disc)\label{perturbed.form}
	\end{align}
satisifies a discrete inf-sup condition on $\bfXd \times \bfXd$.
\end{lemma}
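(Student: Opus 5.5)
The plan is to treat $\mathfrak{A}_{\disc,\Psi}$ as a small perturbation of $\mathbb{A}_{\disc,\Psi}$. The latter satisfies the discrete inf-sup condition \eqref{def.betabar} with constant $\hat\beta$ under the hypotheses of Theorem~\ref{thm:discrete.lin}. So it suffices to show that, for all $\Theta_\disc,\Phi_\disc\in\bfXd$,
\[
|\mathfrak{A}_{\disc,\Psi}(\Theta_\disc,\Phi_\disc)-\mathbb{A}_{\disc,\Psi}(\Theta_\disc,\Phi_\disc)|\le \frac{\hat\beta}{2}\norm{\Theta_\disc}{\disc}\norm{\Phi_\disc}{\disc}.
\]
Given this, for $\norm{\Theta_\disc}{\disc}=1$ we take the $\Phi_\disc$ nearly realising the supremum in \eqref{def.betabar}. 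This gives
\[
\sup_{\norm{\Phi_\disc}{\disc}=1}\mathfrak{A}_{\disc,\Psi}(\Theta_\disc,\Phi_\disc)\ge \hat\beta-\hat\beta/2=\hat\beta/2,
\]
which is the required discrete inf-sup condition. Since $\bfXd$ is finite dimensional, this single condition also gives invertibility.

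By bilinearity and \eqref{discrete.linear}, \eqref{perturbed.form}, the difference splits into two terms:
\[
\mathcal{B}(\hd P_\disc\Psi-\hessian\Psi,\nabla_\disc\Theta_\disc,\nabla_\disc\Phi_\disc)+\mathcal{B}(\hd\Theta_\disc,\nabla_\disc P_\disc\Psi-\nabla\Psi,\nabla_\disc\Phi_\disc).
\]
We bound each one using the continuity of $\mathcal{B}$ on $\bL^2\times\bL^4\times\bL^4$ with norm $\norm{\cB}{}$, together with \eqref{def.CD}, which gives $\norm{\nabla_\disc w_\disc}{0,4}\le C_\disc\norm{w_\disc}{\disc}$.

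The first term is at most $\norm{\cB}{}\norm{\hd P_\disc\Psi-\hessian\Psi}{}\,C_\disc^2\norm{\Theta_\disc}{\disc}\norm{\Phi_\disc}{\disc}$. The second is at most $\norm{\cB}{}\norm{\Theta_\disc}{\disc}\norm{\nabla_\disc P_\disc\Psi-\nabla\Psi}{0,4}C_\disc\norm{\Phi_\disc}{\disc}$. By the definition \eqref{def:PD} of $P_\disc$ as the minimiser in $S_\disc(\Psi)$, both interpolation errors are bounded by $S_\disc(\Psi)\le\alpha_\disc\norm{\Psi}{3}\le\rho\norm{\Psi}{3}$. This uses $\Psi\in\bH^3(\O)\cap\bfX$, with $S_\disc$ and $\alpha_\disc$ applied componentwise.

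Adding the two bounds, the difference is at most $\norm{\cB}{}C_\disc(1+C_\disc)\rho\norm{\Psi}{3}\norm{\Theta_\disc}{\disc}\norm{\Phi_\disc}{\disc}$. By the assumed smallness of $\rho\norm{\Psi}{3}$, this is at most $\frac{\hat\beta}{2}\norm{\Theta_\disc}{\disc}\norm{\Phi_\disc}{\disc}$, which is exactly the perturbation bound needed above.

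I expect no real obstacle; the argument is a routine perturbation estimate. The one point needing care is that the bound on $\mathcal{B}$ in the first term requires $L^4$ control of both gradients. That control is precisely what $C_\disc$ provides. The second term needs only one factor $C_\disc$, which is why the threshold contains $C_\disc(1+C_\disc)$ and not $C_\disc^2$ alone. There is also a componentwise constant (depending on $k$) in passing from scalar $S_\disc$ to vector-valued $\Psi$, which I will absorb into $\norm{\Psi}{3}$ or into the threshold.
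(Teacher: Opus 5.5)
Your proposal is correct and is essentially the paper's proof. You use the same splitting of $\mathfrak{A}_{\disc,\Psi}-\mathbb{A}_{\disc,\Psi}$ into two $\mathcal{B}$-terms, the same bound $\norm{\cB}{}C_\disc(1+C_\disc)S_\disc(\Psi)\le \norm{\cB}{}C_\disc(1+C_\disc)\alpha_\disc\norm{\Psi}{3}$, and you obtain the same inf-sup constant $\hat\beta/2$. Your remark about the $k$-dependent constant when applying the scalar quantity $\alpha_\disc$ componentwise to the vector-valued $\Psi$ is fair; the paper passes over this point silently.
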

\begin{proof}
	Let $\Theta_\disc \in \bfXd$. Theorem \ref{thm:discrete.lin} shows the existence of some $\Phi_\disc \in \bfXd$ such that $\norm{\Phi_\disc}{\disc}=1$ and $\hat{\beta} \norm{\Theta_\disc}{\disc} \le {\mathbb{A}}_{\disc, \Psi}(\Theta_\disc,\Phi_\disc)$. This, \eqref{perturbed.form}, \eqref{discrete.linear}, generalised  H\"older inequality, \eqref{def.CD}, and \eqref{def.SD} show
	\begin{align*}
	\mathfrak{A}_{\disc,\Psi}(\Theta_\disc, \Phi_\disc)
	&={\mathbb{A}}_{\disc, \Psi}(\Theta_\disc,\Phi_\disc)-\mathcal{B}(\hessian \Psi-\hd P_\disc \Psi, \nabla_\disc \Theta_\disc, \nabla_\disc \Phi_\disc)\\
	&\qquad \qquad\quad-\mathcal{B}(\hd \Theta_\disc, \nabla \Psi -\nabla_\disc P_\disc \Psi, \nabla_\disc \Phi_\disc)\\
	&\ge \hat{\beta}\norm{ \Theta_\disc}{\disc}-\|\cB\| C_\disc(1+C_\disc)S_\disc(\Psi)\norm{ \Theta_\disc}{\disc}\ge \frac{\hat{\beta}}{2}\norm{ \Theta_\disc}{\disc},
	\end{align*}
	provided $\displaystyle S_\disc(\Psi)\le \frac{\hat{\beta}}{2\|\cB\| C_\disc(1+C_\disc)}.$ The definition of $\alpha_\disc$ in \eqref{WDtilde.sup} yields $S_\disc(\Psi) \le \alpha_\disc \|\Psi\|_3$. Hence the required result follows, provided that $\rho$ is as in Theorem \ref{thm:discrete.lin} and further satisfies $\displaystyle \rho \norm{\Psi}{{3}}\le \frac{\hat{\beta}}{2\|\cB\| C_\disc(1+C_\disc)}.$
\end{proof}

\medskip

Under the assumptions of Lemma~\ref{lemma.discrete.pert}, define the nonlinear operator $\mu:\bfXd \rightarrow \bfXd$ such that, for $\Theta_\disc\in \bfXd$, $\mu(\Theta_\disc)$ is the unique solution to:
\begin{align}
\mathfrak{A}_{\disc,\Psi}(\mu(\Theta_\disc),&\Phi_\disc)=\mathcal{L}(E_\disc \Phi_\disc)+\mathcal{B}(\hd P_\disc \Psi, \nabla_\disc \Theta_\disc, \nabla_\disc \Phi_\disc)\nonumber \\
&+\mathcal{B}(\hd \Theta_\disc, \nabla_\disc P_\disc \Psi, \nabla_\disc \Phi_\disc)-\mathcal{B}(\hd \Theta_\disc, \nabla_\disc \Theta_\disc, \nabla_\disc \Phi_\disc), \label{def.mu}
\end{align}
for all $\Phi_\disc \in \bfXd$. Observe that any fixed point of $\mu$ is a solution to \eqref{abstract_HS} and that the converse also holds true. Hence, in order to establish the existence of a solution to \eqref{abstract_HS}, we will prove that the mapping $\mu$ has a fixed point. For $R >0$, define
$$B_R(P_\disc \Psi)=\{\Phi_\disc \in \bfXd: \norm{ \Phi_\disc-P_\disc \Psi}{\disc}\le R\}.$$

\begin{theorem}(Mapping of ball into ball) \label{thm.map}
	Let $\Psi \in \bH^3(\O)\cap\bfX$ be  a regular solution to \eqref{abstract_weak}.  Under the assumptions of Lemma~\ref{lemma.discrete.pert} with sufficiently small choice of $\rho$, there exists $\mathcal K>0$ not depending on $\rho$ or $\disc$ such that, setting $R=\mathcal{K}\rho$, $\mu$ maps $B_R(P_\disc\Psi)$ into itself. 
\end{theorem}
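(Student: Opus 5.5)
Fix $\Theta_\disc\in B_R(P_\disc\Psi)$. The aim is to bound $\norm{\mu(\Theta_\disc)-P_\disc\Psi}{\disc}$ through the discrete inf-sup condition of Lemma~\ref{lemma.discrete.pert}. That lemma gives some $\Phi_\disc$ with $\norm{\Phi_\disc}{\disc}=1$ and
\[
\tfrac{\hat\beta}{2}\norm{\mu(\Theta_\disc)-P_\disc\Psi}{\disc}\le \mathfrak{A}_{\disc,\Psi}(\mu(\Theta_\disc)-P_\disc\Psi,\Phi_\disc).
\]
We expand the right-hand side using \eqref{def.mu} and \eqref{perturbed.form}, and then add and subtract the continuous equation \eqref{abstract_weak} tested with $E_\disc\Phi_\disc\in\bfX$. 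Set $e:=\Theta_\disc-P_\disc\Psi$. The $\cB$ terms carrying $P_\disc\Psi$ in both entries cancel against those in $\mathfrak{A}_{\disc,\Psi}(P_\disc\Psi,\Phi_\disc)$, and the quadratic terms combine by trilinearity. This leaves
\[
\mathfrak{A}_{\disc,\Psi}(\mu(\Theta_\disc)-P_\disc\Psi,\Phi_\disc)=\underbrace{\cL(E_\disc\Phi_\disc)-\cA(\hd P_\disc\Psi,\hd\Phi_\disc)-\cB(\hd P_\disc\Psi,\nabla_\disc P_\disc\Psi,\nabla_\disc\Phi_\disc)}_{=:{\rm I}}-\underbrace{\cB(\hd e,\nabla_\disc e,\nabla_\disc\Phi_\disc)}_{=:{\rm II}}.
\]

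\textbf{Consistency term ${\rm I}$.} Substitute $\cL(E_\disc\Phi_\disc)=\cA(\hessian\Psi,\hessian E_\disc\Phi_\disc)+\cB(\hessian\Psi,\nabla\Psi,\nabla E_\disc\Phi_\disc)$. This produces three kinds of terms.
\begin{itemize}
\item Approximation differences such as $\cA(\hessian\Psi-\hd P_\disc\Psi,\hd\Phi_\disc)$ and the analogous $\cB$ differences. These are bounded by $S_\disc(\Psi)\le\alpha_\disc\norm{\Psi}{3}\le\rho\norm{\Psi}{3}$, using Hölder and $C_\disc\le\Gamma$.
\item The conformity defect $\cA(\hessian\Psi,\hessian E_\disc\Phi_\disc-\hd\Phi_\disc)$. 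Exactly as for $T_1$ and $T_6$ in the proof of Theorem~\ref{thm:discrete.lin}, integrate by parts against $\xi=\hessian\Psi\in H^1$. The $E_\disc$ part is exact, and the $\hd$ part is controlled by $\widetilde W_\disc$. The result is a bound by $(\omega(E_\disc)+\gamma_\disc)\norm{\Psi}{3}\lesssim\rho$.
\item The nonlinear defect $\cB(\hessian\Psi,\nabla\Psi,\nabla E_\disc\Phi_\disc-\nabla_\disc\Phi_\disc)$. Since $\hessian\Psi\in L^4$ and $\nabla\Psi\in L^\infty$, this is bounded by $\omega(E_\disc)\norm{\Psi}{3}\le \rho\norm{\Psi}{3}$. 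The $L^2$ norm of $\nabla E_\disc\Phi_\disc-\nabla_\disc\Phi_\disc$ suffices here, with no need for $L^4$.
\end{itemize}
Together these give $|{\rm I}|\le C_1\rho$, where $C_1$ depends on $\Psi$ and $\Gamma$ only. The assumption $\delta(E_\disc)\le\rho$ is not needed here. I expect this step to be the main obstacle: one must arrange the add/subtract so that every defect carries either $S_\disc$, $\omega(E_\disc)$ or $\widetilde W_\disc$ against a sufficiently smooth argument ($\hessian\Psi\in H^1$, $h(\nabla\Psi,\cdot)$).

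\textbf{Quadratic term ${\rm II}$ and choice of $\mathcal K$.} By continuity of $\cB$ and \eqref{def.CD}, $|{\rm II}|\le\norm{\cB}{}C_\disc^2\norm{e}{\disc}^2\le\norm{\cB}{}\Gamma^2R^2$. Hence
\[
\norm{\mu(\Theta_\disc)-P_\disc\Psi}{\disc}\le \tfrac{2}{\hat\beta}\big(C_1\rho+\norm{\cB}{}\Gamma^2\mathcal K^2\rho^2\big).
\]
Choose $\mathcal K:=4C_1/\hat\beta$. Then take $\rho$ small enough that $\frac{2}{\hat\beta}\norm{\cB}{}\Gamma^2\mathcal K^2\rho\le\mathcal K/2$. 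The right-hand side is then at most $\mathcal K\rho=R$, so $\mu(B_R(P_\disc\Psi))\subset B_R(P_\disc\Psi)$. The constant $\mathcal K$ depends only on $\hat\beta$, $\Gamma$ and $\Psi$, and not on $\rho$ or $\disc$.
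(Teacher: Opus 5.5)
Your proposal is correct and follows essentially the same route as the paper. Your term ${\rm I}$ is exactly the paper's $B_1+B_2$, bounded by the same $S_\disc$ estimates, the same $T_1$-type integration by parts with $\widetilde W_\disc$ and $\omega(E_\disc)$, and H\"older on the nonlinear defect; your ${\rm II}$ is $-B_3$, bounded by $\norm{\cB}{}C_\disc^2\norm{\Theta_\disc-P_\disc\Psi}{\disc}^2$, and the only difference is in the closing bookkeeping, where your $\mathcal K=4C_1/\hat\beta$ is cleanly independent of $\rho$ (the paper's $R=2\mathcal K_1\rho(\Gamma+3+\Gamma\rho)$ carries a harmless $\rho$-dependence).
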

\begin{proof}
Since $\mu(\Theta_\disc)- P_\disc \Psi \in \bfXd$, the discrete inf-sup condition of $\mathfrak{A}_{\disc,\Psi}(\cdot,\cdot)$ in Lemma~\ref{lemma.discrete.pert} shows that there exists $\Phi_\disc \in \bfXd$ such that $\norm{ \Phi_\disc}{\disc}=1$ and
	\be \label{non.sing}
	\frac{\hat{\beta}}{2}\norm{ \mu(\Theta_\disc)- P_\disc \Psi}{\disc} \le \mathfrak{A}_{\disc,\Psi}(\mu(\Theta_\disc)-P_\disc \Psi, \Phi_\disc).
	\ee
	A use of \eqref{def.mu}, \eqref{perturbed.form}, and \eqref{abstract_weak} with $\Phi=E_\disc \Phi_\disc$ yields
	\begin{align}
	\mathfrak{A}_{\disc,\Psi}{}(\mu(\Theta_\disc)-P_\disc \Psi, \Phi_\disc)={}&\mathcal{L}(E_\disc \Phi_\disc)+\mathcal{B}(\hd P_\disc \Psi, \nabla_\disc \Theta_\disc, \nabla_\disc \Phi_\disc)\nonumber\\
	&+\mathcal{B}(\hd \Theta_\disc, \nabla_\disc P_\disc \Psi, \nabla_\disc \Phi_\disc)-\mathcal{B}(\hd \Theta_\disc, \nabla_\disc \Theta_\disc, \nabla_\disc \Phi_\disc)\nonumber\\
	&-\mathcal{A}(\hd P_\disc \Psi,\hd \Phi_\disc)-2\mathcal{B}(\hd P_\disc \Psi, \nabla_\disc P_\disc\Psi, \nabla_\disc \Phi_\disc)\nonumber\\
	={}&\left[\mathcal{A}(\hessian \Psi,\hessian E_\disc \Phi_\disc)-\mathcal{A}(\hd P_\disc \Psi,\hd \Phi_\disc)\right]\nonumber\\
	&+\left[\mathcal{B}(\hessian \Psi, \nabla \Psi, \nabla E_\disc \Phi_\disc)-\mathcal{B}(\hd P_\disc \Psi, \nabla_\disc P_\disc\Psi, \nabla_\disc \Phi_\disc)\right]\nonumber\\
	&+\mathcal{B}(\hd P_\disc \Psi-\hd \Theta_\disc, \nabla_\disc \Theta_\disc-\nabla_\disc P_\disc\Psi, \nabla_\disc \Phi_\disc)=:\sum_{i=1}^{3}B_i.\label{est.b1b4}
	\end{align}
An introduction of the term $\hd \Phi_\disc$ and \eqref{t1.est} with the pair $(\Theta_\disc,\zeta)$ replaced by $(\Phi_\disc,\Psi)$, \eqref{def.SD}, and \eqref{WDtilde.sup} lead to
	\begin{align}
	B_1
	&\le\big|\mathcal{A}(\hessian \Psi,\hessian E_\disc \Phi_\disc-\hd \Phi_\disc)\big|+\big|\mathcal{A}(\hessian \Psi-\hd P_\disc \Psi,\hd \Phi_\disc)\big|\nonumber\\
	&	\le  \big(\omega(E_\disc)\norm{\mbox{div}(\hessian \Psi)}{}+\gamma_\disc\norm{\hessian \Psi}{1}+\alpha_\disc\norm{\Psi}{{3}}\big)\norm{\Phi_\disc}{\disc}.\label{est.b2}
	\end{align}
The continuity of $\mathcal{B}(\cdot,\cdot,\cdot)$, \eqref{sup.ED.est.omega1}, \eqref{def.CD}, \eqref{def.SD}, the Sobolev imbedding $H^1(\O)\hookrightarrow L^4(\O)$, and \eqref{WDtilde.sup} provide
	\begin{align}
	B_2&\le \big|\mathcal{B}(\hessian \Psi, \nabla \Psi, \nabla E_\disc \Phi_\disc-\nabla_\disc \Phi_\disc)-\mathcal{B}(\hd P_\disc \Psi-\hessian \Psi, \nabla_\disc P_\disc\Psi, \nabla_\disc \Phi_\disc)\big|\nonumber\\
	&\qquad \qquad + \big|\mathcal{B}(\hessian \Psi, \nabla \Psi-\nabla_\disc P_\disc\Psi, \nabla_\disc \Phi_\disc)\big| \nonumber\\
	&\lesssim  \big[\omega(E_\disc)\norm{\hessian \Psi}{0,4}\norm{\nabla \Psi}{0,4}+\big(\norm{\nabla_\disc P_\disc\Psi}{0,4}+ \norm{\hessian\Psi}{}\big)C_\disc S_\disc(\Psi)\big]\norm{\Phi_\disc}{\disc}\nonumber\\
	&\lesssim  \big[\omega(E_\disc)\norm{\Psi}{3}^2+\big(\alpha_\disc\norm{\Psi}{{3}}+ \norm{\hessian\Psi}{}\big)C_\disc\alpha_\disc\norm{\Psi}{{3}}\big]\norm{\Phi_\disc}{\disc}.\label{est.b3}
	\end{align}
	Finally, the continuity of $\cB(\cdot,\cdot,\cdot)$ and \eqref{def.CD} imply
	\begin{align}
	B_3&\lesssim  C_\disc^2\norm{\Theta_\disc-P_\disc \Psi}{\disc}^2\norm{ \Phi_\disc}{\disc}.\label{est.b4}
	\end{align}
	A substitution of \eqref{est.b2}-\eqref{est.b4} in \eqref{est.b1b4} (together with the fact that $\norm{\Phi_\disc}{\disc}=1$) and then in \eqref{non.sing} shows that
	\begin{align*}
	\norm{\mu(\Theta_\disc)-P_\disc \Psi}{\disc}& \le \mathcal{K}_1\big(\gamma_\disc + \alpha_\disc(C_\disc+1+C_\disc \alpha_\disc)+\omega(E_\disc)  +C_\disc^2\norm{ \Theta_\disc-P_\disc \Psi}{\disc}^2\big),
	\end{align*}
	where $\mathcal{K}_1>0$ is independent of $\disc$,  but depends on $\hat{\beta}$ and $\norm{\Psi}{{3}}$. Given the assumptions of Lemma~\ref{lemma.discrete.pert}, there exists $\Gamma >0$ and $\rho >0$ be such that $C_\disc\le \Gamma$, $\gamma_\disc \le \rho$, $\omega(E_\disc)\le \rho$, and $\alpha_\disc \le \rho$. Then,
	$$\norm{\mu(\Theta_\disc)- P_\disc \Psi}{\disc}\le  \mathcal{K}_1\big(\rho(\Gamma+3+\Gamma\rho)+\Gamma^2\norm{\Theta_\disc-P_\disc \Psi}{\disc}^2\big).$$
	In addition to the choice of $\rho$ in Lemma~\ref{lemma.discrete.pert}, let $\rho>0$ be chosen sufficiently small such that $4\mathcal{K}_1^2\Gamma^2\rho(\Gamma+3+\Gamma\rho)\le 1$. Set $R:=2\mathcal{K}_1\rho(\Gamma+3+\Gamma\rho)$. Then, $\norm{\Theta_\disc- P_\disc \Psi}{\disc} \le R$ implies
	$$	\norm{ \mu(\Theta_\disc)-P_\disc \Psi}{\disc}\le \mathcal{K}_1\rho(\Gamma+3+\Gamma\rho)\big(1+4\mathcal{K}_1^2\Gamma^2\rho(\Gamma+3+\Gamma\rho)\big)\le R.$$
	This completes the proof.
\end{proof}

We can now prove Theorem \ref{thm.err}.	

\begin{proof}[Proof of Theorem \ref{thm.err}]
	
The inf-sup condition on $\mathfrak{A}_{\disc,\Psi}(\cdot,\cdot)$ and the boundedness of the multilinear forms in the finite-dimensional setting show that $\mu$ is continuous. Since $\mu$ maps the ball $B_R(P_\disc \Psi)$ to itself from Theorem \ref{thm.map}, the Brouwer fixed point theorem shows that the mapping $\mu$ has a fixed point in this ball, say $\Psi_\disc$. Hence, $\Psi_\disc$ is a solution of \eqref{abstract_HS} that satisfies $\norm{ \Psi_\disc- P_\disc \Psi}{\disc}\le R$, where $R=\mathcal{K}\rho$. This proves the existence part in Theorem \ref{thm.err}.
	
\medskip

An introduction of $\hd\Psi_\disc$, the definition of $S_\disc$ and \eqref{WDtilde.sup} show that
	\begin{align}
	\norm{\hessian \Psi-\hd\Psi_\disc}{}&\le \norm{\hessian \Psi-\hd P_\disc\Psi}{}+\norm{ P_\disc \Psi-\Psi_\disc}{\disc}\le (\mathcal{K}+1)\rho.\nonumber
	\end{align}
In a similar way, the triangle inequality, \eqref{def.SD}, and \eqref{def.CD} lead to
\begin{align*}
\|\nabla \Psi -\nabla_\disc \Psi_\disc \|&\le \|\nabla \Psi -\nabla_\disc P_\disc \Psi \|+\|\nabla_\disc P_\disc \Psi -\nabla_\disc \Psi_\disc \|\\
&\le |\O|^{1/4}\left(  \|\nabla \Psi -\nabla_\disc P_\disc \Psi \|_{0,4}+\|\nabla_\disc P_\disc \Psi -\nabla_\disc \Psi_\disc \|_{0,4}\right)\\
&\lesssim S_\disc(\Psi)+C_\disc \| P_\disc \Psi - \Psi_\disc \|_{\disc} \lesssim (1+C_\disc \mathcal{K})\rho
\end{align*}
	and
\[\|\Psi -\Pi_\disc \Psi_\disc \|\le \|\Psi -\Pi_\disc P_\disc \Psi \|+\|\Pi_\disc P_\disc \Psi -\Pi_\disc \Psi_\disc \|\lesssim (1+C_\disc \mathcal{K})\rho.\]
\end{proof}
The following lemma establishes the local uniqueness of the solution of \eqref{abstract_HS}.
\begin{lemma}[Local uniqueness]\label{thm.contraction}
	Let $\O$ be a convex domain and $\Psi \in \bH^3(\Omega)\cap \bfX$ be a regular solution to \eqref{abstract_weak}. For $\Theta_\disc^1, \Theta_\disc^2 \in B_R(P_\disc \Psi)$ with $R$ as defined as in Theorem \ref{thm.map}, the following result holds true.
	$$\norm{\mu( \Theta_\disc^1)-\mu(\Theta_\disc^2)}{\disc}\lesssim R\norm{ \Theta_\disc^1-\Theta_\disc^2}{\disc},$$
	where the constant in $\lesssim$ is independent of $\disc$, but depends on $\Gamma$ and $\hat{\beta}$. Hence, if $\rho$ is small enough, $\mu$ is a contraction on $B_R(P_\disc \Psi)$ and the solution to \eqref{abstract_HS} in this ball is unique.
\end{lemma}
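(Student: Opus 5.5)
The plan is to use the discrete inf-sup condition of Lemma~\ref{lemma.discrete.pert} for the perturbed form $\mathfrak{A}_{\disc,\Psi}$, applied to the difference $\mu(\Theta_\disc^1)-\mu(\Theta_\disc^2)$. Subtracting the defining relations \eqref{def.mu} for $\Theta_\disc^1$ and $\Theta_\disc^2$ removes the source term $\mathcal{L}(E_\disc\Phi_\disc)$, so no companion-operator quantity enters. For all $\Phi_\disc\in\bfXd$, trilinearity of $\mathcal{B}$ then gives
\begin{align*}
\mathfrak{A}_{\disc,\Psi}(\mu(\Theta_\disc^1)-\mu(\Theta_\disc^2),\Phi_\disc)={}&\mathcal{B}(\hd P_\disc\Psi,\nabla_\disc(\Theta_\disc^1-\Theta_\disc^2),\nabla_\disc\Phi_\disc)\\
&+\mathcal{B}(\hd(\Theta_\disc^1-\Theta_\disc^2),\nabla_\disc P_\disc\Psi,\nabla_\disc\Phi_\disc)\\
&-\mathcal{B}(\hd\Theta_\disc^1,\nabla_\disc\Theta_\disc^1,\nabla_\disc\Phi_\disc)+\mathcal{B}(\hd\Theta_\disc^2,\nabla_\disc\Theta_\disc^2,\nabla_\disc\Phi_\disc).
\end{align*}

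The key algebraic step is to rewrite the last two terms by splitting $\Theta_\disc^i=(\Theta_\disc^i-P_\disc\Psi)+P_\disc\Psi$ and expanding with trilinearity. The parts that are linear in $P_\disc\Psi$ cancel exactly against the first two terms. What remains is
\[
-\mathcal{B}(\hd(\Theta_\disc^1-P_\disc\Psi),\nabla_\disc(\Theta_\disc^1-\Theta_\disc^2),\nabla_\disc\Phi_\disc)-\mathcal{B}(\hd(\Theta_\disc^1-\Theta_\disc^2),\nabla_\disc(\Theta_\disc^2-P_\disc\Psi),\nabla_\disc\Phi_\disc).
\]
This can be checked directly by writing $a=\Theta^1-P$, $b=\Theta^2-P$: the quadratic remainder is $-\mathcal{B}(a,a)+\mathcal{B}(b,b)=-\mathcal{B}(a,a-b)-\mathcal{B}(a-b,b)$. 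This cancellation is the main point, because it guarantees that every surviving term contains a factor of size $R$. I expect it to be the only real obstacle; the rest is bookkeeping.

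Next, bound each remaining term using the continuity of $\mathcal{B}$ and the definition \eqref{def.CD} of $C_\disc$:
\[
\lesssim \|\cB\|\, C_\disc^2\big(\|\Theta_\disc^1-P_\disc\Psi\|_\disc+\|\Theta_\disc^2-P_\disc\Psi\|_\disc\big)\|\Theta_\disc^1-\Theta_\disc^2\|_\disc\|\Phi_\disc\|_\disc\le 2\|\cB\|\Gamma^2R\,\|\Theta_\disc^1-\Theta_\disc^2\|_\disc\|\Phi_\disc\|_\disc .
\]
Lemma~\ref{lemma.discrete.pert} supplies a $\Phi_\disc$ with $\|\Phi_\disc\|_\disc=1$ such that $\frac{\hat\beta}{2}\|\mu(\Theta_\disc^1)-\mu(\Theta_\disc^2)\|_\disc$ is bounded by the left-hand side above. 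Combining the two gives the claimed estimate, with a constant $4\|\cB\|\Gamma^2/\hat\beta$ that depends on $\Gamma$ and $\hat\beta$ only.

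For the conclusion, recall that $R=\mathcal{K}\rho$ from Theorem~\ref{thm.map}. Shrinking $\rho$ further so that $4\|\cB\|\Gamma^2\mathcal{K}\rho/\hat\beta<1$ makes $\mu$ a contraction on $B_R(P_\disc\Psi)$, and Theorem~\ref{thm.map} already shows that $\mu$ maps this ball into itself. Every solution of \eqref{abstract_HS} lying in the ball is a fixed point of $\mu$, and a contraction has at most one fixed point, so the solution in $B_R(P_\disc\Psi)$ is unique.
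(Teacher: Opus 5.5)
Your proof is correct and follows the same route as the paper. You subtract the two relations \eqref{def.mu}, reduce the right-hand side to the quadratic remainder $-\mathcal{B}(a,a)+\mathcal{B}(b,b)$ with $a=\Theta_\disc^1-P_\disc\Psi$, $b=\Theta_\disc^2-P_\disc\Psi$ (your regrouping is only a cosmetic variant of the paper's), bound it using the continuity of $\mathcal{B}$ and $C_\disc\le\Gamma$, and conclude with the discrete inf-sup condition of Lemma~\ref{lemma.discrete.pert}. Using the constant $\hat{\beta}/2$ from that lemma is slightly more accurate than the paper's $\hat{\beta}$, and your explicit fixed-point argument spells out the uniqueness conclusion that the paper leaves implicit.
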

\begin{proof}
	For $i=1,2$ and $\Theta_\disc^i \in B_R(P_\disc \Psi)$, let $\mu(\Theta_\disc^i)$ be the solution to
\begin{align}
	\mathfrak{A}_{\disc,\Psi}(\mu(\Theta_\disc^i),\Phi_\disc)={}&	\mathcal{L}(E_\disc \Phi_\disc)+\mathcal{B}(\hd P_\disc \Psi, \nabla_\disc \Theta_\disc^i, \nabla_\disc \Phi_\disc) \nonumber \\
	&+\mathcal{B}(\hd \Theta_\disc^i, \nabla_\disc P_\disc \Psi, \nabla_\disc \Phi_\disc-\mathcal{B}(\hd \Theta_\disc^i,\nabla_\disc \Theta_\disc^i,\nabla_\disc \Phi_\disc). \label{contraction}
	\end{align}
	The discrete inf-sup condition of $\mathfrak{A}_{\disc,\Psi}(\cdot,\cdot)$, \eqref{contraction}, the continuity of $\mathcal{B}(\cdot,\cdot,\cdot)$, and \eqref{def.CD} leads to the existence of $\Phi_\disc\in \bfXd$ such that $\norm{\Phi_\disc}{\disc}=1$ and
	\begin{align}
	\hat{\beta}\norm{\mu( \Theta_\disc^1)-\mu(\Theta_\disc^2)}{\disc}&\le 	\mathfrak{A}_{\disc,\Psi}(\mu(\Theta_\disc^1)-\mu(\Theta_\disc^2),\Phi_\disc)\nonumber\\
	&=\mathcal{B}(\hd \Theta_\disc^2-\hd \Theta_\disc^1, \nabla_\disc \Theta_\disc^1-\nabla_\disc P_\disc \Psi, \nabla_\disc \Phi_\disc)\nonumber\\
	&\quad+\mathcal{B}( \hd \Theta_\disc^2-\hd P_\disc \Psi,\nabla_\disc \Theta_\disc^2-\nabla_\disc \Theta_\disc^1,\nabla_\disc \Phi_\disc)\nonumber\\
	&\lesssim C_\disc^2\norm{ \Theta_\disc^2-\Theta_\disc^1}{\disc}\Big(\norm{ \Theta_\disc^1-P_\disc \Psi}{\disc}+\norm{ \Theta_\disc^2- P_\disc \Psi}{\disc}\Big).\nonumber
	\end{align}
	Since $\Theta_\disc^1, \Theta_\disc^2 \in B_R(P_\disc \Psi)$ and $C_\disc \le \Gamma$, for a choice of $R$ as in the proof of Theorem \ref{thm.map}, we obtain
	$$\norm{\hd\mu( \Theta_\disc^1)-\hd\mu(\Theta_\disc^2)}{}\le C R\norm{\hd \Theta_\disc^1-\hd \Theta_\disc^2}{},$$
	where $C$ depends only on $\Gamma$ and $\hat{\beta}$. This completes the proof.
\end{proof}
\subsection{Convergence of Newton's Method}\label{sec.Newton}
This section establishes the quadratic convergence of Newton’s method, following an approach similar to that in \cite{ng2}, within the HDM framework.
\medskip

The discrete solution is computed using Newton's method. The iterates $\Psi_\disc^j$ are defined by
\begin{align}\label{eqn.Newton}
\cA(\hessian_\disc \Psi_\disc^j,\hessian_\disc \Phi_\disc)+&\cB(\hessian_\disc \Psi_\disc^{j-1},\nabla_\disc \Psi_\disc^j,\nabla_\disc \Phi_\disc)+\cB(\hessian_\disc \Psi_\disc^{j},\nabla_\disc \Psi_\disc^{j-1},\nabla_\disc \Phi_\disc) \nonumber\\
&=\cL(E_\disc \Phi_\disc)+\cB(\hessian_\disc \Psi_\disc^{j-1},\nabla_\disc \Psi_\disc^{j-1},\nabla_\disc \Phi_\disc) \fl \Phi_\disc \in \bfXd.
\end{align}

\begin{theorem}[Convergence of Newtons method]
Let $\Omega$ be a convex domain and let $\Psi \in \bH^3(\Omega)\cap \bfX$ be a regular solution to \eqref{abstract_weak}. Let $\Psi_\disc$ solves \eqref{abstract_HS}. Then there exists a constant $\eta$, independent of $\rho$, such that for any initial guess $\Psi_\disc^0$ satisfying $\|\Psi_\disc - \Psi_\disc^0\|_\disc \le \eta$, it holds that $\|\Psi_\disc-\Psi_\disc^j\|_\disc \le \eta$ for all $j=0,1,2,\ldots$. Moreover, the Newton iterates defined in \eqref{eqn.Newton} are well-defined and converge quadratically to $\Psi_\disc$.
\end{theorem}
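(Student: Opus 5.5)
We prove the result by the standard Newton--Kantorovich-type argument of \cite{ng2}, transplanted to the HDM setting. The iterate $\Psi_\disc^j$ in \eqref{eqn.Newton} solves a linear problem whose bilinear form is
\[
\mathbb{A}^{j-1}(\Theta_\disc,\Phi_\disc):=\cA(\hd\Theta_\disc,\hd\Phi_\disc)+\cB(\hd\Psi_\disc^{j-1},\nabla_\disc\Theta_\disc,\nabla_\disc\Phi_\disc)+\cB(\hd\Theta_\disc,\nabla_\disc\Psi_\disc^{j-1},\nabla_\disc\Phi_\disc),
\]
that is, the discrete linearization around $\Psi_\disc^{j-1}$.

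\textbf{Step 1: a uniform inf-sup condition.} We first show that $\mathbb{A}^{j-1}$ satisfies a discrete inf-sup condition with constant $\hat\beta/4$ whenever $\|\Psi_\disc^{j-1}-\Psi_\disc\|_\disc\le\eta$. Start from the perturbed form $\mathfrak{A}_{\disc,\Psi}$ of Lemma~\ref{lemma.discrete.pert}, which has inf-sup constant $\hat\beta/2$. Write
\[
\mathbb{A}^{j-1}-\mathfrak{A}_{\disc,\Psi}=\cB(\hd(\Psi_\disc^{j-1}-P_\disc\Psi),\nabla_\disc\cdot,\nabla_\disc\cdot)+\cB(\hd\cdot,\nabla_\disc(\Psi_\disc^{j-1}-P_\disc\Psi),\nabla_\disc\cdot).
\]
By continuity of $\cB$ and \eqref{def.CD}, this difference is bounded by $2\|\cB\|C_\disc^2\|\Psi_\disc^{j-1}-P_\disc\Psi\|_\disc$. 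Theorem~\ref{thm.err} gives $\|\Psi_\disc-P_\disc\Psi\|_\disc\le R=\mathcal K\rho$, so the difference is at most $2\|\cB\|\Gamma^2(\eta+\mathcal K\rho)$. Choosing $\eta$ and $\rho$ small, with $\eta$ depending only on $\hat\beta$, $\Gamma$ and $\|\cB\|$, makes this $\le\hat\beta/4$. Hence $\mathbb{A}^{j-1}$ is an isomorphism on the finite dimensional space $\bfXd$, and the iterate $\Psi_\disc^j$ is well defined.

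\textbf{Step 2: the error recursion.} Subtract \eqref{abstract_HS} from \eqref{eqn.Newton}, writing the nonlinear term as $\cB(\hd\Psi_\disc,\nabla_\disc\Psi_\disc,\cdot)$, and set $e^j:=\Psi_\disc-\Psi_\disc^j$. Trilinearity gives the exact identity
\[
\mathbb{A}^{j-1}(e^j,\Phi_\disc)=\cB(\hd e^{j-1},\nabla_\disc e^{j-1},\nabla_\disc\Phi_\disc),
\]
since the difference $\cB(\hd\Psi_\disc,\nabla_\disc\Psi_\disc,\cdot)-\cB(\hd\Psi^{j-1}_\disc,\nabla_\disc\Psi^{j-1}_\disc,\cdot)$, minus its linearization at $\Psi_\disc^{j-1}$, is exactly the quadratic remainder. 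Apply the inf-sup condition from Step 1 and bound the right-hand side by $\|\cB\|C_\disc^2\|e^{j-1}\|_\disc^2$. This yields
\[
\|e^j\|_\disc\le\frac{4\|\cB\|\Gamma^2}{\hat\beta}\,\|e^{j-1}\|_\disc^2.
\]

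\textbf{Step 3: induction and quadratic convergence.} Now shrink $\eta$ further so that $q:=4\|\cB\|\Gamma^2\eta/\hat\beta\le 1/2$. By induction, $\|e^{j-1}\|_\disc\le\eta$ implies $\|e^j\|_\disc\le q\|e^{j-1}\|_\disc\le\eta$. This keeps every iterate in the region where Step 1 applies, so all iterates are well defined and stay within distance $\eta$ of $\Psi_\disc$. The recursion also gives $\|e^j\|_\disc\le q^{2^j-1}\|e^0\|_\disc$, which is quadratic convergence.

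The main obstacle is Step 1: obtaining a stability constant uniform in $\disc$ for the linearization around $\Psi_\disc^{j-1}$ rather than around $\Psi$. This is handled by the perturbation argument relative to $P_\disc\Psi$, which combines Lemma~\ref{lemma.discrete.pert} with the estimate $\|\Psi_\disc-P_\disc\Psi\|_\disc\lesssim\rho$ from Theorem~\ref{thm.err}. The same argument shows that $\eta$ is independent of $\rho$, provided $\rho$ is below the threshold already fixed there.
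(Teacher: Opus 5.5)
Your proposal is correct and follows the paper's proof. Both arguments perturb the inf-sup condition of $\mathfrak{A}_{\disc,\Psi}$ to the linearization at $\Psi_\disc^{j-1}$ with constant $\hat{\beta}/4$, using $\|\Psi_\disc-P_\disc\Psi\|_\disc\lesssim\rho$ from Theorem~\ref{thm.err}; both then use the exact quadratic-remainder identity and close the induction with $\eta\le\hat{\beta}/(8\|\cB\|C_\disc^2)$, which is exactly the paper's choice. The one slip is a harmless sign: the identity should read $\mathbb{A}^{j-1}(e^j,\Phi_\disc)=-\cB(\hd e^{j-1},\nabla_\disc e^{j-1},\nabla_\disc\Phi_\disc)$, which does not change the norm bound.
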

\begin{proof}
Arguments analogous to Lemma~\ref{lemma.discrete.pert} establish a positive constant $\epsilon$ (sufficiently small) independent of $\rho$ such that for each $Z_\disc \in \bfXd$ with $\|Z_\disc -P_\disc \Psi \|_\disc \le \epsilon$, the bilinear form
\begin{equation}\label{eqn.bilinearinfsup}
\cA(\hessian_\disc \cdot, \hessian_\disc \cdot)+\cB(\hessian_\disc Z_\disc, \nabla_\disc \cdot, \nabla_\disc \cdot)+\cB(\hessian_\disc \cdot, \nabla_\disc Z_\disc, \nabla_\disc \cdot)
\end{equation}
satisfies inf-sup condition on $\bfXd \times \bfXd$ with inf-sup constant $\hat{\beta}/4$. 
\smallskip

Theorem~\ref{thm.err} shows $\|P_\disc \Psi-\Psi_\disc\|_\disc \le C\rho$ for  sufficiently small $\rho$. Thus, a choice of $\rho$ small enough ensures $\|P_\disc \Psi-\Psi_\disc\|_\disc \le \epsilon/2$.  Define
\[\eta=\min\{\epsilon/2,\hat{\beta}/(8\|\cB\|C_\disc^2)\}.\]
If the initial guess $\Psi_\disc^0$ satisfies $\|\Psi_\disc-\Psi_\disc^0\|_\disc \le \eta$, then
\[\|P_\disc \Psi-\Psi_\disc^0\|_\disc \le \|P_\disc \Psi-\Psi_\disc \|_\disc +\|\Psi_\disc-\Psi_\disc^0\|_\disc \le \epsilon.\]
The result is established by induction. Assume $\|\Psi_\disc-\Psi_\disc^{j-1}\|_\disc \le \eta$ and $\|P_\disc \Psi-\Psi_\disc^{j-1}\|_\disc \le \epsilon$ holds for some $j \in \mathbb{N}$. Then, the substitution $Z_\disc := \Psi_\disc^{j-1}$ into \eqref{eqn.bilinearinfsup} yields a discrete inf-sup condition. This guarantees a unique solution $\Psi_\disc^j$ at step $j$ of the Newton scheme. The discrete inf-sup condition \eqref{eqn.bilinearinfsup} implies the existence of $\Phi_\disc \in \bfXd$ such that $\|\Phi_\disc\|_\disc=1$ and 
\begin{align*}
\frac{\hat{\beta}}{4}\|\Psi_\disc-\Psi_\disc^j\|_\disc&\le \cA(\hessian_\disc (\Psi_\disc-\Psi_\disc^j), \hessian_\disc \Phi_\disc)+\cB(\hessian_\disc \Psi_\disc^{j-1}, \nabla_\disc (\Psi_\disc-\Psi_\disc^j), \nabla_\disc \Phi_\disc)\\
&\qquad +\cB(\hessian_\disc (\Psi_\disc-\Psi_\disc^j), \nabla_\disc \Psi_\disc^{j-1}, \nabla_\disc \Phi_\disc).
\end{align*}
This, \eqref{eqn.Newton}, \eqref{abstract_weak}, the boundedness of $\cB(\cdot,\cdot,\cdot)$, and \eqref{def.CD} reveal
\begin{align*}
\frac{\hat{\beta}}{4}\|\Psi_\disc-\Psi_\disc^j\|_\disc&\le \cA(\hessian_\disc \Psi_\disc, \hessian_\disc \Phi_\disc)+\cB(\hessian_\disc \Psi_\disc^{j-1}, \nabla_\disc \Psi_\disc, \nabla_\disc \Phi_\disc)\\
&\quad+\cB(\hessian_\disc \Psi_\disc, \nabla_\disc \Psi_\disc^{j-1}, \nabla_\disc \Phi_\disc) -\cB(\hessian_\disc \Psi_\disc^{j-1}, \nabla_\disc \Psi_\disc^{j-1}, \nabla_\disc \Phi_\disc)
\\
&\quad-\cL(E_\disc \Phi_\disc)\\
&=-\cB(\hessian_\disc \Psi_\disc, \nabla_\disc \Psi_\disc, \nabla_\disc \Phi_\disc)+\cB(\hessian_\disc \Psi_\disc^{j-1}, \nabla_\disc \Psi_\disc, \nabla_\disc \Phi_\disc)\\
&\qquad +\cB(\hessian_\disc \Psi_\disc, \nabla_\disc \Psi_\disc^{j-1}, \nabla_\disc \Phi_\disc)-\cB(\hessian_\disc \Psi_\disc^{j-1}, \nabla_\disc \Psi_\disc^{j-1}, \nabla_\disc \Phi_\disc)\\
&=\cB(\hessian_\disc (\Psi_\disc-\Psi_\disc^{j-1}), \nabla_\disc (\Psi_\disc^{j-1}-\Psi_\disc), \nabla_\disc \Phi_\disc)\le \|\cB\|C_\disc^2\|\Psi_\disc-\Psi_\disc^{j-1}\|_\disc^2.
\end{align*}
This results in
\[\|\Psi_\disc-\Psi_\disc^j\|_\disc\le (4\|\cB\|C_\disc^2/\hat{\beta})\|\|\Psi_\disc-\Psi_\disc^{j-1}\|_\disc^2\]
and establishes the quadratic convergence of the Newton method to $\Psi_\disc$. 

\smallskip

 Since $\|\Psi_\disc-\Psi_\disc^{j-1}\|_\disc \le \eta \le \hat{\beta}/(8\|\cB\|C_\disc^2)$, it follows that
\[\|\Psi_\disc-\Psi_\disc^j\|_\disc\le \frac{1}{2}\|\Psi_\disc-\Psi_\disc^{j-1}\|_\disc <\eta\]
and this completes the induction.
\end{proof}

\begin{remark}
The numerical experiments presented in Section \ref{sec:numericalresults} specifically consider data in $\mathbf{L}^2(\Omega)$, where the right-hand side of \eqref{abstract_weak} is substituted with $\mathcal{L}(\Pi_\disc \Phi_\disc)$, see Remark~\ref{rem.L2}. While convergence for data in $\mathbf{H}^{-1}(\Omega)$ is theoretically provable, the numerical construction of the corresponding companion operator is less established and is an ongoing research.
\end{remark}
\section{Examples and Applications}\label{sec:HDMegandapplications}
This section covers HDM examples and the application of the abstract model to the 2D incompressible stream function vorticity formulation of Navier--Stokes (NS) equation and von K\'{a}rm\'{a}n (vK) equations. The corresponding convergence results, specifying the rates of convergence for these applications, are stated at the end of the section.

\subsection{Examples of HDM}\label{sec:HDMeg}

This section deals with examples of schemes that fit into the HDM framework. Let us begin with mesh notation \cite{gdm}.

\smallskip

For any $\cell\in\mesh$, the center of mass of $K$ is defined by $\overline{\x}_\cell$, $|\cell|>0$ denote the measure of $\cell$,  and $n_K$ be the outer unit normal to $K$. Let $\edges$ be the set of all edges of the mesh and the measure of $\edge\in\edges$ be denoted by $|\edge|$. Let the set of vertices in $\mesh$ be denoted by $\vertices$. Let $\mathcal V_{\rm int}$ (resp. $\mathcal V_{\rm ext}$) denote the set of internal vertices of $\mesh$ (resp. vertices on $\partial \O$). The meshes are assumed to be regular \cite{ciarlet1978finite} in the classical sense that the ratio of the diameter and the radius of the largest ball centered at $\overline{\x}_\cell$ and included in $\cell$ is uniformly bounded with a bound independent of the mesh-size $h$.

Let  $\ell\ge 0$ be an integer and $\cell \in \mesh$. Let the space of polynomials of degree at most $\ell$ in $\cell$ be denoted by $\mathbb{P}_\ell(\cell)$ and let $\mathbb{P}_\ell(\mesh)$ be the broken polynomial space.

\subsubsection{\bf Conforming FEM}\label{sec:conf}
For conforming FEM, the finite dimensional space $V_h$ is a subspace of $H^2_0(\O)$. A HD is defined by $$X_{\disc,0}=:V_h, \mbox{  and for }v_\disc\in X_{\disc,0},\,\Pi_\disc v_\disc=v_\disc,\,\nabla_\disc v_\disc=\nabla v_\disc, \,\hd v_\disc=\hessian v_\disc.$$ 
Classical $C^1$ elements that are used for the approximation the solution of fourth order elliptic problems are the Argyris and Bogner--Fox--Schmit finite elements, see \cite{ciarlet1978finite} for more details.
\begin{lemma}\cite{HDM_nonlinear}\label{th.conformingFEM}
	Let $\disc$ be a HD for the conforming FEM. Then the following hold:
	\begin{itemize}
		\item[(i)] (Coercivity) $C_\disc {\lesssim 1}$,
		\item[(ii)] (Consistency) $ \fl \varphi \in H^{3}(\O)\cap H^2_0(\O)$, $S_{\disc}(\varphi) \lesssim h\norm{\varphi}{3}$,
		\item[(iii)] (Limit-conformity) $\fl \xi \in H^2(\O;\R^{2 \times 2}), \fl \phi \in H^1(\O;\R^2),$  $$W_{\disc}(\xi)=0, \quad \hat{W}_{\disc}(\phi)=0.$$
		\item[(iv)] (Compactness) For a sequence of meshes $(\mesh_{h_m})_{m \in \N}$ with $h_m \rightarrow 0$, denoting the HD constructed on $\mesh_{h_m}$ by $\disc_m$, the sequence $(\disc_m)_{m \in \N}$ is compact.
\item[(v)] (Companion operator)  $\delta (E_\disc)=\omega(E_\disc)=0,$ and $\Gamma(E_\disc)=1.$
	\end{itemize}
\end{lemma}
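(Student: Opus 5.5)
The plan is to check the five items one at a time. Each follows from the fact that for a conforming space $V_h\subset H^2_0(\O)$ the reconstructions $\Pi_\disc,\nabla_\disc,\hd$ are the exact function, gradient and Hessian.

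\emph{Coercivity (i).} For $w_\disc\in V_h\subset H^2_0(\O)$ we have $\norm{w_\disc}{\disc}=\norm{\hessian w_\disc}{}=|w_\disc|_2$. The estimate \eqref{defn.C2} gives $\norm{w_\disc}{}\le \norm{w_\disc}{2}\le C_{2,\rm eq}|w_\disc|_2$. The Sobolev embedding $H^1(\O)\hookrightarrow L^4(\O)$ for $d\le 3$ then gives $\norm{\nabla w_\disc}{0,4}\lesssim\norm{w_\disc}{2}\lesssim |w_\disc|_2$. Both ratios in \eqref{def.CD} are therefore bounded by a constant depending only on $\O$, so $C_\disc\lesssim 1$.

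\emph{Consistency (ii).} We bound the minimum in \eqref{def.SD} by the value at $w_\disc=I_h\varphi$, the canonical interpolant of the $C^1$ element (Argyris or Bogner--Fox--Schmit, which preserves the clamped boundary conditions). Since $H^3\hookrightarrow C^1$ for $d\le 3$, the interpolant is well defined, and standard interpolation estimates on regular meshes give $\norm{\varphi-I_h\varphi}{2}\lesssim h\norm{\varphi}{3}$. This controls the $L^2$ and Hessian terms directly. The $L^4$ gradient term is controlled by the same quantity via the embedding $H^1\hookrightarrow L^4$ applied to $\nabla(\varphi-I_h\varphi)$.

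\emph{Limit-conformity (iii).} Since $w_\disc\in H^2_0(\O)$, integration by parts holds exactly with no boundary terms. For $\xi\in H^2(\O;\R^{2\times2})$ we have $\int_\O (\hessian:\xi)w_\disc=\int_\O \xi:\hessian w_\disc$. For $\phi\in H^1(\O;\R^2)$ we have $\int_\O\nabla w_\disc\cdot\phi=-\int_\O w_\disc\,\div\phi$. Hence $W_\disc(\xi)=\hat W_\disc(\phi)=0$. The identities in fact extend by density to all of $\wdspace(\O)$ and $H_{\rm div}(\O)$.

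\emph{Companion operator (v).} Take $E_\disc={\rm Id}$, which maps $V_h$ into $X$. The differences in \eqref{sup.ED.est.delta}--\eqref{sup.ED.est.omega1} vanish, and $\norm{\hessian E_\disc w_\disc}{}=\norm{w_\disc}{\disc}$, so $\Gamma(E_\disc)=1$.

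\emph{Compactness (iv).} This is the only step needing more than a direct computation. If $\norm{u_m}{\disc_m}$ is bounded, then by \eqref{defn.C2} the sequence $(u_m)$ is bounded in $H^2_0(\O)$. The Rellich embedding $H^2_0(\O)\hookrightarrow H^1_0(\O)$ is compact, and $H^1\hookrightarrow L^4$ is also compact when $d\le 3$, since $4<2^*$ for $d=3$. Hence $(u_m)$ is relatively compact in $L^2(\O)$ and $(\nabla u_m)$ is relatively compact in $L^4(\O;\R^d)$.

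The main point to watch is this compact embedding into $L^4$. It holds for $d\le3$, which matches the setting assumed for the error estimates. For $d\ge 4$ one would need a higher-order or different embedding argument.
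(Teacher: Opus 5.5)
Your overall strategy is the standard one. The paper does not reprove this lemma: it cites \cite{HDM_nonlinear}, and for (v) it only remarks in Section~\ref{sec:companion} that $E_\disc$ is the identity. Your arguments for (i), (iii), (iv) and (v) are correct. They rest on exact integration by parts against $w_\disc\in H^2_0(\O)$ with no boundary terms, on \eqref{defn.C2} together with $H^1_0(\O)\hookrightarrow L^4(\O)$, and on Rellich's theorem applied to $\nabla u_m$, which is bounded in $H^1_0(\O)^d$.

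The step that fails as written is in (ii). You justify the canonical nodal interpolant by $H^3(\O)\hookrightarrow C^1(\overline{\O})$, but both elements you name have degrees of freedom involving second derivatives at the vertices. The Argyris triangle uses all second derivatives at each vertex, and the Bogner--Fox--Schmit rectangle uses the mixed derivative $\partial_{x_1x_2}v$ at each vertex. For $\varphi\in H^3(\O)$ in two dimensions, $\hessian\varphi$ is only in $H^1(\O)$ and has no point values, so $I_h\varphi$ is not defined. Moreover, $I_h$ is not bounded on $H^3(\cell)$, so the Bramble--Hilbert argument cannot give $\norm{\varphi-I_h\varphi}{2}\lesssim h\norm{\varphi}{3}$ even for smooth $\varphi$; the classical estimates for these elements need $H^s$ with $s>3$.

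The repair is easy because $S_\disc(\varphi)$ is a minimum over $X_{\disc,0}$. You only need \emph{some} $w_\disc\in V_h$ with $\norm{\varphi-w_\disc}{2}\lesssim h\norm{\varphi}{3}$. Such a $w_\disc$ is given by a quasi-interpolation (averaged Hermite) operator into the $C^1$ space that preserves the clamped boundary conditions, for instance the construction of Girault and Scott. After that, your control of the $L^4$ gradient term through $H^1\hookrightarrow L^4$ goes through unchanged. Alternatively, your nodal argument works verbatim for a $C^1$ element whose degrees of freedom use only values and first derivatives, such as the Hsieh--Clough--Tocher element, since there $H^3\hookrightarrow C^1$ is exactly what is needed.
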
	
\subsubsection{\bf Non-conforming FEMs}\label{sec:ncfem}

Two ncFEMs in dimension $d=2$, namely the Morley FEM and the Adini FEM, fit into the framework of HDM and are discussed below.

\medskip

\textsc{(i) The Morley element \cite{ciarlet1978finite}:} 
Let $
 \llbracket \phi \rrbracket$ be the jump of $\phi$ across the edges. The nonconforming Morley element space associated with the mesh $\mesh$ is defined by
\begin{align*}
V_h & =: \bigg\{\phi \in \Poly_2(\mesh) |\phi \mbox{ is continuous at } \mathcal V_{\rm int}\mbox{ and vanishes at }\mathcal V_{\rm ext},\, \\ &\qquad\fl \edge \in \edgesint, \, \int_{\edge}^{}\bigg\llbracket\frac{\partial \phi}{\partial n}\bigg\rrbracket ds=0;\,\fl \edge \in \edgesext, \, \int_{\edge}^{}\frac{\partial \phi}{\partial n}ds=0\bigg\}.
\end{align*}
On each triangle, the local degrees of freedom are the values of the function at each vertex and the values of the normal derivatives at the midpoints of edges. 
\smallskip

\textbf{HD for the Morley triangle \cite{DS_HDM}: }Each $v_\disc\in X_{\disc,0}$  is a vector of degrees of freedom at the vertices of the mesh (with zero values at boundary vertices) and at the midpoint of the edges opposite to these vertices (with zero values at the midpoint of the boundary edges). The function $\Pi_\disc v_\disc$ is such that $(\Pi_\disc v_\disc)_{|K}\in \mathbb{P}_2(\cell)$ and $\Pi_\disc v_\disc$ (resp. its normal derivatives) takes the values at the vertices (resp.\ at the edge midpoints) dictated by $v_\disc$, $\nabla_\disc v_\disc= \nabla_{\mesh}(\Pi_\disc v_\disc)$ is the broken gradient and $\hd v_\disc= \hessian_{\mesh}(\Pi_\disc v_\disc)$ is the broken Hessian.
\medskip

\textsc{(ii) The Adini element \cite{ciarlet1978finite}:}
The Adini finite element space is the subspace of $H^1_0(\O) \cap C^0(\overline{\O})$ defined by
\begin{align*}
V_h  =: \{&v_h \in L^2(\O);\, v_h\rvert_\cell \in \mathbb{P}_\cell \,\forall \,\cell \in \mesh, v_h \mbox{ and }\nabla v_h \mbox{ are continuous }\\ &\mbox{ at the vertices $\vertices$, and vanish at the vertices in }\vertices_{\rm ext}\},
\end{align*}
where $\mathbb{P}_\cell := \Poly_3(K) \oplus \{x_1x^3_2\} \oplus\{x^3_1x_2\}$. The set of degrees of freedom in each cell are the values of function and all first order derivatives at each vertex. 

\textbf{HD for the Adini rectangle \cite{HDM_linear}: }Each $v_\disc\in X_{\disc,0}$ is a vector of three values at each vertex of the mesh (with zero values at boundary vertices), corresponding to function and gradient values, $\Pi_\disc v_\disc$ is the function such that $(\Pi_\disc v_\disc)_{|K}\in \mathbb{P}_\cell$ and its derivatives take the values at the vertices dictated by $v_\disc$, $\nabla_\disc v_\disc=\nabla(\Pi_\disc v_\disc)$ and $\hd v_\disc= \hessian_{\mesh}(\Pi_\disc v_\disc)$.

\begin{lemma}\cite{HDM_nonlinear, Morley_plate, Brenner_ncfem}\label{th.MorleyAdini}
	Let $\disc$ be a HD for the Morley (resp.\ Adini) ncFEM. Then the following hold:
	\begin{itemize}
		\item[(i)] (Coercivity) $C_\disc {\lesssim 1}$,
		\item[(ii)] (Consistency) $ \fl \varphi \in H^{3}(\O)\cap H^2_0(\O)$, $S_{\disc}(\varphi) \lesssim h\norm{\varphi}{3}$,
		\item[(iii)] (Limit-conformity) $\fl\xi \in H^2(\O;\R^{2 \times 2}), \fl \phi \in H^1(\O;\R^2),$ 
		$$
		W_{\disc}(\xi)+\hat{W}_{\disc}(\phi)\lesssim h\big( \norm{\xi}{2} + \norm{\phi}{1}\big),
		$$
		\item[(iv)] (Compactness) For a sequence of meshes $(\mesh_{h_m})_{m \in \N}$ with $h_m \rightarrow 0$, denoting the HD constructed on $\mesh_{h_m}$ by $\disc_m$, the sequence $(\disc_m)_{m \in \N}$ is compact.
\item[(v)] (Companion operator)  $\delta (E_\disc) \lesssim h^2,\, \omega(E_\disc)\lesssim h,$ and $\Gamma(E_\disc)\lesssim1.$
	\end{itemize}
\end{lemma}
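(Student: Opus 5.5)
The proof checks the five items one by one for the Morley and Adini HDs of Section~\ref{sec:ncfem}, following \cite{HDM_nonlinear,DS_HDM,HDM_linear} and the companion operators of \cite{Morley_plate,Brenner_ncfem}. Throughout, $\|\cdot\|_\disc$ is the broken $H^2$ seminorm $\|\hessian_\mesh \Pi_\disc\cdot\|$, and $\nabla_\disc$ is the broken gradient.

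\textbf{Coercivity (i).} This comes from the discrete Poincar\'e inequality for the broken Hessian. We first bound $\|\Pi_\disc w_\disc\|$ and the broken $H^1$ norm by $\|w_\disc\|_\disc$. For Morley this uses the weak continuity of normal derivatives at edge midpoints and the vanishing of vertex values; Adini is conforming in $H^1_0$, so only gradients are at stake. The $L^4$ bound on $\nabla_\disc w_\disc$ then follows from a broken Sobolev embedding $H^1(\mesh)\hookrightarrow L^4$ in $d=2$. That embedding requires controlling the jumps of $\nabla_\mesh \Pi_\disc w_\disc$, which vanish in mean on edges (Morley) or at vertices (Adini), by $h\|w_\disc\|_\disc$.

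\textbf{Consistency (ii).} We take $w_\disc$ to be the canonical interpolant $I_h\varphi$. Standard Bramble--Hilbert estimates give $\|\varphi-I_h\varphi\|+h|\varphi-I_h\varphi|_{1,h}+h^2|\varphi-I_h\varphi|_{2,h}\lesssim h^3\|\varphi\|_3$. The $L^4$ gradient term is handled elementwise via $W^{1,4}$ interpolation, or via the inverse/embedding estimate $\|\cdot\|_{0,4}\lesssim \|\cdot\|^{1/2}\|\cdot\|_{1,h}^{1/2}$. Together these give $S_\disc(\varphi)\lesssim h\|\varphi\|_3$.

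\textbf{Limit-conformity (iii).} We integrate by parts elementwise. For $W_\disc(\xi)$, the defect is the sum of edge terms $\int_{\partial K}\xi n\cdot\nabla\Pi_\disc w_\disc$ and $\int_{\partial K}(\mathrm{div}\,\xi\cdot n)\Pi_\disc w_\disc$. These are controlled by subtracting edge means or vertex interpolants and using the standard nonconforming consistency arguments: trace inequalities together with the weak continuity properties give $h\|\xi\|_2\|w_\disc\|_\disc$. For $\hat W_\disc$, the edge term $\int_\sigma \phi\cdot n\llbracket\Pi_\disc w_\disc\rrbracket$ vanishes for Adini. For Morley the jump has zero vertex values, so it is $O(h^2)$ times the broken Hessian, which yields $h\|\phi\|_1\|w_\disc\|_\disc$.

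\textbf{Compactness (iv).} Bounded $\|u_m\|_{\disc_m}$ gives a uniformly bounded broken $H^2$ norm. We apply the companion operator: $E_{\disc_m}u_m$ is bounded in $H^2_0$ and hence relatively compact in $H^1$ and in $W^{1,4}$, since $H^2\hookrightarrow W^{1,4}$ compactly in 2D. Item (v) gives $\|\Pi_{\disc_m}u_m-E_{\disc_m}u_m\|\to0$ and $\|\nabla_{\disc_m}u_m-\nabla E_{\disc_m}u_m\|\to0$. The $L^4$ version of the gradient difference follows by interpolating between $L^2$ and the broken $H^1$ bound, $\|v\|_{0,4}\lesssim\|v\|^{1/2}\|v\|_{1,h}^{1/2}$, which transfers compactness.

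\textbf{Companion operator (v).} For $E_\disc$ we use the constructions of \cite{Morley_plate,Brenner_ncfem}: averaging into a $C^1$ HCT (or Bogner--Fox--Schmit) space, plus bubble corrections enforcing edge and vertex conservation. Standard averaging estimates bound $\sum_K h_K^{2m-4}|v-E_\disc v|_{m,K}^2$ by the jump terms, which in turn are bounded by $\|v\|_\disc^2$. This gives $\delta\lesssim h^2$, $\omega\lesssim h$ and $\Gamma\lesssim1$.

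The main obstacle is the $L^4$ control of nonconforming gradients, which is needed in (i), (ii) and (iv). I would obtain it through the companion operator and the Gagliardo--Nirenberg-type interpolation described above rather than by direct analysis.
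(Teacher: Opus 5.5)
The paper gives no proof of this lemma. All five items are imported from \cite{HDM_nonlinear,Morley_plate,Brenner_ncfem}, so your sketch has to stand on its own. Most of it does: your routes for (i), (ii), (iv), (v) and for the Morley half of (iii) are the standard ones.

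\textbf{The gap: (iii) for Adini.} After elementwise integration by parts, continuity of the Adini function $w=\Pi_\disc w_\disc$ (and $w=0$ on $\partial\O$) removes the $\mathrm{div}\,\xi$ edge terms and the tangential-derivative terms. What remains in $W_\disc(\xi)$ is $\sum_\sigma\int_\sigma (n_\sigma\cdot\xi n_\sigma)\llbracket\partial_n w\rrbracket\,{\rm d}s$.

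\begin{itemize}
\item For Adini, the normal-derivative jump is a cubic along $\sigma$ that vanishes at the two endpoints but in general has nonzero mean. There is no edge-mean orthogonality as in Morley.
\item Trace and inverse inequalities give $\|\llbracket\partial_n w\rrbracket\|_{L^2(\sigma)}\lesssim h^{1/2}|w|_{2,\omega_\sigma}$, paired against $\|\xi\|_{L^2(\sigma)}\lesssim h^{-1/2}\|\xi\|_{L^2(\omega_\sigma)}+h^{1/2}|\xi|_{1,\omega_\sigma}$. This is only an $O(1)$ bound.
\item Subtracting the edge mean of $n\cdot\xi n$ handles the fluctuation. It leaves $\sum_\sigma\overline{(n_\sigma\cdot\xi n_\sigma)}\int_\sigma\llbracket\partial_n w\rrbracket\,{\rm d}s$, which is still of size $\|\xi\|\,\|w_\disc\|_\disc$.
\end{itemize}

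So ``edge means or vertex interpolants plus weak continuity'' cannot produce the factor $h$ for Adini.

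\textbf{The missing idea.} You need the Lascaux--Lesaint cancellation, which uses the rectangular geometry and the specific space $\mathbb{P}_K$.
\begin{itemize}
\item On a side of $K$ parallel to the $x_1$-axis, let $\Lambda$ be linear interpolation at the side's endpoints. Then $\partial_2 w-\Lambda(\partial_2 w)$ involves only the coefficients of $x_1^2x_2$ and $x_1^3x_2$ in $w|_K$. So it is a function $g_K(x_1)$ that is the same on the bottom and top sides of $K$.
\item $\Lambda(\partial_2 w)$ is single-valued across interior sides and zero on $\partial\O$. Hence the horizontal-side terms regroup, up to sign, as $\sum_K\int_K\partial_2\xi_{22}\,g_K\dx$.
\item The bound $\|g_K\|_{L^2(K)}\lesssim h_K^2\|\partial_1^2\partial_2 w\|_{L^2(K)}\lesssim h_K|w|_{2,K}$ then gives $O(h\|\xi\|_1\|w_\disc\|_\disc)$. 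Vertical sides are handled the same way.
\end{itemize}
Alternatively, a companion operator with elementwise Hessian orthogonality, $\int_K\hessian E_\disc w_\disc\dx=\int_K\hd w_\disc\dx$, would yield (iii) directly. That is a stronger property than (v) asserts.

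\textbf{A smaller point.} The inequality $\|v\|_{0,4}\lesssim\|v\|^{1/2}\|v\|_{1,h}^{1/2}$ you invoke in (iv) is false for general broken $H^1$ functions; take $v$ to be the indicator of one cell. Instead use the elementwise inverse estimate $\|v\|_{L^4(K)}\lesssim h_K^{-1/2}\|v\|_{L^2(K)}$ on the piecewise polynomial $v=\nabla_\disc w_\disc-\nabla E_\disc w_\disc$. Combined with the local form of the estimates behind (v), this gives $\|v\|_{0,4}\lesssim h^{1/2}\|w_\disc\|_\disc$, which settles the $L^4$ issues in both (i) and (iv).
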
	

\subsubsection{\bf Method based on GR Operators}\label{sec.grmethod}

This method \cite{BL_FEM} is attractive as the approximation space consists of continuous piecewise linear functions and the discrete Hessian is constructed by using a GR operator.
\smallskip

Let $(V_h,Q_h,I_h,\stab_h)$ be a quadruplet of a finite element space $V_h\subset H^1_0(\O)$, a projector $Q_h:L^2(\O)\to V_h$, an interpolant $I_h:{H^2_0(\O)}\to V_h$ and a function $\stab_h\in L^\infty(\O;\R^d)$ that stabilises the reconstructed Hessian such that \cite{HDM_linear}
\begin{itemize}
	\item[\propty{0}] [\emph{Strucure of $V_h$ and $I_h$}] For all $z \in V_h$, $\norm{\nabla z}{}\lesssim h^{-1}\norm{z}{}$
	and, for $\varphi \in H^2_0(\O)$, 
	$\norm{\nabla I_h \varphi-\nabla \varphi}{} \lesssim h \norm{\varphi}{2}$. 
	\item[\propty{1}] [\emph{Stability of $Q_h$}] For $\phi \in L^2(\O)$,  $\norm{Q_h \phi}{}
	\lesssim  \norm{\phi}{}.$ 
	\item[\propty{2}] [\emph{$Q_h\nabla I_h$ approximates $\nabla$}] 
	There exists $W$ densely embedded in $H^3(\Omega) \cap H^2_0(\Omega)$ such that $\norm{Q_h\nabla I_h \psi-\nabla \psi}{} \lesssim h^2\norm{\psi}{W}$ for all $\psi\in W$.
	\item[\propty{3}] [\emph{$H^1$ approximation property of $Q_h$}] For $w \in {H^2(\O)\cap H^1_0(\O)}$, 
	$\norm{\nabla Q_hw-\nabla w}{} \lesssim h\norm{w}{2}.$
	\item[\propty{4}] [\emph{Asymptotic density of $[(Q_h\nabla-\nabla)(V_h)]^\bot$}] Setting $N_h=[(Q_h\nabla-\nabla)(V_h)]^\bot$, where the orthogonality is with respect to  the $L^2(\O;\R^d)$-inner product, the following approximation property holds:
	\[
	\inf_{\mu_h\in N_h}\norm{\mu_h-\varphi}{}\lesssim h\norm{\varphi}{1}\quad \fl  \varphi \in H^1(\O;\R^d).
	\]
	\item[\propty{5}] [\emph{Stabilisation function}] $1\le |\stab_h| \lesssim 1$ and, for all $\cell \in \mesh$, 
	$$\left[\stab_{h|K}\otimes (Q_h\nabla-\nabla)(V_h(\cell))\right] \perp \nabla V_h(\cell)^d,$$
	where $V_h(\cell)=\{v_{|\cell}\,:\,v \in V_h\,,\; \cell \in \mesh\}$ and the orthogonality is understood in $L^2(K;\R^{d \times d})$ with the inner product
	induced by ``$:$''.
\end{itemize}

\textbf{HD using GR \cite{HDM_linear}: }The HD based on a quadruplet $(V_h,Q_h,I_h,\stab_h)$ satisfying \propty{0}--\propty{5}
	is defined by: $X_{\disc,0}=V_h$ and, for $u_\disc\in X_{\disc,0}$,
	\[
	\Pi_\disc u_\disc=u_\disc\,,\;\nabla_\disc u_\disc= Q_h\nabla u_\disc\;\mbox{ and }\;\hd u_\disc=\nabla (Q_h \nabla u_\disc)+\stab_h\otimes (Q_h\nabla u_\disc-\nabla u_\disc).
	\]

\begin{lemma}\cite{HDM_nonlinear, DS_HDMcontrol}\label{th.GR}
	Let $\disc$ be a HD such that $(V_h,I_h,Q_h,\stab_h)$ satisfy \propty{0}--\propty{5}. Then,
	\begin{itemize}
		\item[(i)]  (Coercivity) $C_\disc \lesssim 1$, 
		\item[(ii)](Consistency) $ \fl \varphi \in W$, $S_{\disc}(\varphi) \lesssim  h\norm{\varphi}{W}$,
		\item[(iii)] (Limit-conformity) $\fl \xi \in H^2(\O;\R^{d \times d})$ and $\fl \phi \in H_{\rm{div}}(\O)$, $$\hat{W}_{\disc}(\phi)=0, \quad W_{\disc}(\xi) \lesssim h \norm{\xi}{2}.$$
		\item[(iv)] (Compactness) If $(\mesh_m)_{m\in\N}$ is a sequence of meshes and $\disc_m$ is a GR HD based on $\mesh_m$ for discrete elements satisfying \propty{0}--\propty{5} uniformly with respect to $m$, then $(\disc_m)_{m\in\N}$ is compact.
\item[(v)] (Companion operator)  $\delta (E_\disc) \lesssim h,\, \omega(E_\disc)= 0,$ and $\Gamma(E_\disc)\lesssim1.$
	\end{itemize}
\end{lemma}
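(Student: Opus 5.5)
Lemma~\ref{th.GR} collects properties of the GR Hessian discretisation, so I would treat the five items separately and rely mostly on the cited results. Items (i)--(iv) are established in \cite{HDM_linear,HDM_nonlinear}; there the only point to revisit is the $L^4$ control of $\nabla_\disc$, which is what the present framework needs. Item (v) comes from the companion operator of \cite{DS_HDMcontrol}.

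\textbf{Coercivity (i).} I would use the following chain. First, the orthogonality \propty{5} together with $|\stab_h|\ge 1$ should give $\norm{\hd u_\disc}{}^2\gtrsim \norm{\nabla Q_h\nabla u_\disc}{}^2+\norm{Q_h\nabla u_\disc-\nabla u_\disc}{}^2$. Next, I would write $\nabla u_\disc = Q_h\nabla u_\disc + (\nabla u_\disc - Q_h\nabla u_\disc)$. Combined with the Poincar\'e inequality for $Q_h\nabla u_\disc$ (up to boundary traces, handled as in \cite{HDM_linear}), this bounds $\norm{\nabla u_\disc}{}$ and then $\norm{u_\disc}{}$ by $\norm{u_\disc}{\disc}$. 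For the $L^4$ norm, I would apply the Sobolev embedding $H^1\hookrightarrow L^4$ ($d\le 3$) to $Q_h\nabla u_\disc$, which lies in $V_h^d\subset H^1$. This gives $\norm{\nabla_\disc u_\disc}{0,4}\lesssim \norm{Q_h\nabla u_\disc}{1}\lesssim\norm{u_\disc}{\disc}$.

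\textbf{Consistency and limit-conformity (ii)--(iii).} For consistency I would choose $w_\disc=I_h\varphi$ and invoke \propty{0}, \propty{2} and \propty{3}. The $L^4$ gradient error is reduced to an $H^1$ error by the same Sobolev embedding. The identity $\hat W_\disc(\phi)=0$ fails as stated, since $\nabla_\disc=Q_h\nabla\neq\nabla$. I would therefore follow the convention of \cite{HDM_nonlinear}, where the estimate is proved with the function reconstruction paired appropriately. If that does not apply, I would instead bound the defect by $h$ using \propty{4}. For $W_\disc$ I would integrate by parts twice. The stabilisation term is made small by \propty{4} and \propty{5}, and the remaining term is controlled by \propty{3}.

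\textbf{Compactness (iv) and companion operator (v).} Compactness would follow from Rellich's theorem applied to the $H^1$-bounded families $u_\disc$ and $Q_h\nabla u_\disc$; for $d\le3$ the embedding $H^1\hookrightarrow L^4$ is compact, which gives relative compactness in $L^4$. For (v) I would take the operator $E_\disc$ from \cite{DS_HDMcontrol}. By its construction $\nabla E_\disc u_\disc=Q_h\nabla u_\disc$, which gives $\omega(E_\disc)=0$. The bound on $\Gamma$ follows from the coercivity estimate above, and $\delta(E_\disc)\lesssim h$ follows from a Poincar\'e argument on the gradient difference.

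I expect the main obstacle to be the $L^4$ and $H^1$ stability of $Q_h\nabla u_\disc$ uniformly in $h$. This is also where the two delicate points of items (iii) and (v) meet: the vanishing of $\hat W_\disc$, and the existence of an $E_\disc$ with gradient exactly $Q_h\nabla u_\disc$, which is needed for $\omega(E_\disc)=0$. For both I would lean directly on \cite{DS_HDMcontrol} rather than reprove them.
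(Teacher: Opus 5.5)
Your sketches of (i), (ii) and (iv) are sound; the paper itself gives no argument here beyond citing \cite{HDM_nonlinear,DS_HDMcontrol}.

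On (iii) you are right that the statement cannot hold as written. Since $\Pi_\disc w=w\in H^1_0(\O)$, one has $\hat{W}_\disc(\phi)=\max_{w}\|w\|_\disc^{-1}\big|\int_\O (Q_h\nabla w-\nabla w)\cdot\phi\dx\big|$, and this is not identically zero because $Q_h\nabla w$ is continuous while $\nabla w$ is not. Drop the appeal to an unspecified ``convention'' and keep your fallback, which is the correct argument:
\begin{itemize}
\item the orthogonality in \propty{5} together with $|\stab_h|\ge 1$ gives $\|Q_h\nabla w-\nabla w\|\le\|w\|_\disc$;
\item \propty{4} then yields $\hat{W}_\disc(\phi)\lesssim h\|\phi\|_1$ for $\phi\in H^1(\O;\R^d)$;
\item density in $L^2$ then gives $\hat{W}_{\disc_m}(\phi)\to 0$ for every $\phi\in H_{\rm div}(\O)$.
\end{itemize}
That is all Definition~\ref{def:coercive}$(iii)$ and the bound on $\widetilde W_\disc$ require. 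For $W_\disc$, one integration by parts per term suffices, since both $w$ and $Q_h\nabla w$ lie in $H^1_0$. The defect equals $\int_\O (Q_h\nabla w-\nabla w)\cdot\mathrm{div}\,\xi\dx$, handled by \propty{4}, minus $\int_\O \xi:(\stab_h\otimes(Q_h\nabla w-\nabla w))\dx$, handled by the cellwise orthogonality in \propty{5} against piecewise constant approximations of $\xi$. \propty{3} plays no role there.

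The genuine gap is in (v). Your $\omega(E_\disc)=0$, and your bound on $\Gamma(E_\disc)$, rest on the identity $\nabla E_\disc u_\disc=Q_h\nabla u_\disc$. No map into $H^2_0(\O)$ can satisfy it. A function in $H^2_0(\O)$ whose gradient is a continuous piecewise linear field is a $C^1$ piecewise quadratic with clamped boundary conditions. In particular, $Q_h\nabla u_\disc$ would have to be curl-free on every cell, which an averaged gradient is not in general.

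Take, for instance, a uniform three-directional triangulation of a square. There the $C^1$ quadratic splines are spanned by the quadratics and the truncated powers $(\ell)_+^2$ of the mesh lines, and the clamped conditions force every coefficient to vanish. The identity would then give $Q_h\nabla u_\disc=0$ for all $u_\disc\in V_h$, contradicting \propty{2} as $h\to 0$.

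Hence $\omega(E_\disc)=0$ is not attainable for this HD, and deferring to \cite{DS_HDMcontrol} does not close the step. What the analysis actually uses is $\omega(E_{\disc_m})\to 0$ in Theorem~\ref{convergence_abstract_non-linear}, or $\omega(E_\disc)\le\rho$ in Theorem~\ref{thm.err}. That has to be proved for an explicit construction, e.g.\ an HCT-type averaging built from the vertex values of $u_\disc$ and of $Q_h\nabla u_\disc$.

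Relatedly, a Poincar\'e inequality on the gradient difference only gives $\delta(E_\disc)\lesssim 1$, because $\|\nabla u_\disc-Q_h\nabla u_\disc\|$ is merely bounded by $\|u_\disc\|_\disc$. The factor $h$ requires an elementwise argument, such as having $E_\disc u_\disc$ match $u_\disc$ at the vertices and then applying cellwise Poincar\'e estimates.
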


\subsection{Applications}\label{sec:applications}
This section establishes that the abstract formulation \eqref{abstract_weak} is applicable to the stream function-vorticity form of the 2D incompressible Navier–Stokes equations for $k=1$ and von K\'{a}rm\'{a}n equations for $k=2$.

\subsubsection{Navier--Stokes problem \cite{Lions_NS,BRR}:} For $f \in H^{-s}(\O)$ with $s \in \{1,2\}$, let $u$ solve
\begin{subequations}\label{NS_eq}
\begin{align} 
\Delta^2u + \frac{\partial}{\partial x_1}\bigg((-\Delta u)\frac{\partial u}{\partial x_2}\bigg)&- \frac{\partial}{\partial x_2}\bigg((-\Delta u)\frac{\partial u}{\partial x_1}\bigg)=f \mbox{ in } \O\\
&u=\frac{\partial u}{\partial n}=0 \mbox{ on } \partial\O.
\end{align}
\end{subequations}
 The weak formulation to \eqref{NS_eq} seeks $u \in X$ such that
\be\label{NS_weak}
\mathcal{A}(\hessian u,\hessian v)+\mathcal{B}(\hessian u, \nabla u, \nabla v)=\mathcal{L}(v) \fl  v \in X,
\ee
where for all $\xi,$ $\chi \in L^2(\O;\sym2)$ and $\phi,$ $ \theta \in L^2(\O,\R^2),$
$$\mathcal{A}(\xi,\chi)= \int_{\O}^{}\xi: \chi \d \x, \quad \mathcal{B}(\xi, \phi,\theta)=\int_{\O}^{}\mbox{tr}(\xi) \phi\cdot \mbox{rot}_{\pi/2}(\theta)\d\x,\quad\mathcal{L}(v)=\langle f,v\rangle _{-s,s}. $$
Note that $\mbox{tr}(\xi)$ means the trace of the matrix $\xi$ and, for $\theta=(\theta_1,\theta_2)$,
$\mbox{rot}_{\pi/2}(\theta)=\big(-\theta_2, \theta_1\big)^t.$ If $f \in L^2(\Omega)$, then the linear functional $\cL(v)$ is defined by the integral $\displaystyle \int_{\O}fv d\x$.

\medskip

It is straightforward to verify that $\mathcal{A}(\cdot,\cdot)$, $\mathcal{B}(\cdot,\cdot,\cdot)$, and $\mathcal{L}(\cdot)$ satisfy Assumptions $\assum{1}$-$\assum{4}$ for $k=1$. In particular, the continuity of the trilinear form $\mathcal{B}(\cdot,\cdot,\cdot)$ is a direct consequence of the generalized Hölder's inequality with $\|\cB\|=1$.

\subsubsection{The von K\'{a}rm\'{a}n equations \cite{ciarlet_plate}: } Given $f \in H^{-s}(\Omega)$ where $\Omega \subsetneq \R^2$, seek the vertical displacement  $u$ and the  Airy stress function $v$ such that  
\begin{subequations}\label{vk_eq}
	\begin{align}%
	\Delta^2u&=[u,v]+f \mbox{ in }\O, \label{vk_eq1}\\
	\Delta^2v&=-\frac{1}{2}[u,u] \mbox{ in }\O,\label{vk_eq2}
	\end{align}
\end{subequations}
with clamped boundary conditions
\be \label{clamped_bc}
u=\frac{\partial u}{\partial n}=v=\frac{\partial v}{\partial n}=0 \mbox{ on } \partial\O.
\ee
The von K\'{a}rm\'{a}n bracket $[\cdot,\cdot]$ is defined by
$$[\xi,\chi]=\xi_{xx}\chi_{yy}+\xi_{yy}\chi_{xx}-2\xi_{xy}\chi_{xy}=\mbox{cof}(\hessian\xi):\hessian\chi,$$ where $\mbox{cof}(\hessian \xi)$ denotes the co-factor matrix of $\hessian \xi$. Then a weak formulation corresponding to \eqref{vk_eq}-\eqref{clamped_bc} seeks $u, v \in H^2_0(\O)$ such that
\begin{subequations}\label{vk_weak}
	\begin{align}
	a(u,\phi_1)+2b(u,\phi_1,v)&=(f,\phi_1)_{-s,s} \fl \phi_1 \in X, \label{vk_weak1} \\
	2a(v,\phi_2)-2b(u,u,\phi_2)&=0 \fl  \phi_2 \in X, \label{vk_weak2}
	\end{align}
\end{subequations}
where for all $\xi, \chi, \phi \in X,$
\[
\begin{aligned}
&a(\xi, \chi):=\int_\O \hessian \xi : \hessian \chi\d\x,\quad b(\xi,\chi,\phi):=\frac{1}{2}\int_\O\mbox{cof}(\hessian \xi)\nabla\chi \cdot\nabla \phi\d\x=-\frac{1}{2}\int_\O[\xi,\chi]\phi\d\x.
\end{aligned}
\]
Note that $b(\cdot,\cdot,\cdot)$ is derived using the divergence-free rows property \cite{evans_pde} and is symmetric with respect to all variables.  

\medskip

By summing \eqref{vk_weak1} and \eqref{vk_weak2}, we arrive at an equivalent vector-valued formulation as defined in \eqref{abstract_weak} with $k=2$. This formulation seeks a solution $\Psi=(u,v) \in \mathbf{X}$ such that 
\be  \label{vk_weak_vector}
\mathcal{A}(\hessian \Psi,\hessian \Phi)+\mathcal{B}(\hessian \Psi,\nabla \Psi, \nabla \Phi)=\mathcal{L}(\Phi) \fl \Phi \in \bfX,
\ee
where for all $\Phi=(\phi_1,\phi_2)$, $\Lambda=(\lambda_1,\lambda_2),\,\Gamma=(\gamma_1,\gamma_2),\, \Theta=(\theta_1,\theta_2)$ and $\Xi=(\xi_1, \xi_2)$
with $\Lambda, \Gamma \in 
\bsymb{L}^2(\O;\sym2)$ and $\Xi,$ $\Theta \in \bsymb{L}^2(\O,\R^2),$
	\begin{align*}
	\mathcal{A}(\Lambda, \Gamma):&=\int_\O \lambda_1 :\gamma_1\d\x +2\int_\O  \lambda_2 : \gamma_2\d\x, \\
	\mathcal{B}(\Lambda,\Xi,\Theta):&=\int_\O\mbox{cof}(\lambda_1)\theta_1\cdot\xi_2\d\x- \int_\O\mbox{cof}(\lambda_1)\xi_1 \cdot\theta_2\d\x,\mbox{ and} \\
	\mathcal{L}(\Phi):&=\langle f,\phi_1\rangle_{-s,s}.
	\end{align*}
 This example satisfies Assumptions \assum{1}–\assum{4} directly.
 \begin{remark}\label{remark.vk.wf}
The more commonly used equivalent weak formulation of the von K\'{a}rm\'{a}n model \cite{Brezzi,ng1,ng2} \eqref{vk_eq}  seeks $(u,v) \in \bfX$ such that 
	\begin{align*}
	a(u,\phi_1)+2b(u,v,\phi_1)&=\langle f,\phi_1\rangle_{-s,s} \fl \phi_1 \in X,\\
	a(v,\phi_2)-b(u,u,\phi_2)&=0 \fl \phi_2 \in X .
	\end{align*}
An advantage of \eqref{vk_weak} adopted in \cite{HDM_nonlinear} is that it ensures \assum{3} and hence an a priori bound follows easily without any integration by parts, both at the continuous level and the discrete level.
\end{remark}
\begin{remark}[Order of convergence for NS and vK equations]\label{rem.order}
Let $\O$ be a convex domain and let $u\in X\cap H^3(\Omega)$ solves \eqref{NS_weak} and $u_\disc \in X_{\disc,0}$ solves the corresponding \eqref{abstract_HS}. A combination of Theorem~\ref{thm.err} together with Lemmas~\ref{th.conformingFEM},~\ref{th.MorleyAdini} and~\ref{th.GR} shows the convergence of conforming FEMs, Morley and Adini ncFEMs, and GR methods for NS equations. These methods yields $\mathcal{O}(h)$ convergence rate in $L^2$, broken $H^1$, and broken $H^2$ norms. A similar argument applied to the von K\'{a}rm\'{a}n equations \eqref{vk_weak_vector} yields a linear order of convergence for the same classes of methods.
\end{remark}
\section{Numerical results}\label{sec:numericalresults}

This section presents numerical results for the 2D incompressible NS equation in the stream-function-vorticity formulation and the vK equations using the GR method, Morley ncFEM, and Adini ncFEM. The convergence results for the relative errors in $L^2(\Omega)$, broken $W^{1,4}(\Omega)$, and broken $H^2(\Omega)$ norms for the examples considered in this section have been previously documented in \cite[Section 6]{HDM_nonlinear}; consequently, they are omitted here to avoid redundancy. 
To quantify accuracy, define the relative error in broken $H^1(\O)$ norm as follows:
\begin{align*}
&\err_\disc(\nabla u) :=\frac{\norm{\nabla_\disc u_\disc -\nabla u}{}}{\norm{\nabla u}{}}, 
\end{align*}
where $u$ is the continuous solution and $u_\disc$ is the corresponding HS solution. 

\medskip

In the tables, $h$ denotes the mesh size. The model problem is constructed in such a way that the exact solution is known. The discrete problem is solved using Newton's method, implemented following the approach of \cite{ng2}. The initial guess is chosen to be the solution of the biharmonic part of the corresponding problem, as, e.g., in \cite[Section 7]{CCGMNN18}. In all the examples considered in this paper, Newton's method converges within four iterations. In these cases, the error between the final iterate and the previous iterate is observed to be less than $10^{-9}$. The uniform mesh refinement has been performed using red-refinement criteria. 

\subsection{Navier-Stokes equations}
 
 Consider the unit square domain $\Omega=(0,1)^2$ with the exact solution defined as $u=x^2y^2(1-x)^2(1-y)^2$. The load function $f$ is obtained from
$$\Delta^2 u + \frac{\partial}{\partial x}\bigg((-\Delta u)\frac{\partial  u}{\partial y}\bigg)- \frac{\partial}{\partial y}\bigg((-\Delta u)\frac{\partial u}{\partial x}\bigg)=f.$$

\begin{table}[h!!]
\caption{\small{GR, Morley, and Adini methods, NS equations, convergence results}}
{\small{
\begin{center}
\begin{tabular}{||c|c|c||c|c|c||c|c|c||}
\hline
\multicolumn{3}{||c||}{GR NS} 
& \multicolumn{3}{c||}{Morley NS} 
& \multicolumn{3}{c||}{Adini NS} \\
\hline
$h$ & $\err_\disc(\nabla u)$ & Order 
& $h$ & $\err_\disc(\nabla u)$ & Order
& $h$ & $\err_\disc(\nabla u)$ & Order \\
\hline

0.35355 & 0.567673 & -      
& 1.00000 & 3.565604 & -      
& 0.70711  & 0.202375 & - \\

0.17678 & 0.167145 & 1.7640 
& 0.50000 &1.145805 & 1.6378 
& 0.35355 & 0.092087 & 1.1360 \\

0.08839 & 0.049952 & 1.7425 
& 0.25000 & 0.331570 & 1.7890
& 0.17678 & 0.024378 & 1.9174 \\

0.04419 & 0.013806 & 1.8552 
& 0.12500 &  0.092576 & 1.8406 
& 0.08839 & 0.006146 & 1.9878 \\

0.02210 & 0.003646 & 1.9209 
& 0.06250 &0.024007 & 1.9472
& 0.04419 & 0.001540 & 1.9971 \\

0.01105 & 0.000939 & 1.9575 
& 0.03125 &  0.006062 & 1.9855
&  0.02210& 0.000385 & 1.9993 \\

\hline
\end{tabular}
\end{center}
}}
\label{table.ns_combined}
\end{table}
The errors and orders of convergence for the numerical approximation of $u$ in the broken $H^1$ norm for the GR, Morley, and Adini methods are reported in Table \ref{table.ns_combined}. As observed from the table, all three methods exhibit a quadratic rate of convergence in the $H^1$ norm, whereas the theoretical estimates in Theorem~\ref{thm.err} and Remark~\ref{rem.order} predict only a linear rate of convergence.

\subsection{The von K\'{a}rm\'{a}n equations }

Two domains are considered in this section: a square domain, which is convex, and an L-shaped domain, which is non-convex.

\subsubsection{Square Domain} Choose $u=v=x^2y^2(1-x)^2(1-y)^2$ as the exact solutions on $\Omega=(0,1)^2$. The corresponding load functions are determined by substituting these solutions into the governing equations:  $$f=\Delta^2 u-[u,v]\; \mbox{ and }\;g=\Delta^2 v+\frac{1}{2}[u, u].$$

Tables \ref{table.vke_combined_u}-\ref{table.vke_combined_v} present the relative errors and orders of convergence for the variables $u$ and $v$. For the GR method, quadratic convergence is obtained for $v$, whereas a lower order is observed for $u$. Such behavior was also reported for $W^{1,4}(\Omega)$ in \cite{HDM_nonlinear}. Results for Morley and Adini ncFEM are similar to those of the NS equations. Overall, the numerical convergence rates exceed those predicted by the theoretical analysis.

\begin{table}[h!!]
\caption{\small{GR, Morley, and Adini methods, vK equations, convergence results for $u$, square domain}}
{\small{
\begin{center}
\begin{tabular}{||c|c|c||c|c|c||c|c|c||}
\hline
\multicolumn{3}{||c||}{GR vKE} 
& \multicolumn{3}{c||}{Morley vKE} 
& \multicolumn{3}{c||}{Adini vKE} \\
\hline
$h$ & $\err_\disc(\nabla u)$ & Order 
& $h$ & $\err_\disc(\nabla u)$ & Order 
& $h$ & $\err_\disc(\nabla u)$ & Order \\
\hline
0.35355& 0.567835 & -      
& 1.00000 & 3.564432 & -      
& 0.70711 & 0.202649 & - \\

0.17678 & 0.167636 & 1.7601 
& 0.50000 & 1.145871 & 1.6372 
& 0.35355 & 0.092193 & 1.1363 \\

0.08839& 0.050446 & 1.7325 
& 0.25000 & 0.331537 & 1.7892 
& 0.17678 & 0.024409 & 1.9173 \\

0.04419 & 0.014295 & 1.8192 
& 0.12500 & 0.092576 & 1.8405 
& 0.08839  & 0.006154 & 1.9878 \\

0.02210 & 0.004146 & 1.7858 
& 0.06250 & 0.024008 & 1.9471 
& 0.04419 & 0.001542 & 1.9971 \\

0.01105 & 0.001483 & 1.4828 
& 0.03125 & 0.006062 & 1.9855 
&  0.02210& 0.000386& 1.9993  \\
\hline
\end{tabular}
\end{center}
}}
\label{table.vke_combined_u}
\end{table}
\begin{table}[h!!]
\caption{\small{GR, Morley, and Adini methods, vK equations, convergence results for $v$, square domain}}
{\small{
\begin{center}
\begin{tabular}{||c|c|c||c|c|c||c|c|c||}
\hline
\multicolumn{3}{||c||}{GR vKE}
& \multicolumn{3}{c||}{Morley vKE}
& \multicolumn{3}{c||}{Adini vKE} \\
\hline
$h$ & $\err_\disc(\nabla v)$ & Order 
& $h$ & $\err_\disc(\nabla v)$ & Order 
& $h$ & $\err_\disc(\nabla v)$ & Order \\
\hline
0.35355 & 0.567587 & -&1.00000 & 3.566189 & -&0.70711 & 0.202238 & 2.3059 \\
0.17678 & 0.166900 & 1.7659 & 0.50000 & 1.145773 & 1.6381 & 0.35355 & 0.092034 & 1.1358 \\
0.08839 & 0.049707 & 1.7475 & 0.25000 & 0.331586 & 1.7889 & 0.17678 & 0.024363 & 1.9175 \\
0.04419 & 0.013567 & 1.8733 & 0.12500 & 0.092575 & 1.8407 & 0.08839 & 0.006142 & 1.9878 \\
0.02210 & 0.003417 & 1.9894 & 0.06250 & 0.024007 & 1.9472 & 0.04419 & 0.001539 & 1.9971 \\
0.01105&0.000742 & 2.2037& 0.03125 & 0.006062 & 1.9855 &0.02210 & 0.000385 & 1.9993 \\
\hline
\end{tabular}
\end{center}
}}
\label{table.vke_combined_v}
\end{table}
\subsubsection{L-shaped domain}

Consider the L-shaped domain $\Omega=(-1,1)^2 \setminus\big{(}[0,1)\times(-1,0]\big{)}$. Let the exact singular solution \cite{Grisvard92} in polar coordinates be	\begin{align}\label{eqn.uLshaped}
						u=v=(r^2 \cos^2\theta-1)^2 (r^2 \sin^2\theta-1)^2 r^{1+ \gamma}g_{\gamma,\omega}(\theta),
						\end{align}
						where $ \gamma\approx 0.5444837367$ is a non-characteristic 
						root of $\sin^2( \gamma\omega) =  \gamma^2\sin^2(\omega)$, $\displaystyle \omega=\frac{3\pi}{2}$,  and
				\begin{align*}
g_{\gamma,\omega}(\theta)&=\left(\frac{1}{\gamma-1}\sin ((\gamma-1)\omega\right)-\frac{1}{ \gamma+1}\sin(( \gamma+1)\omega))(\cos(( \gamma-1)\theta)-\cos(( \gamma+1)\theta))\\
					&\quad 	-\left(\frac{1}{\gamma-1}\sin(( \gamma-1)\theta \right)-\frac{1}{ \gamma+1}\sin(( \gamma+1)\theta))
						(\cos(( \gamma-1)\omega)-\cos(( \gamma+1)\omega)).
\end{align*}

\begin{table}[h!!]
\caption{\small{Morley and Adini methods, vK equations, convergence results for $u$, L-shaped domain}}
{\small{
\begin{center}
\begin{tabular}{||c|c|c||c|c|c||}
\hline
\multicolumn{3}{||c||}{Morley vKE} 
& \multicolumn{3}{c||}{Adini vKE} \\
\hline
$h$ & $\err_\disc(\nabla u)$ & Order 
& $h$ & $\err_\disc(\nabla u)$ & Order \\
\hline
0.70711 & 1.985957 & -      
& 0.70711 & 0.230581 & - \\

0.35355& 0.623930 & 1.6704 
& 0.35355 & 0.051208 & 2.1708 \\

0.17678 & 0.181811 & 1.7789 
& 0.17678 & 0.012876 & 1.9917 \\

0.08839 & 0.053249 & 1.7716 
& 0.08839 & 0.003761 & 1.7755 \\

0.04419 & 0.017351 & 1.6178 
& 0.04419 & 0.001332 & 1.4978 \\

0.02210& 0.006560 & 1.4033 
& 0.02210 & 0.000558 & 1.2557 \\
\hline
\end{tabular}
\end{center}
}}
\label{table.vke_lshape_combined_u}
\end{table}
\begin{table}[h!!]
\caption{\small{Morley and Adini methods, vK equations, convergence results for $v$, L-shaped domain}}
{\small{
\begin{center}
\begin{tabular}{||c|c|c||c|c|c||}
\hline
\multicolumn{3}{||c||}{Morley vKE}
& \multicolumn{3}{c||}{Adini vKE} \\
\hline
$h$ & $\err_\disc(\nabla v)$ & Order 
& $h$ & $\err_\disc(\nabla v)$ & Order \\
\hline
0.70711 & 1.293881 & -&0.70711 & 0.190722 & - \\
0.35355 & 0.569137 & 1.1849 &0.35355 & 0.044289 & 2.1064 \\
0.17678 & 0.167686 & 1.7630 &0.17678 & 0.011837 & 1.9037 \\
0.08839 & 0.047896 & 1.8078 & 0.08839 & 0.003683 & 1.6842 \\
0.04419 & 0.015209 & 1.6550 &0.04419 & 0.001328 & 1.4718 \\
0.02210&0.005694&1.4175&0.02210 & 0.000541& 1.2945\\
\hline
\end{tabular}
\end{center}
}}
\label{table.vke_lshape_combined_v}
\end{table}

The errors and rates of convergence are reported in Tables~\ref{table.vke_lshape_combined_u}-\ref{table.vke_lshape_combined_v}, respectively. 
This example is particularly interesting, as the solution exhibits reduced regularity due to 
the corner singularity. Since the domain $\Omega$ is nonconvex, only suboptimal orders of 
convergence in the $H^1$ norm are expected, which is clearly reflected in the table.

\begin{remark}[$L^2$, $H^1$, and $H^2$ convergence results]
The rate of convergence in the energy norm for the above examples is linear, as reported in \cite{HDM_nonlinear}, and is consistent with the predictions of Theorem~\ref{thm.err} and Remark~\ref{rem.order} for the square domain. However, numerical results indicate that the convergence rates in the $L^2$ \cite{HDM_nonlinear} and $H^1$ norms are higher than those predicted by the energy estimate. Therefore, deriving improved $L^2$ and $H^1$ error estimates within the HDM framework remains a topic for future work.
\end{remark}

\section{Conclusion}\label{sec:conclusion}
A unified framework using the HDM for fourth-order semilinear elliptic equations with trilinear nonlinearity and general source, which encompasses conforming FEMs, nonconforming FEMs, and GR methods, is considered in this article. Based on four properties of the HDM and a $C^1$-conforming companion operator, the well-posedness of discrete solutions for general data in $H^{-2}(\Omega)$ is demonstrated, and a robust convergence analysis for rough data in $H^{-1}(\Omega)$ is established without extra-regularity assumptions. Moreover, rigorous error estimates in $L^2$, $H^1$, and $H^2$-like norms on convex domains are derived, and the reliability of Newton’s method for the discrete problem is confirmed. The practical utility of this framework is further validated through application to the stream function vorticity formulation of the 2D incompressible Navier–Stokes and von Kármán equations, where numerical results across various methods, such as the GR method and Adini and Morley ncFEMs, consistently confirm the theoretical estimates.


\subsection*{Acknowledgments}

The author gratefully acknowledges Prof. Neela Nataraj and Prof. J\'erome Droniou for their valuable guidance and insightful suggestions in deriving the error estimates.
\bibliographystyle{abbrv}
\bibliography{Fourth_order_elliptic}

\end{document}